\newcommand{\dom}{\operatorname{dom}}
\newcommand{\comment}[1]{}
\newcommand{\Ht}{\mathrm{ht}}
\newcommand{\CH}{\mathrm{CH}}
\newcommand{\MA}{\mathrm{MA}}
\newcommand{\Seq}[1]{\langle #1 \rangle}
\newcommand{\ZFC}{\mathrm{ZFC}}
\newcommand{\range}{\mathrm{range}}
\newcommand{\rest}{\upharpoonright}
\newcommand{\Acal}{\mathcal{A}}
\newcommand{\Bcal}{\mathcal{B}}
\theoremstyle{plain}
\newtheorem{thm}{Theorem}[section]
\newtheorem{lem}[thm]{Lemma}
\newtheorem{prop}[thm]{Proposition}
\newtheorem{cor}[thm]{Corollary}
\newtheorem{fact}[thm]{Fact}
\newtheorem{claim}[thm]{Claim}
\newtheorem{question}[thm]{Question}
\theoremstyle{definition}
\newtheorem{defn}[thm]{Definition}
\newtheorem{obs}[thm]{Observation}
\begin{document}

\author[H. Lamei Ramandi, S. Todorcevic]{Hossein Lamei Ramandi and Stevo Todorcevic}
\address{Institute f\"{u}r Mathematische Logik und 
Grundlagenforschung  \\
Westf\"{a}lische Wilhelms-Universit\"{a}t 
M\"{u}nster,  Germany}
\email{hlamaira@exchange.wwu.de}

\address{Department of Mathematics \\ University of Toronto,
Toronto \\ Canada}
\email{{\tt hossein@math.toronto.edu}}

\address{Department of Mathematics \\ University of Toronto,
Toronto \\ Canada}
\email{{\tt stevo@math.toronto.edu}}

\address{ Institut de Math\'ematiques de Jussieu, Paris, France}
\email{{\tt stevo.todorcevic@imj-prg.fr}}

\title[can You Take Komjath's Inaccessible Away?]{can You Take Komjath's Inaccessible Away?}

\subjclass[]{}
\keywords{Aronszajn trees, Kurepa trees, Walks on ordinals, inaccessible cardinals,}

\begin{abstract} 
In this paper we aim to compare Kurepa trees and Aronszajn trees. 
Moreover, we analyze the effect of large cardinal assumptions on this comparison.
Using the the method of walks on ordinals, we will show 
it is consistent with $\ZFC$ that there is a Kurepa tree and 
every Kurepa tree contains an Aronszajn subtree, if there is an inaccessible cardinal.
This is stronger than Komjath's theorem in \cite{Komjath_Aronszajn_Kurepa},
where he proves the same consistency from two inaccessible cardinals.
Moreover, we prove it is consistent with $\ZFC$ that
there is a Kurepa tree $T$ such that if $U \subset T$
is a Kurepa tree with the inherited order from $T$,
then $U$ has an Aronszajn subtree.
This theorem uses no large cardinal assumption.
Our last theorem immediately implies the following:
If $\MA_{\omega_2}$ holds and $\omega_2$ is not a Mahlo cardinal in $\textsc{L}$
then there is a Kurepa tree with the property that every Kurepa subset has an
Aronszajn subtree.
Our work entails proving a new lemma about Todorcevic's $\rho$ function
which might be useful in other contexts.
\end{abstract}

\maketitle

\section{Introduction}
In this paper we aim to compare Kurepa trees and Aronszajn trees. 
Moreover, we analyze the effect of large cardinal assumptions on this comparison. 
We are interested in the question that to what extent do Kurepa trees contain Aronszajn subtrees.
The first result regarding this question is due to Jensen. 
He showed that there is a Kurepa tree in the constructible universe $\textsc{L}$, 
which has no Aronszajn subtrees.
Todorcevic showed that there is a countably closed forcing which
adds a Kurepa tree with no Aronszajn subtree.
Both Jensen's and Todorcevic's  results are in the negative direction. 
In the positive direction, Komjath proved the following theorem.
\begin{thm}\label{Komjath}\cite{Komjath_Aronszajn_Kurepa}
It is consistent relative to the existence of two inaccessible cardinals that there is a Kurepa tree and 
every Kurepa tree has an Aronszajn subtree.
\end{thm}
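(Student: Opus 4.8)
The plan is to produce the model by forcing over a ground model $V$ of $\ZFC$ (and, harmlessly, $\mathrm{GCH}$) that carries two inaccessible cardinals $\kappa_0<\kappa_1$, in three steps: a L\'evy collapse turning $\kappa_0$ into $\omega_1$, a L\'evy collapse turning $\kappa_1$ into $\omega_2$, and finally the canonical countably closed forcing that generically adds a Kurepa tree. Concretely, set $\Cbf_0=\mathrm{Coll}(\omega,{<}\kappa_0)$, let $\Cbf_1$ be the $\Cbf_0$-name for $\mathrm{Coll}(\omega_1,{<}\kappa_1)$, put $V_1=V[\Cbf_0\ast\Cbf_1]$, and let $\mathbb{K}\in V_1$ be the forcing whose conditions are pairs $(t,f)$, where $t$ is a tree of countable successor height with countable levels and $f$ is a countable partial map from $\omega_2$ into the top level of $t$, ordered by coordinatewise end-extension; its generic yields a tree $T^{*}$, and the final model is $W:=V_1[\mathbb{K}]$.

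The routine part is the bookkeeping. The forcing $\Cbf_0$ is $\kappa_0$-c.c., collapses every infinite cardinal below $\kappa_0$ to $\omega$, and forces $\CH$; over $V[\Cbf_0]$ the forcing $\Cbf_1$ is countably closed and $\kappa_1$-c.c.\ and collapses everything in $[\kappa_0,\kappa_1)$ to $\omega_1$. Hence in $V_1$ we have $\omega_1=\kappa_0$, $\omega_2=\kappa_1$, and $\CH$ (no reals are added after the first step); moreover $\kappa_0$ and $\kappa_1$ remain inaccessible in $\textsc{L}$, and in fact $\omega_2$ is inaccessible in $\textsc{L}[A]$ for every $A\subseteq\omega_1$ of $V_1$, since each such $A$ appears in a small intermediate extension --- so by a theorem of Silver there are no Kurepa trees in $V_1$. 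Finally $\mathbb{K}$ is countably closed, hence adds no reals and preserves $\omega_1$, and under $\CH$ a $\Delta$-system argument gives it the $\aleph_2$-c.c., so all cardinals and $\CH$ are preserved; standard density arguments show that $T^{*}$ is a Kurepa tree in $W$. This settles the first half of the conclusion.

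The substance is to show that every Kurepa tree $S\in W$ has an Aronszajn subtree. If some node of $S$ lies on no uncountable branch then the cone of $S$ above it is an Aronszajn subtree, so I may assume $S=\bigcup[S]$ and $|[S]|\ge\aleph_2$. I would fix, using the $\aleph_2$-c.c.\ of $\mathbb{K}$ together with $\CH$, a nice name $\dot S$ for $S$ together with names for $\aleph_2$ of its branches, and then, using the $\kappa_1$-c.c.\ of $\Cbf_0\ast\Cbf_1$, pull everything back to a hereditarily $\kappa_1$-sized name over the small model $V$. I would then run a reflection argument: using that $\omega_1=\kappa_0$ is inaccessible in $\textsc{L}$, build a club $E\subseteq\omega_1$ and a continuous $\in$-chain $\Seq{M_\delta:\delta\in E}$ of elementary submodels of a large $H(\theta)^{W}$ with $M_\delta\cap\omega_1=\delta$, each $M_\delta$ correctly computing $S\rest\delta$ and the relevant fragment of $\dot S$, and with the restriction of the $\Cbf_0$-generic below $\delta$ generic over $M_\delta\cap V$. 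Recursing along $E$, one would select nodes $s_\delta\in S_\delta$ extending the nodes chosen so far and avoiding $b\rest\delta$ for every branch $b$ already named, and close off to a subtree $S'\subseteq S$ of height $\omega_1$; the point is that the genericity of $\Cbf_1$ and $\mathbb{K}$ over the chain forces every uncountable branch of $S$ to be named before the stage at which it could enter $S'$, so that no branch of $S$ is cofinal through $S'$ and $S'$ is Aronszajn.

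The hard part is exactly this last construction, and it is where the two inaccessibles are spent. A Kurepa tree has countable levels but at least $\aleph_2$ cofinal branches, so at a limit level only fewer than $\aleph_1$ of the branches can have been individually diagonalized against, $\aleph_2$ of them survive, and countably many already pass through each node of that level; thus a naive level-by-level construction cannot produce an Aronszajn subtree --- and indeed Jensen's Kurepa tree in $\textsc{L}$ has none. What makes the argument go through is that in $W$ every Kurepa tree, and every $\aleph_2$-sized family of its branches, is captured by a name over the small ground model $V$, so the genericity of the two L\'evy collapses over the reflecting chain supplies a single ``generic'' scheme of levels and node-choices that defeats all $\aleph_2$ branches simultaneously. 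This is precisely the step at which the methods of the present paper --- walks on ordinals and Todorcevic's $\rho$ function --- yield an improvement: they let one carry out the diagonalization along $\omega_1$ directly, dispensing with the collapse at $\omega_1$ and hence with one of the two inaccessibles.
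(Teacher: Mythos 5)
The paper never proves Theorem \ref{Komjath}: it is quoted from Komj\'ath's article, and the body of the paper is devoted to the stronger Theorem \ref{main}, which replaces the two inaccessibles by one. So your proposal can only be judged against Komj\'ath's actual argument and against the machinery this paper builds to improve it; measured that way, the overall architecture (two L\'evy collapses followed by a forcing that adds a Kurepa tree, then a reflection argument over the collapses) is the right general shape, but the proposal has a concrete, fatal gap at the third step.

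The gap is the choice of $\mathbb{K}$. The poset of pairs $(t,f)$ ordered coordinatewise by end-extension is exactly the canonical countably closed forcing for adding a Kurepa tree, and the introduction of this paper records, as a theorem of Todorcevic, that a countably closed forcing of precisely this kind adds a Kurepa tree with \emph{no} Aronszajn subtree: countable closure lets one thread generic branches through any uncountable downward-closed subtree of $T^{*}$, and every node of $T^{*}$ is captured, in the sense of Proposition \ref{Asubtree}, by every countable elementary submodel containing $T^{*}$, so $T^{*}$ has no (weakly) external elements. Since Silver's argument (which you correctly invoke) shows $V_1$ has no Kurepa trees at all, $T^{*}$ is not a side effect but the central object of your model $W$ --- and it refutes the very statement you are forcing. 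No reflection argument over the two collapses can repair this: the construction sketched in your third paragraph would in particular have to produce an Aronszajn subtree of $T^{*}$, which does not exist. (That sketch is also not yet a proof on its own terms: you correctly note that the naive level-by-level diagonalization against $\aleph_2$ branches cannot work, but the proposed remedy --- that ``genericity supplies a single generic scheme'' defeating all branches at once --- restates the goal rather than supplying a mechanism.) This obstruction is exactly why the Kurepa tree must be added by a forcing with built-in combinatorial structure: Komj\'ath extracts his tree from a morass interacting with the L\'evy collapse, while the present paper uses the ccc Knaster poset $Q$ of Definition \ref{Q}, built from Todorcevic's $\rho$ function, whose generic Kurepa tree provably has Souslin subtrees through every node; that extra structure, together with the quotient analysis of $Q$, is what ultimately lets the authors dispense with one of the two inaccessibles.
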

It is natural to ask whether or not the large cardinal assumptions in Theorem \ref{Komjath} is sharp. 
In other words, assume every Kurepa tree has an Aronszajn subtree, 
then is it consistent that there are at least two inaccessible cardinals?

Let's call an $\omega_1$-tree \emph{Aronszajn free} if it has no 
Aronszajn subtree.
Without the use of large cardinals, there are various ways  to show the consistency of the existence of Aronszajn free Kurepa trees. 
It is natural to ask, if there are no large cardinals, 
do Kurepa trees have to have Aronszajn free Kurepa subtrees?
In other words, do we need large cardinals in order to show 
the existence of a Kurepa tree with no Aronszajn free Kurepa subtree?

Our work reveals a new fact
about Todorcevic's $\rho$ function. 
Based on this fact about $\rho$ and a notion of capturing which was introduced in \cite{no_real_Aronszajn}, 
we find Aronszajn subtrees in some canonical Kurepa trees
without any large cardinal assumptions.
It is worth mentioning that although we analyze some $\omega_1$-trees to prove this fact about $\rho$,
the function $\rho$  is defined in terms of ordinals with no reference to $\omega_1$-trees.

In this paper we will show the following theorem, which is stronger than Komjath's Theorem.
\begin{thm}\label{main}
Assume there is an inaccessible cardinal. Then it is consistent that there is a Kurepa tree and 
every Kurepa tree contains an Aronszajn subtree.
\end{thm}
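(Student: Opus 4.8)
The plan is to start from a model with an inaccessible cardinal $\kappa$ and force with the Levy collapse $\mathrm{Coll}(\omega_1,{<}\kappa)$, so that $\kappa$ becomes $\omega_2$ and every tree of height $\omega_1$ and size ${<}\omega_2$ that was ``potentially Kurepa'' has been sealed appropriately; the point of the inaccessible is exactly that every $\omega_1$-tree in the extension appears at some intermediate stage and has few enough branches there. However, the Levy collapse by itself destroys all Kurepa trees, so the real work is to follow it by a carefully chosen iteration that \emph{reintroduces} a Kurepa tree while simultaneously guaranteeing that every Kurepa tree in the final model has a Souslin subtree. I would use the method of walks on ordinals: fix a $C$-sequence on $\omega_1$ (or rather on $\kappa$-many relevant ordinals) and use the associated $\rho$ function to build, by a finite-support (or countable-support, as the preservation requirements dictate) iteration, generic objects that graft a Souslin tree inside each potential Kurepa tree.

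The key steps, in order, are as follows. First, I would set up the ground model combinatorics: an inaccessible $\kappa$, a suitable $\Box$-like or $C$-sequence, and the $\rho$ function of Todorcevic, recording the new lemma about $\rho$ advertised in the abstract (which I am allowed to assume once it is proved earlier). Second, I would define the iteration $\mathbb{P} = \mathrm{Coll}(\omega_1,{<}\kappa) * \dot{\mathbb{Q}}$, where $\dot{\mathbb{Q}}$ is a countable-support iteration of length $\kappa$ (or $\omega_2$) of proper, $\omega_1$-preserving posets; at stage $\alpha$, given a name for a candidate $\omega_1$-tree $\dot{T}_\alpha$ that threatens to be Kurepa without a Souslin subtree, the poset $\dot{\mathbb{Q}}_\alpha$ uses a $\rho$-based specializing/Souslin-grafting construction to add a Souslin subtree to $\dot{T}_\alpha$ — or, if $\dot{T}_\alpha$ cannot be made Kurepa, to kill its branches. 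A bookkeeping argument ensures every potential counterexample is handled. Third, I would verify that at least one Kurepa tree survives: the canonical tree built from the $C$-sequence / the collapse itself, whose Kurepa-ness is robust under the subsequent iteration because each iterand is chosen not to add a cofinal branch to it (or to add only ``harmless'' branches). Fourth, I would check the preservation theorems: $\omega_1$ is preserved (properness), $\omega_2 = \kappa$ is preserved (a $\kappa$-c.c. or $\kappa$-properness argument using the inaccessibility), and no new reals or branches sneak in that would spoil either the surviving Kurepa tree or the Souslin subtrees already added.

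The main obstacle I expect is the tension between \emph{adding} Souslin subtrees to every would-be Kurepa tree and \emph{keeping} at least one Kurepa tree alive: the natural poset that grafts a Souslin tree into $T$ or specializes the ``bad'' part of $T$ tends, if not carefully localized via the $\rho$ function, to add branches to other trees or to collapse $\omega_2$. This is precisely where the new lemma about $\rho$ should do the heavy lifting — it should let us perform the grafting in a way that is sufficiently independent of, and orthogonal to, the designated surviving Kurepa tree, so that the iteration is (say) ${<}\omega_1$-proper and does not add cofinal branches to it. A secondary technical point is arranging the iterand at a stage where the candidate tree $\dot{T}_\alpha$ turns out not to be Kurepa: there one must instead argue that $T_\alpha$ already fails to be a counterexample for a soft reason (it has fewer than $\omega_2$ branches, or an Aronszajn — hence, after further work, Souslin — subtree), so that no action, or only branch-killing, is needed. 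I would organize the write-up so that the $\rho$-lemma is isolated as a black box, the single iterand $\dot{\mathbb{Q}}_\alpha$ is analyzed once in full detail, and the global iteration then goes through by the standard preservation machinery for countable-support iterations of proper forcing over a Levy collapse of an inaccessible.
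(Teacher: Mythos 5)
Your proposal diverges from the paper's argument in a way that leaves a genuine gap at its central step. The engine of your plan is an iteration whose stage-$\alpha$ iterand ``grafts a Souslin subtree'' into a candidate Kurepa tree $\dot T_\alpha$, or else ``kills its branches.'' Neither half of this dichotomy is available. There is no known proper (let alone ccc) poset that introduces a Souslin, or even Aronszajn, subtree into an \emph{arbitrary} $\omega_1$-tree: indeed Jensen's tree in $\textsc{L}$ and Todorcevic's countably closed forcing produce Kurepa trees with no Aronszajn subtree at all, and nothing in your sketch explains why a $\rho$-based poset should be able to select a Souslin suborder from a tree it had no hand in building. The fallback is worse: once a tree has $\aleph_2$ cofinal branches, those branches persist in every cardinal-preserving outer model, so ``killing the branches'' of a tree that is already Kurepa is not an action any iterand can perform. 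What one actually has to prove is a global preservation statement --- that a tree without a Souslin subtree never \emph{acquires} $\aleph_2$ branches --- and your write-up never identifies the mechanism that makes this true. That mechanism is the elementary fact (stated in the paper just before its Corollary on Souslin subtrees of the generic tree) that if a ccc poset adds a cofinal branch $\dot b$ to an $\omega_1$-tree $U$, then $\{u\in U:\exists p\ p\Vdash u\in\dot b\}$ is a Souslin subtree of $U$; this is where Souslin subtrees come from, not from a grafting forcing.

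For comparison, the paper forces over $\textsc{L}$ with $\mathrm{coll}(\omega_1,<\lambda)$ for a single inaccessible $\lambda$ and then with the \emph{single} Knaster poset $Q$ of Definition \ref{Q} (no iteration, no bookkeeping, no countable support). The Kurepa tree is the generic tree of $Q$. Given any $K$ in the final model with no Souslin subtree, one uses Lemma \ref{projection} to find stationarily many $\mu$ with $Q_\mu$ a complete suborder of $Q$, the Jensen--Schlechta absoluteness Fact \ref{JS} to see that all downward closed Souslin subtrees of $K$ appearing at stage $\alpha$ already live in the small model $\textsc{V}_\alpha[G_\alpha]$ (hence there are only $\aleph_1$ of them and they all die by some later stage), and then a closure point $\mu$ of the resulting function: the tail $R_\mu$ is ccc, so by the branch-to-Souslin-subtree fact it cannot add a branch to $K$, and a ccc poset of size $\aleph_1$ cannot create a Kurepa tree over $\textsc{V}$; hence $K$ has at most $\aleph_1$ branches. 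If you want to pursue your own architecture, you must either exhibit the grafting poset and prove it exists for every candidate tree (which I do not believe you can), or replace the stage-by-stage plan with a quotient/reflection argument of the above kind.
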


Regarding the existence of 
a Kurepa tree with no Aronszajn free Kurepa subtree,
we show the following theorem. It is worth mentioning that the following theorem does not need any large cardinal assumption.

\begin{thm}\label{wlc}
It is consistent that there is a Kurepa tree $T$ such that whenever 
$U \subset T$ is a Kurepa tree when it is considered with the inherited order from $T$, 
then $U$ has an Aronszajn subtree.
\end{thm}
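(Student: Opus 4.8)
The plan is to build the Kurepa tree $T$ directly from Todorcevic's $\rho$ function on $\omega_1$, so that $T$ is (up to isomorphism on each level) a canonical $\rho$-tree, and then to force with a ccc poset defined from $\rho$ that simultaneously adds $\omega_2$-many cofinal branches through $T$ (making it Kurepa) while preserving enough of the combinatorial structure of $\rho$ that any Kurepa subtree $U \subseteq T$ must ``see'' a full copy of the $\rho$-coherent sequence. Concretely, I would first recall the basic metric/subadditivity properties of $\rho$ and define the tree $T(\rho) = \{\, \rho(\cdot,\beta)\rest\alpha : \alpha \le \beta < \omega_1 \,\}$ ordered by end-extension; standard arguments show $T(\rho)$ is an $\omega_1$-tree, and with a suitable guessing hypothesis (obtainable in the ground model or by a preliminary forcing) it has no uncountable antichain issues that would obstruct the construction. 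The point of using $\rho$ rather than an arbitrary coherent sequence is the \emph{new lemma about $\rho$} promised in the abstract: it will say that along any uncountable set of levels, the functions $\rho(\cdot,\beta)$ cannot be ``compressed'' — any uncountable, pairwise-incomparable family of nodes of $T(\rho)$ contains, after thinning, a subfamily on which $\rho$ behaves like a full Aronszajn/$\rho$ pattern, hence that subfamily spans an Aronszajn subtree.

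The second step is the forcing. I would take the natural ccc poset $\mathbb{P}$ whose conditions are finite approximations to an $\omega_2$-sequence of cofinal branches of $T(\rho)$, where the branches are required to be \emph{$\rho$-branches} (e.g. each branch is of the form $\bigcup_\alpha \rho(\cdot,\beta_\xi)\rest\alpha$ for a generically chosen increasing sequence, or more robustly, conditions carry finitely many partial specialization-like promises that keep the poset ccc via a $\Delta$-system plus the subadditivity of $\rho$). One shows $\mathbb{P}$ is ccc exactly as in the classical ``adding a Kurepa tree by ccc forcing'' arguments, using the coherence of $\rho$ to amalgamate two conditions whose working parts form a $\Delta$-system. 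In the extension $T = T(\rho)$ has $\omega_2$ cofinal branches, so it is Kurepa (ccc forcing preserves cardinals and the tree is unchanged as a set of levels).

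The third and decisive step is: in $V[\mathbb{P}]$, let $U \subseteq T$ be any subtree that is Kurepa, and derive an Aronszajn subtree of $U$. Since $U$ is uncountable with uncountably many branches but each level countable, I would look at the set $S$ of levels $\alpha$ at which $U$ splits, show $S$ is stationary, and then invoke the $\rho$-lemma from Step 1 to extract from the splitting nodes of $U$ an uncountable family realizing the full $\rho$-pattern; the notion of \emph{capturing} from \cite{no_real_Aronszajn} is what lets me pass from ``this pattern appears cofinally'' to ``there is an actual Aronszajn subtree sitting inside $U$'' — capturing models absorb the finite forcing conditions, so the structure extracted is genuinely in $V[\mathbb{P}]$ and not just in some further extension. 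The Aronszajn-ness (no cofinal branch) of the extracted subtree follows because a cofinal branch through it would give a node of $T(\rho)$ at which $\rho(\cdot,\beta)$ is constant on an uncountable set, contradicting the basic non-triviality of $\rho$. The final corollary about $\MA_{\omega_2}$ and $\omega_2$ not Mahlo in $\textsc{L}$ is then immediate: under $\MA_{\omega_2}$ the ground model already behaves like the ccc extension, and the hypothesis on $\textsc{L}$ guarantees $T(\rho)$ is Kurepa by the usual Solovay-style reflection count.

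The main obstacle I anticipate is the $\rho$-lemma of Step 1 together with verifying ccc-ness of a forcing that must both add $\omega_2$ branches and protect this lemma: one must choose the branches to be ``$\rho$-generic'' enough that no Kurepa subtree can avoid the pattern, yet not so constrained that the poset loses ccc — this is precisely where the interplay of subadditivity ($\rho(\alpha,\gamma) \le \max\{\rho(\alpha,\beta),\rho(\beta,\gamma)\}$ type inequalities) with the $\Delta$-system lemma has to be pushed further than in the standard treatment, and I expect that is the technical heart of the argument.
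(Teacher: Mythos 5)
There is a genuine gap, and it is at the heart of your Step 2. You fix the tree $T(\rho)=\{\rho(\cdot,\beta)\rest\alpha:\alpha\le\beta<\omega_1\}$ in the ground model from a $\rho$ function on $[\omega_1]^2$ and then propose to add $\omega_2$ many cofinal branches to it by a ccc poset whose ccc-ness comes from a $\Delta$-system argument. Any such poset is Knaster, and a Knaster poset adds no new cofinal branch to a ground-model $\omega_1$-tree (a fact recorded in the Preliminaries): given a name for a new branch, choose for each level a condition deciding the node of the branch at that level; an uncountable pairwise-compatible subfamily decides pairwise comparable nodes, yielding an uncountable chain already in the ground model. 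Since the tree of fibers of the $\omega_1$-version of $\rho$ is a coherent Aronszajn tree, your forcing cannot make it Kurepa. The paper avoids this obstruction in two ways at once: it takes $\rho:[\omega_2]^2\to\omega_1$ defined from a $\square_{\omega_1}$-sequence (so the branch indices range over $\omega_2$ and the coherence constraint lives on pairs from $\omega_2$), and it makes the tree itself generic rather than adding branches to a fixed tree --- conditions are finite functions $p$ with $\dom(p)\subset\omega_2$ and $p(\alpha)\in[\omega_1]^{<\omega}$ subject to $\max(p(\alpha)\cap p(\beta))<\rho(\alpha,\beta)$, the Knaster property follows from subadditivity of $\rho$ via Lemma \ref{cc}, and Lemma \ref{no_extera_branches} shows the generic branches $b_\xi$, $\xi\in\omega_2$, are the only branches of $T$.

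The ``new lemma about $\rho$'' is also not the statement you anticipate. It is not a Ramsey-type assertion that uncountable antichains of $T(\rho)$ contain Aronszajn patterns; it is Lemma \ref{rho}: for club many countable $N\prec H_\theta$ with $N\cap\omega_2$ in a suitable stationary set, there is a club $E\in N$ of countable models such that $\rho(\alpha_M,\alpha_N)\le M\cap\omega_1$ for every $M\in E\cap N$, and its proof is a forcing argument over the quotient $Q_c$ using the capturing machinery. The passage from a Kurepa subset $U\subset T$ to an Aronszajn subtree then goes through Proposition \ref{Asubtree} (weakly external elements witnessed by a stationary $\Sigma$), with the decisive computation being the subadditivity chain $\rho(\gamma,\eta)\le\max\{\rho(\gamma,\alpha_M),\rho(\alpha_M,\alpha_{N_\xi}),\rho(\alpha_{N_\xi},\eta)\}\le M\cap\omega_1$, which bounds $\Delta(b_\gamma,b_\eta)$ below $M\cap\omega_1$ for all $\gamma\in M$ and hence shows $M$ cannot capture the relevant node of $b_\eta\cap U$. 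Your closing claim that Aronszajn-ness would follow because a cofinal branch forces ``$\rho(\cdot,\beta)$ constant on an uncountable set'' has no analogue for the generic tree and would not produce a contradiction; the contradiction in the paper comes from an uncountable chain being captured by a model in $\Sigma$ that was supposed to capture nothing.
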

In \cite{walks} by using $\rho$, Todorcevic introduces a forcing which satisfies the Knaster condition 
and which adds a Kurepa tree. 
We use this forcing to prove Lemma \ref{rho},
which reveals a new inequality about $\rho$.
We use Lemma \ref{rho} to find Aronszajn subtrees and show Theorem \ref{wlc}.
Since the tree $T$ can be forced to exist in any model of $\square_{\omega_1}$ using  a ccc forcing,
the following corollary trivially follows from Theorem \ref{wlc}.
\begin{cor}
Assume $\MA_{\omega_2}$ holds and $\omega_2$ is not a Mahlo cardinal in $\textsc{L}$.
Then there is a Kurepa tree with the property that every Kurepa subset has an
Aronszajn subtree.
\end{cor}
The following question still remains unanswered. 
\begin{question}
Is the large cardinal assumption in Theorem \ref{main} sharp? In other words, assume every Kurepa tree has an Aronszajn subtree. Then is it consistent that there is an inaccessible cardinal?
\end{question}

\section{Preliminaries}

We will be using the following notation and terminology. 
Assume $T$ is an $\omega_1$-tree. 
For any $\alpha \in \omega_1$, $T_\alpha$ denotes the set of all elements of $T$ 
which have height $\alpha$.
$T_{< \alpha} $ is the set of all members of $T$ which have height less than $\alpha$.
$T_{\leq \alpha}$ is defined similarly.
$\mathcal{B}(T)$ is the set of all cofinal branches of $T$.
If $b$ is a cofinal 
branch in $T$ and $\alpha \in \omega_1$, $b(\alpha)$ refers to the element in $b$ which is of height $\alpha$.
If $t \in T$ and $\alpha \in \omega_1$ then $t \rest \alpha$ refers to the set of all elements $x \leq_T t$ whose height
is less than $\alpha$. 
For any $x \in T$, $T_x$ is the set of all $t \in T$ that are comparable with $x$.
In particular the predecessors of $x$ are in $T_x$.

If $x$ is a finite set of ordinals and $i \in |x|$ then $x(i)$ refers to the 
$i$'th element of $x.$
For $x,y$ finite sets of ordinals we say $x<y$ if every element in $x$
is less than every element in $y$. 
Assume $x$ is a finite set of ordinals and 
$\Seq{T^\alpha: \alpha \in x}$ are $\omega_1$-trees,
then 
$\bigotimes\limits_{\alpha \in x}T^\alpha = 
\bigcup\limits_{\xi \in \omega_1}
 \prod\limits_{\alpha \in x} T^\alpha_\xi$.
It is easy to see that  
the component-wise order on this product makes it an $\omega_1$-tree. 
With this product, for every $n \in \omega$ we can define $T^{[n]}= \bigotimes\limits_{i \in n } T$.
Assume $T$ is an $\omega_1$-tree and $\Seq{v_i: i \in n}$ are 
pairwise distinct 
elements of $T$ with the same height, then $\bigotimes\limits_{i \in n}T_{v_i}$ is called a derived tree of $T$ with dimension $n$.

In  \cite{no_real_Aronszajn} a notion of capturing is defined for linear orders. 
This notion can be used for $\omega_1$-trees as well. 
We will use this notion and Proposition \ref{Asubtree} in order to characterize 
when an $\omega_1$-tree contains an Aronszajn subtree.

\begin{defn}\cite{no_real_Aronszajn}
Assume $T$ is an $\omega_1$-tree, $\kappa$ is a large enough regular cardinal, $t \in T \cup \mathcal{B}(T)$,
and $N \prec H_\kappa$ is countable such that $T \in N$.
We say that $N$ \emph{captures} $t$ if there is a chain $c \subset T$ in $N$ which contains 
all elements of $T_{< N \cap \omega_1}$ below $t$, or equivalently $t \rest (\delta_N ) \subset c$.
\end{defn}

The following definition is a modification of Definition 3.1 in \cite{no_real_Aronszajn}.
\begin{defn}
Assume $T=(\omega_1,<)$ is an $\omega_1$-tree, $x\in T \cup \mathcal{B}(T)$ and $N \prec H_\theta$ is countable with $T \in N$.
We say that $x$ is \emph{weakly external to} $N$ if there is a stationary $\Sigma \subset [H_{({2^{\omega_1}})^+}]^\omega$
in $N$ such that  
$$\forall M \in N \cap \Sigma \textrm{, } M \textrm{ does not capture }x.$$
\end{defn}
Note that there is a major difference between the definition above and Definition 3.1 in 
\cite{no_real_Aronszajn}. If we require $\Sigma$ to be a club we obtain the definition 
of \emph{external elements} in \cite{no_real_Aronszajn}. 
This is why we call $x$ weakly external in our definition. 
The purpose of this definition is to find Aronszajn suborders. 
It turns out that the existence of weakly external elements is enough for an $\omega_1$-tree
to have Aronszajn subtrees.
This should be compared with Theorem 4.1 in \cite{no_real_Aronszajn}, 
where the existence of external elements is required for finding Aronszajn suborders.
The proof we present here uses the ideas in the proof of
Theorem 4.1 in \cite{no_real_Aronszajn}, but we will include 
it for more clarity.

\begin{prop}\label{Asubtree}
Let $T=(\omega_1,<)$ be an $\omega_1$-tree,
$\kappa = ({2^{\omega_1}})^+$ and 
$\Sigma \subset [H_\kappa]^\omega$ be stationary.
Assume for all large enough regular cardinal $\theta$ there are $x \in T$ and countable $N \prec H_\theta$ such that
$x$ is weakly external to $N$, witnessed by $\Sigma$. In other words, 
for all $M \in \Sigma \cap N$, $M$ does not capture $x$.
Then $T$ has an Aronszajn subtree.
\end{prop}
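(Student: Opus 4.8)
The plan is to build the Aronszajn subtree as a "tree of attempts to capture $x$" indexed along $x \restriction \delta_N$, so I would first unwind what weak externality buys us. Fix $\theta$ large, and let $x \in T$ and $N \prec H_\theta$ be as in the hypothesis, with $\Sigma \in N$ witnessing that $x$ is weakly external to $N$; write $\delta = \delta_N = N \cap \omega_1$. The branch $b = x \restriction \delta$ is a cofinal branch of $T_{<\delta}$. The key point is that for \emph{every} $M \in \Sigma \cap N$, $M$ fails to capture $x$, i.e. there is no chain $c \in M$ with $b \restriction \delta_M \subseteq c$; equivalently, inside $M$ there is no cofinal branch of $T_{<\delta_M}$ through the node $b(\alpha)$ for all $\alpha < \delta_M$. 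I would first record the elementarity consequence: since $\Sigma$ is stationary and lies in $N$, and $N \prec H_\theta$, for club-many (in fact stationarily many, as seen in $N$) countable $M \prec H_\kappa$ with $T \in M$ we get non-capturing, and this is a property that reflects.

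Next I would describe the subtree. Following the proof of Theorem 4.1 in \cite{no_real_Aronszajn}, the idea is to use the node $x$ (more precisely the branch $x \restriction \delta$) to "thin out" $T$ below level $\delta$ so that along a closed unbounded set of levels the surviving tree has no cofinal branch, hence is Aronszajn on a club, hence contains an Aronszajn subtree after re-indexing. Concretely: pick inside $N$ a continuous $\in$-chain $\langle M_\xi : \xi < \omega_1\rangle$ of countable elementary submodels of $H_\kappa$ with $T, \Sigma \in M_0$, each $M_\xi \in \Sigma$ (possible since $\Sigma$ is stationary, by a standard argument constructing such a chain, and this construction can be carried out inside $N$), and let $C = \{\delta_{M_\xi} : \xi < \omega_1\}$, a club in $\omega_1$ that belongs to $N$. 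For $\eta \in C$, say $\eta = \delta_{M_\xi}$, non-capturing of $x$ by $M_\xi$ says precisely that there is no $c \in M_\xi$ which is a branch of $T$ containing $x \restriction \eta$. I then define $S \subseteq T$ to be (a relabelling of) the set of $t \in T$ at levels in $C$ that "look like initial segments of $x$" in the weak sense dictated by the non-capturing — the details here mirror the external-element construction, but using $\Sigma$-stationarity in place of club-ness, and the point is that the weaker hypothesis still suffices because we only ever need \emph{some} $M \in \Sigma \cap N$ at each relevant level, not all clubof them.

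Then I would verify the two defining properties of an Aronszajn tree for $S$. That $S$ has height $\omega_1$ and all levels countable is immediate from $T$ being an $\omega_1$-tree and the relabelling along the club $C$. The substantive claim is that $S$ has no cofinal branch. Suppose $d$ were a cofinal branch of $S$. Working in $H_\theta$, note $d$ together with the witnessing data can be found to reflect into one of the models $M_\xi \in \Sigma \cap N$ appearing in our chain (here we use that the chain and $\Sigma$ are in $N$, and run an elementarity/reflection argument inside $N$). But then $d \cap M_\xi$ would be a chain in $M_\xi$ capturing $x$ at level $\delta_{M_\xi}$ — because the construction of $S$ forces any cofinal branch of $S$ to agree with $x \restriction \delta$ on a tail — contradicting that $M_\xi$ does not capture $x$. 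Hence $S$ is Aronszajn, and it embeds into $T$ as a subtree (with the induced order), completing the proof.

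The main obstacle I anticipate is making precise the relabelling/thinning of $T$ along $C$ so that (i) every cofinal branch of the resulting tree is \emph{forced} to track $x \restriction \delta$ closely enough that it yields a capturing chain in one of the $M_\xi$, while (ii) the tree stays of size $\omega_1$ with countable levels. In the external-elements proof of \cite{no_real_Aronszajn} this is handled using the club of non-capturing models; the delicate point here is to check that stationarily-many (rather than club-many) non-capturing models in $\Sigma \cap N$ genuinely suffice — i.e. that in the branch-killing argument we only ever need one suitable $M_\xi$ at a time, obtained by a reflection argument that produces an element of $\Sigma$ rather than needing the whole club to consist of non-capturers. I expect this to go through exactly because the contradiction in the no-branch argument is local to a single level $\delta_{M_\xi}$.
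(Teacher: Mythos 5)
Your proposal has a genuine structural gap: everything you build is confined below the level $\delta_N = N\cap\omega_1$, which is a countable ordinal, so it cannot produce an Aronszajn \emph{subtree} (an uncountable object). The weak-externality hypothesis for a single pair $(x,N)$ only gives you information about the countably many models $M\in\Sigma\cap N$, all of which satisfy $\delta_M<\delta_N$; the branch $x\rest\delta_N$ you propose to ``track'' is itself countable. To get an uncountable subtree you must exploit the hypothesis across many nodes and many models simultaneously. The paper does this by defining, for each $t\in T$, the set $W_t$ of countable $N'\prec H_\theta$ (containing $\Sigma,T$) for which \emph{some} $s>t$ is not captured by any $M\in\Sigma\cap N'$, and taking $A=\{t: W_t\text{ is stationary}\}$. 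Downward closure and the identity $W_t=\bigcup\{W_s: s>t,\ \Ht(s)=\delta\}$ give that $A$, once nonempty, is uncountable; nonemptiness comes from reflecting the hypothesis inside a model of a larger $H_\lambda$; and a cofinal branch $b\subset A$ is ruled out by choosing $M\in\Sigma$ with $b\in M$ \emph{after} $b$ is given (using stationarity of $\Sigma$), then $N\in W_{b(\delta_M)}$ with $M\in N$ (using stationarity of $W_{b(\delta_M)}$), so that $M$ captures the witnessing $s$ via $b$ --- a contradiction. Your scheme has no analogue of this two-layer stationarity argument.

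Two further steps in your write-up fail as stated. First, you cannot pick a \emph{continuous} $\in$-chain $\Seq{M_\xi:\xi<\omega_1}$ with every $M_\xi\in\Sigma$: at limit stages the union of countably many members of a stationary (non-club) set need not lie in $\Sigma$ --- this is exactly the distinction between ``weakly external'' and ``external'' that the proposition is designed to negotiate, so it cannot be waved through by ``a standard argument.'' (Also, such a chain cannot live ``inside $N$'' in the sense you need: only the $M_\xi$ with $\xi<\delta_N$ are elements of $N$, and the hypothesis says nothing about the others.) Second, in your branch-killing step you need the putative branch $d$ to be an \emph{element} of some $M\in\Sigma$ in order for $d\cap M$ to be a capturing chain in $M$; since $d$ is constructed after your chain of models is fixed, no reflection argument hands you such an $M$ unless you invoke stationarity of $\Sigma$ against a model chosen after $d$, which is what the paper's proof does and your setup does not permit.
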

\begin{proof}
Fix $\theta$ as in the proposition.
For each $t \in T$ let $W_t$ be the set of all countable $N' \prec H_\theta$
such that $\Sigma , T$ are in $N'$ and there is $s>t$ such that for all $M \in \Sigma \cap N'$,
$M$ does not capture $s$.
Let $A$ be the set of all $t \in T$ such that $W_t$ is stationary. We will show that $A$ is Aronszajn.

First note that $A$ is downward closed.
This is because if $t < t'$ then $W_{t'} \subset W_t$.
Moreover, if $t \in T$, $\delta \in \omega_1$ and  $\Ht(t) < \delta$ then $W_t = \bigcup \{ W_s : s>t \textrm{ and } \Ht(s)=\delta  \}$. 
In other words, if $A \neq \emptyset$ then $A$ is uncountable.
So it suffices to show that $A\neq \emptyset$ and $A$ does not contain any uncountable 
branch of $T$.

First we will show that $A \neq \emptyset$. 
Fix a regular cardinal $\lambda > 2^\theta$ such that $\theta$ is definable in $H_\lambda$.
Let $P \prec H_\lambda$ be countable such that for some $x \in T$ 
$\Sigma$ witnesses that $x$ is weakly external to $P$.
Let $t \in T \cap P$ and $t < x$. 
Then $P\cap H_\theta \in W_t \in P$.
By the fact that $P \prec H_\lambda$,  $W_t$ intersects every closed unbounded subset of 
$[H_\theta]^\omega$ which means that it is stationary and $A \neq \emptyset$.

In order to see $A$ contains no uncountable branch of $T$, assume for a contradiction 
that $b \subset A$ is a cofinal branch. 
Let $M \prec H_\kappa$ be countable such that $T,A,b,$ are in $M$ and $M \in \Sigma.$
Let $\delta = M \cap \omega_1$ and $t = b(\delta)$.
Let $N \prec H_\theta$ be countable such that $N \in W_t$ and $M \in N$. 
This is possible because $t \in A$ and $W_t$ is a stationary subset of $[H_\theta]^\omega$.
Let $s>t$ be the element in $T$ such that for all $Z \in \Sigma \cap N$, $Z$ does not capture $s$.
But $M \in \Sigma \cap N$ and it captures $s$ via $b$. This is a contradiction.
\end{proof}
If $T$ is an $\omega_1$-tree
with an Aronszajn subtree $A$, $N \prec H_\theta$ is countable
with $A  \in N$,
and $x \in A \setminus N$, then $x$ is external to $N$.
This makes the following corollary immediate.
\begin{cor}
Assume $T=(\omega_1, < )$ is an $\omega_1$-tree. Then the following are equivalent:
\begin{itemize}
\item $T$ has an Aronszajn subtree.
\item For all large enough regular cardinal $\theta$ there are $x \in T$ and countable 
$N \prec H_\theta$ such that $x$ is external to $N$.
\item For all large enough regular cardinal $\theta$ there are $x \in T$ and countable 
$N \prec H_\theta$ such that $x$ is weakly external to $N$.
\end{itemize}
\end{cor}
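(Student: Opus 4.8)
The plan is to establish the chain $(1)\Rightarrow(2)\Rightarrow(3)\Rightarrow(1)$, referring to the three bullets in the order listed. The implication $(2)\Rightarrow(3)$ requires nothing: a club subset of $[H_{{2^{\omega_1}}^+}]^\omega$ is stationary, so any witness to externality is at the same time a witness to weak externality.

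For $(1)\Rightarrow(2)$ I will use the remark recorded immediately before the corollary, which says precisely that an Aronszajn subtree forces externality. Concretely, fix an Aronszajn subtree $A\subseteq T$, set $\kappa={2^{\omega_1}}^+$ and $\Sigma=\{M\prec H_\kappa : A\in M\}$ (a club in $[H_\kappa]^\omega$), and take any regular $\theta$ large enough that $A,\Sigma\in H_\theta$. Given a countable $N\prec H_\theta$ with $A\in N$, we have $\Sigma\in N$ because $\Sigma$ is definable from $A$, and since $A$ is uncountable we may pick $x\in A\setminus N$; in particular $x\in T$. Then $x$ is external to $N$, with $\Sigma$ as witness: for $M\in\Sigma\cap N$ one has $M\subseteq N$ (as $M$ is countable and lies in $N$), so $\delta_M\leq\delta_N\leq\Ht(x)$, and if a chain $c\in M$ captured $x$ then $c\cap A\in M$ would be a chain of $A$ unbounded in $A$ — unbounded because $x\rest\delta_M\subseteq c\cap A$ reaches height $\delta_M$ while any bound of $c\cap A$ would lie in $M$, hence below $\delta_M$ — so $A$ would have a cofinal branch, contradicting that $A$ is Aronszajn.

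For $(3)\Rightarrow(1)$ the heart of the matter is Proposition \ref{Asubtree}; the only thing to arrange is that the stationary set $\Sigma$ witnessing weak externality can be chosen uniformly in $\theta$, since in Proposition \ref{Asubtree} the set $\Sigma$ is fixed in advance whereas $(3)$ a priori lets it depend on $\theta$. I would first record a reflection fact: if $x$ is weakly external to some countable $N'\prec H_{\theta'}$ as witnessed by a stationary $\Sigma\subseteq[H_\kappa]^\omega$, and $\theta<\theta'$ are regular with $\theta$ large enough and $\theta\in N'$, then $x$ is weakly external to $N'\cap H_\theta\prec H_\theta$ as witnessed by the same $\Sigma$ — simply because $(N'\cap H_\theta)\cap\Sigma=N'\cap\Sigma$. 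Consequently, if $\mathcal S_\theta$ denotes the set of stationary $\Sigma\subseteq[H_\kappa]^\omega$ witnessing the weak-externality clause of $(3)$ at $\theta$, then the $\mathcal S_\theta$ are nonempty (by $(3)$), $\subseteq$-decreasing in $\theta$, and all contained in the single set $\mathcal P([H_\kappa]^\omega)$. Hence their intersection over all large $\theta$ is nonempty: were it empty, every $\Sigma\in\mathcal P([H_\kappa]^\omega)$ would drop out of some $\mathcal S_{\theta_\Sigma}$, and past the supremum of the set-many ordinals $\theta_\Sigma$ the sets $\mathcal S_\theta$ would all be empty, a contradiction. Fixing $\Sigma$ in that intersection, Proposition \ref{Asubtree} applies and delivers an Aronszajn subtree of $T$.

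The only mildly delicate point, as I see it, is this last quantifier manipulation in $(3)\Rightarrow(1)$: matching the ``$\exists\Sigma$ inside $\forall\theta$'' form of $(3)$ with the ``$\forall\theta$ inside $\exists\Sigma$'' form required by Proposition \ref{Asubtree}. The reflection fact makes the family $(\mathcal S_\theta)_\theta$ $\subseteq$-decreasing, and since there is merely a set of candidate $\Sigma$'s this forces a common choice. Everything else is either immediate or already done in the excerpt.
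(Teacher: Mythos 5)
Your overall route is the paper's own (the paper essentially just points at the remark before the corollary for the first implication and at Proposition \ref{Asubtree} for the last one), and your $(2)\Rightarrow(3)$ is fine, but two steps do not work as written. In $(1)\Rightarrow(2)$, the inclusion $x\rest\delta_M\subseteq c\cap A$ silently assumes that $A$ is downward closed in $T$: $x\rest\delta_M$ consists of the $T$-predecessors of $x$, and for a subtree taken with the inherited order (which, as the paper stresses for Kurepa subsets, need not be downward closed) these need not lie in $A$ at all, so $c\cap A$ could a priori be empty. What the argument actually needs is that $x\in A\setminus N$ has $A$-height at least $\delta_N$, hence (since $\delta_M$ lies in the clubs definable from $A,T\in M$ on which $A$-heights and $T$-heights below $\delta_M$ agree) $x$ has $A$-predecessors at $T$-heights cofinal in $\delta_M$; these all lie in $c$, so $c\cap A$ is cofinal below $\delta_M$ and elementarity of $M$ then forces it to be an uncountable chain of $A$. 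Without some such repair the displayed inclusion, and with it the contradiction, fails.

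In $(3)\Rightarrow(1)$, your reflection fact carries the hypothesis $\theta\in N'$, but when you invoke it to show $\mathcal S_{\theta'}\subseteq\mathcal S_\theta$ you apply it to the witness $(x,N')$ handed to you by clause $(3)$ at $\theta'$, and that is a bare existential witness: nothing guarantees $\theta\in N'$, and $N'\cap H_\theta$ need not be elementary in $H_\theta$ otherwise. So the monotonicity of the family $(\mathcal S_\theta)_\theta$, on which the entire intersection argument rests, is not established. Identifying the quantifier mismatch between clause $(3)$ and the hypothesis of Proposition \ref{Asubtree} is a real observation (the paper passes over it in silence), but the cleaner repair is to note that the proof of Proposition \ref{Asubtree} only uses its hypothesis at a single level, namely the cardinal $\lambda>2^\theta$ used to produce the model $P$ witnessing $A\neq\emptyset$; so one application of clause $(3)$ at that one $\lambda$ already supplies a $\Sigma$ for which the argument of the proposition goes through, and no uniformity of $\Sigma$ in $\theta$ is needed.
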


We will use the following facts from \cite{Mahlo_gen_ccc} which are due to 
Jensen and Schlechta.
For more clarity we will include the sketch of their proofs.

\begin{fact}\cite{Mahlo_gen_ccc}\label{JS}
Assume $A \in \textsc{V}$ is a countably closed poset, $F \subset A$ is $\textsc{V}$-generic, $B \in \textsc{V}$ is a 
ccc poset and $G \subset B$ is $\textsc{V}[F]$-generic. 
Let $T \in \textsc{V}[G]$ be a normal $\omega_1$-tree. 
\begin{enumerate}
\item If $b \in \textsc{V}[F][G]$ is a cofinal branch in $T$,
then  $b \in \textsc{V}[G]$.
\item If $S \in \textsc{V}[F][G]$ is 
a downward closed Souslin subtree of $T$  then $S \in \textsc{V}[G]$.
\end{enumerate}
\end{fact}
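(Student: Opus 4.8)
The plan is as follows. Since $B\in\textsc{V}$, the two-step iteration $A\ast\check B$ is isomorphic to the product $A\times B$, so $\textsc{V}[F][G]=\textsc{V}[F\times G]=\textsc{V}[G][F]$, where $G$ is $B$-generic over $\textsc{V}$, $F$ is $A$-generic over $\textsc{V}[G]$, and $F\times G$ is $(A\times B)$-generic over $\textsc{V}$; in particular $T$, being in $\textsc{V}[G]$, carries a $B$-name $\dot T\in\textsc{V}$, and it suffices to show that forcing with $A$ over $\textsc{V}[G]$ adds neither a new cofinal branch of $T$ nor a new downward-closed Souslin subtree of $T$. I would first record that $A$ is $\omega$-distributive over $\textsc{V}[G]$: since $A$ is countably closed in $\textsc{V}$ it adds no new $\omega$-sequence of members of $\textsc{V}$, and since countable closure preserves the ccc, $B$ is still ccc in $\textsc{V}[F]$, so a nice $B$-name for a real over $\textsc{V}[F]$ is a countable subset of $\omega\times B\in\textsc{V}$ and hence, by the previous remark, lies in $\textsc{V}$; consequently every real of $\textsc{V}[F][G]$ is already in $\textsc{V}[G]$. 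It follows that $\omega_1$ is preserved, that for a branch $b$ as in (1) every $b\rest\alpha$ ($\alpha<\omega_1$) is in $\textsc{V}[G]$, and that for $S$ as in (2) every $S\cap T_{<\alpha}$ is in $\textsc{V}[G]$; so in either case the object to recover is an $\omega_1$-sequence of members of $\textsc{V}[G]$, and only the sequence itself is at issue.

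For (1) I would argue by contradiction. Fixing names and a condition $(p_0,q_0)\in A\times B$ that forces $\dot b$ to be a cofinal branch of $\dot T$ not belonging to $\textsc{V}[\dot G_B]$, I work in $\textsc{V}$, where $A$ is countably closed. The heart of the argument is a fusion: one builds a tree $\langle p_s:s\in 2^{<\omega}\rangle$ of conditions of $A$, decreasing along branches of $2^{<\omega}$, together with $B$-names $\langle\dot u_s\rangle$ and ordinals $\langle\beta_n\rangle$, so that $p_s$ forces $\dot b$ to pass through a level-$\beta_{|s|}$ node $\dot u_s$ of $\dot T$, the sibling nodes $\dot u_{s\concat 0}$ and $\dot u_{s\concat 1}$ are forced distinct, and $\delta:=\sup_n\beta_n$ is a single countable ordinal along every branch. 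The hypothesis $\dot b\notin\textsc{V}[\dot G_B]$ is exactly what keeps the splitting alive: if below some condition $\dot b$ could no longer be split on the $A$-side, one would read off a $B$-name in $\textsc{V}$ agreeing with $\dot b$ on a tail, which together with the $\omega$-distributivity above would put $\dot b$ in $\textsc{V}[\dot G_B]$. Now for each $s\in 2^\omega$ the sequence $\langle p_{s\rest n}:n<\omega\rangle$ lies in $\textsc{V}$, so countable closure of $A$ in $\textsc{V}$ provides a lower bound $p^*_s\in A$; from $p^*_s$ one extracts a $B$-name $\dot v_s$ for a level-$\delta$ node of $\dot T$ lying above every $\dot u_{s\rest n}$, with $\dot v_s\ne\dot v_{s'}$ forced whenever $s\ne s'$. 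Finally, by the ccc of $B$ there is a countable set $\Lambda\in\textsc{V}$ with $\Vdash_B\dot T_\delta\subseteq\Lambda$, so in $\textsc{V}[G]$ one would obtain $2^{\aleph_0}$ pairwise distinct elements of the countable set $T_\delta$, a contradiction. Hence $b\in\textsc{V}[G]$.

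Part (2) goes the same way, with the fusion deciding the initial pieces $\dot S\cap\dot T_{<\beta}$ in place of initial segments of a branch: a downward-closed Souslin subtree $S\notin\textsc{V}[G]$ would yield $2^{\aleph_0}$ pairwise distinct downward-closed subtrees of the countable tree $T_{<\delta}$ all arising at the single countable level $\delta$, which is impossible; alternatively one reduces (2) to (1) by forcing a cofinal branch through $S$ and noting that this branch, hence $S$ itself, must already lie in $\textsc{V}[G]$.

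The step I expect to be the main obstacle is coordinating the two forcings inside the fusion. Although $A$ is countably closed in $\textsc{V}$, it need not remain countably closed in $\textsc{V}[G]$, since the ccc forcing $B$ can add $\omega$-sequences of conditions of $A$ with no lower bound; so the lower-bound step of the fusion must be performed in $\textsc{V}$, which forces the whole construction to be carried out with $B$-names and the ccc side to be handled explicitly rather than swept under the rug. The device for this is to thread, at every stage of the fusion, a countable (by ccc) maximal antichain of $B$-conditions and let the $A$-conditions depend on it, so that the generic $G$ selects a coherent branch through these antichains along which the $\textsc{V}$-side countable closure of $A$ still delivers the bounds $p^*_s$ and along which the final count $2^{\aleph_0}>|T_\delta|$ genuinely takes place in $\textsc{V}[G]$. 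Fitting all this bookkeeping together — the uniform $\delta$, the names $\dot u_s$ and $\dot v_s$, the $B$-antichains, and the counting argument — is the crux.
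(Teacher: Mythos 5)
Your argument is correct and is essentially the paper's own proof: both run a splitting/fusion argument on the countably closed side, carried out in $\textsc{V}$ with $B$-names, take lower bounds along each branch of $2^{<\omega}$ by countable closure, and derive the contradiction from $2^{\aleph_0}$ pairwise distinct nodes on a single countable level of $T$ (the paper formalizes your ``splitting stays alive'' dichotomy via the product $A^2\times B$ and two copies $\tau_0,\tau_1$ of the name, which is the same content, and it needs no threading of $B$-antichains since everything is forced by $\mathbb{1}_B$). The one thing I would drop is your alternative reduction of (2) to (1) by generically forcing a branch through $S$ --- such a branch lives only in a further extension, so (1) does not apply to it --- but your primary fusion argument for (2), deciding the initial segments $\dot S\cap \dot T_{<\beta}$ and separating siblings at a level so that each branch of $2^\omega$ yields its own node of $T_\delta$, is exactly what the paper does.
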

\begin{proof}
Assume for a contradiction that $b \in \textsc{V}[F][G] \setminus \textsc{V}[G]$ is a branch in $T$, 
and let $\dot{b}$ be the name which is forced by $1$ to be outside of $\textsc{V}[G]$.
For $k \in 2,$ let $j_k: A\times B \longrightarrow A^2 \times B$ be the injections which take 
$(p,q)$ to $(1, p,q)$ and $(p, 1, q)$.
Obviously, these injections naturally induce injections on $(A \times B)$-names. 
We will abuse the notation and use $j_k$ for the injections on names too. 
Let $j_k(\dot{b})= \tau_k$ for $k \in 2$.
Since $b \notin \textsc{V}[G]$, $1_{A^2 \times B} \Vdash \tau_0 \neq \tau_1$.

Note that the set  $D=\{(a_0,a_1) \in A^2: \exists \alpha \in \omega_1$
$(a_0, a_1, 1_B) \Vdash \tau_0(\alpha) \neq \tau_1(\alpha) \}$
is dense in $A^2$.
This uses an argument similar to the proof of fullness lemma and the fact that countably closed posets
do not add new countable subsets of the ground model.
Similarly, the set $D_\alpha = \{ a \in A : $ for some $B$-name 
$\dot{x}$, $(a, 1_B) \Vdash \dot{b}(\alpha) = \dot{x} \}$ is dense in $A$.
Now construct an increasing sequence $\alpha_n, n \in \omega$ and $a_s, \dot{x}_s$ for $s \in 2^{< \omega}$
such that:
\begin{itemize}
\item $(a_s, 1) \Vdash \dot{b} (\alpha_{|s|}) = \dot{x}_s$ where $\dot{x}_s$ is a $B$-name in 
$\textsc{V}$,
\item $a_{s\frown 0}, a_{s \frown 1}$ are both below $a_s$ and
 $(a_{s \frown 0}, a_{s \frown 1}, 1) \Vdash  \dot{x}_{s\frown 0} \neq \dot{x}_{s \frown 1}$.
\end{itemize}
For each $r \in 2^\omega \cap \textsc{V}$ let $a_r$ be the lower bound for $\Seq{a_s : s \subset r}$.
In $\textsc{V}[G]$ let $y_r$ be the element which is forced by $a_r$ to be the element on top of $\Seq{x_s : s \subset r}$. This means that $T$ has an uncountable level in $\textsc{V}[G]$ which is a contradiction.

The proof of the statement for Souslin subtrees uses similar ideas and the following facts, which we 
briefly mention. 
First note that if $X$ is a countable subset of $\textsc{V}$
which is in $\textsc{V}[F][G]$ then $X \in \textsc{V}[G]$. 
Also, if $S$ is a Souslin subtree of $T$ in $\textsc{V}[G]$ 
then it is Souslin in $\textsc{V}[F][G]$. 
If $S \in \textsc{V}[F][G] \setminus \textsc{V}[G]$ is a downward closed Souslin subtree of $T$ 
then there is downward closed Souslin $S' \subset S$ such that every cone $S'_x$ is outside of $\textsc{V}[G]$ for all $x \in S'$.

Now assume for a contradiction that $S$ is a Souslin subtree of $T$ which is in 
$\textsc{V}[F][G] \setminus \textsc{V}[G].$ 
Without loss of generality we can assume that every cone $S_x$ is outside of $\textsc{V}[G]$, 
for every $x \in S$. 
Assume $\dot{S}$ is the name which is forced by $1$ to be outside of $\textsc{V}[G]$.
Again let $\tau_k$ be the corresponding names $j_k(\dot{S})$ as above.

Let $S_k$ be the Souslin tree for $\tau_k$, for $k \in 2,$ in the extension by 
$(F_0,F_1,G) \subset A^2 \times B$ which  is $\textsc{V}$-generic.
Note that $S_0 \cap S_1 \subset T_{< \alpha}$ for some $\alpha \in \omega_1$. 
In order to see this assume $S_0 \cap S_1$ is uncountable. 
Then $S_0 \cap S_1$ is an uncountable downwards closed subtree of $S_0 \cup S_1$.
But $S_0 \cup S_1$ is a Souslin tree.
So $S_0 \cap S_1$ contains a cone from $S_0 \cup S_1$.
Then for some $x \in S_0 \cap S_1$, $(S_0)_x = (S_1)_x$.
But then $(S_0)_x = (S_1)_x \in \textsc{V}[F_0][G] \cap \textsc{V}[F_1][G]= \textsc{V}[G]$, 
which is a contradiction.
Choose an increasing sequence $\Seq{\alpha_n: n \in \omega }$ and a sequence 
$\Seq{a_s, \dot{x}_s : s  \in 2^{<\omega}}$ such that:
\begin{itemize}
\item $(a_s, 1) \Vdash \dot{S} \cap T_{< \alpha_{|s|}} = \dot{x}_s$, where $\dot{x}_s$ is a $B$-name in $\textsc{V}$,
\item $a_{s\frown 0}, a_{s \frown 1}$ are both below $a_s$,
\item 
$(a_{s \frown 0}, a_{s\frown 1}, 1) \Vdash T_{\alpha_s} \cap \dot{x}_{s \frown 0} \cap \dot{x}_{s\frown 1} = \emptyset$.
\end{itemize}
For each $r \in 2^\omega \cap \textsc{V}$, let $a_r$ be a lower bound for $\Seq{a_s: s \subset r}$.
Also let $\alpha = \sup \{ \alpha_n : n \in \omega\}$.
Now we work in $\textsc{V}[G]$.
For each $r$ let $x_r = \bigcup\limits_{s \subset r}\dot{x}_s[G]$.
Note that if $r \neq r'$ then there is no $t \in T_\alpha$ such that the set of predecessors of $t$ is contained in $x_r \cap x_{r'}$.
For each $r$, let $y_r \in T_\alpha$ such that $\{ t \in T: t < y_r \} \subset x_r$.
But this means that $T_\alpha$ is uncountable which is a contradiction.
\end{proof}

In this paper $ \textrm{coll}(\omega_1, < \lambda)$
refers to the usual Levy collapse forcing with countable 
conditions which collapses every cardinal less than $\lambda$
to $\omega_1$.
Fact \ref{JS} immediately implies the following lemma. 

\begin{lem}\label{small_extension}
Let $\lambda \in \textsc{V}$ be an inaccessible cardinal,
$F \subset \textrm{coll}(\omega_1, < \lambda)$ be 
$\textsc{V}$-generic, $\mathbb{P}$ be a ccc poset of size $\aleph_1$ in 
$\textsc{V}[F]$, $G \subset \mathbb{P}$ be $\textsc{V}[F]$-generic
and $U \in \textsc{V}[F][G]$ be an $\omega_1$-tree. 
Then $U$ has at most $\aleph_1$ many Souslin subtrees and 
cofinal branches in $\textsc{V}^\mathbb{P}$.
\end{lem}
\begin{proof}
For every $\alpha \in \lambda$, let 
$F_\alpha = F \cap \textrm{coll}(\omega_1, <\alpha)$. 
Let $\kappa< \lambda$ be a regular uncountable cardinal
such that $\mathbb{P} \in \textsc{V}[F_\kappa]$ and 
$U \in \textsc{V}[F_\kappa][G].$ 
Fact \ref{JS} implies the following. 
\begin{itemize}
\item If $b \in \textsc{V}[F][G]$ is a cofinal branch of $U$ then it is in 
$\textsc{V}[F_\kappa][G]$.
\item If $S \in \textsc{V}[F][G]$ is a Souslin subtree of $U$ then it is in 
$\textsc{V}[F_\kappa][G]$.
\end{itemize}
It is obvious that $|\mathcal{B}(T) \cap \textsc{V}[F_\kappa][G]|= \aleph_1$, in $\textsc{V}[F][G]$.
Similarly the conclusion follows for Souslin subtrees of $U$.
\end{proof}

The following Lemma from \cite{club_isomorphic} is useful in finding 
club embeddings between $\omega_1$-trees.
\begin{lem}[Lemma 3.2 of \cite{club_isomorphic}]\label{AS}
Assume $R$ and all its derived trees are Souslin, 
$A$ is an Aronszajn tree and $R'$ is a derived tree of $R$ whose 
dimension is $n$.
Moreover assume forcing with $R'$ adds a new branch to $A$
and $R'$ has the least dimension with respect to this property 
among the derived trees of $R$.
Then  $R'$ club embeds into $A$.
\end{lem}

\begin{lem}\label{AddAntichain}
For every Aronszajn tree $A$ there is a forcing $P_A$ which
\begin{itemize}
\item adds an uncountable antichain to $A$, 
\item preserves cardinals and  
\item adds no new cofinal branch to  $\omega_1$-trees of the ground model.
\end{itemize}  
\end{lem}
\begin{proof}
For every $\omega_1$-tree $A$, let $P_A$ be the poset consisting 
of all finite antichains in $A$.
Based on the work in \cite{embedding_Atrees_rationals}, 
if $A$ is Aronszajn and $W$ is an uncountable collection of pairwise disjoint finite antichains of $A$, then there are distinct $x,y$ in $W$ such that 
$x \cup y \in P_A$.
Moreover, $P_A$ is ccc if and only if $A$ is Aronszajn.
Since $P_A$ is a ccc poset of size $\aleph_1$, it preserves 
cardinals.
Usual density arguments, shows that $P_A$ adds an uncountable antichain to $A$.

First we show that $P_A$ does not add branches to the 
Aronszajn trees of the ground model. Assume $U$ is an 
Aronszajn tree. 
Without loss of generality assume $U,A$ are disjoint. 
Obviously $A\cup U$ with $<_A \cup <_U$ is an Aronszajn tree. 
Define $\varphi$ from  $P_A \times P_U$ to $P_{A\cup U}$ by
$\varphi(a,b) = a\cup b$.
Observe that $\varphi$ is an isomorphism.
Therefore $P_A \times P_U$ is ccc.
Hence $U$ remains Aronszajn after forcing with $P_A$.

Now we show that $P_A$ does not add new cofinal branches to 
$\omega_1$-trees of the ground model. 
To this end, let $U$ be an $\omega_1$-tree in the ground model, 
to which $P_A$ adds a new branch. Since 
$P_A$ is ccc, the possible points of the new branch 
forms a Souslin subtree of $U$. In particular, 
there is a Souslin tree in the ground model to which 
$P_A$ adds a cofinal branch.
But this is impossible because of what we just showed above.
\end{proof}

It is worth pointing out that in the presence of $\CH$
there are posets which 
in addition to satisfying the requirements of 
Lemma \ref{AddAntichain}, 
do not add new reals.
This is the poset introduced in Remark 5.2 of \cite{first}.
Let $Q_S$ be the poset consisting of all $q=(X_q, \mathcal{U}_q)$
such that:
\begin{itemize}
\item $X_q$ is a countable downward closed subset of $S$ which has a 
last level of height $\alpha_q$, 
\item $\mathcal{U}_q$ is a non-empty countable set and for every 
 $U \in \mathcal{U}_q$  there exists $n \in \omega$ such that $U$ is a 
pruned  downwards closed subtree of $S^{[n]}$,
\item for every $U \in \mathcal{U}_q$ there is a $\sigma \in U$ 
which is a subset of the last level of $X_q$.
\end{itemize}
We let $p \leq q$ if $(X_p)_{\leq \alpha_q} = X_q$ and $\mathcal{U}_q \subset \mathcal{U}_p$.

Observe that for every $q \in Q_S$ and $s \in S$
there are $t>s$ and $p<q$ such that $\alpha_p > \Ht(t)$ and 
$t \notin X_q$.
This shows that if $G \subset Q_S$ is generic then 
$\bigcup\limits_{p \in G} X_p$ does not contain any cone $S_s$.
Obviously, $\bigcup\limits_{p \in G} X_p$ is uncountable downward closed.
Therefore, the minimal elements of $S \setminus \bigcup\limits_{p \in G} X_p$ forms an uncountable antichain in $S$.

Lemma 5.3 of \cite{first} asserts that there exists 
a poset which projects onto $Q_S$ 
and which
does not add 
new branches to $\omega_1$-trees of the ground model.
Therefore, $Q_S$ does not add new branches to $\omega_1$-trees of the ground model.
The fact that $Q_S$ preserves cardinals follows from 
Remark 5.2 in \cite{first}. $\CH$ is only used for preserving 
$\omega_2$.
The same remark also explains why $Q_S$ does not add new reals. 

We will use $\square_{\omega_1}$ in order to have the structure of walks on ordinals up to 
$\omega_2$. The following is the standard definition of $\square_{\omega_1}$.
\begin{defn}
A sequence $\Seq{C_\alpha : \alpha \textrm{ is limit and } \omega_1 < \alpha < \omega_2}$ is said to be a \emph{$\square_{\omega_1}$-sequence} if 
\begin{itemize}
\item $C_\alpha$ is a closed unbounded subset of $\alpha$,
\item $\textrm{otp}(C_\alpha) < \alpha$ and
\item if $\alpha$ is a limit point of $C_\beta$ then $C_\beta \cap \alpha= C_\alpha $.
\end{itemize}
The assertion that there is a $\square_{\omega_1}$-sequence is  called $\square_{\omega_1}.$
\end{defn}
The following proposition is obtained from standard argument using $\square_{\omega_1}$-sequences.
\begin{prop} \label{square}
If  $\square_{\omega_1}$ holds then there is a sequence 
$\Seq{C_\alpha : \alpha \in \omega_2}$ such that 
\begin{itemize}
\item $C_\alpha$ is a closed unbounded subset of $\alpha$,
\item $C_{\alpha +1} = \{ \alpha \}$,
\item $\textrm{otp}(C_\alpha) \leq \omega_1$ 
and if 
 $\textrm{cf}(\alpha) = \omega$ then $\textrm{otp}(C_\alpha)< \omega_1$,
\item if $\alpha \in C_\beta$ and $\beta$ is limit then $\textrm{cf}(\alpha) \leq \omega$,
\item if $\alpha$ is a limit point of $C_\beta$ then $C_\beta \cap \alpha= C_\alpha $.
\end{itemize}
 
\end{prop}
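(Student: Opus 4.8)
The plan is to reshape a given $\square_{\omega_1}$-sequence, and two cheap observations organize most of the work. First, for any limit ordinal $\alpha$ and any club $C\subseteq\alpha$ one has $\mathrm{cf}(\mathrm{otp}(C))=\mathrm{cf}(\alpha)$; so once we have arranged $\mathrm{otp}(C_\alpha)\le\omega_1$ for all $\alpha$, the clause ``$\mathrm{otp}(C_\alpha)<\omega_1$ when $\mathrm{cf}(\alpha)=\omega$'' is automatic, an ordinal $\le\omega_1$ of countable cofinality being $<\omega_1$. Second, in a coherent sequence with all order types $\le\omega_1$, every limit point of every $C_\beta$ lies at a position $<\omega_1$ inside $C_\beta$ and therefore has cofinality $\omega$; hence the only members of a $C_\beta$ that can have cofinality $\omega_1$ are \emph{isolated} points of $C_\beta$, and any ordinal of cofinality $\omega_1$ is a limit point of no member of the sequence at all, so it carries no coherence obligation ``from above''. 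The upshot is that it suffices to produce a coherent sequence $\langle C_\alpha:\alpha<\omega_2\rangle$ (with $C_0=\emptyset$, $C_{\gamma+1}=\{\gamma\}$, and the order-type and cofinality clauses read at the limit points of each $C_\alpha$) in which every $C_\alpha$ is a club, $\mathrm{otp}(C_\alpha)\le\omega_1$, and no member of $C_\alpha$ has cofinality $\omega_1$.

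For the order-type bound I would appeal to the standard fact (see, e.g., \cite{walks}) that over any model of $\square_{\omega_1}$ there is a coherent $C$-sequence $\langle C_\alpha:\alpha<\omega_2\rangle$ with $C_\alpha$ club in each limit $\alpha$ and $\mathrm{otp}(C_\alpha)\le\omega_1$: it is obtained from a $\square_{\omega_1}$-sequence by thinning each $C_\alpha$ of order type $>\omega_1$ around the coherence-determined point at which a tail of $C_\alpha$ first has order type $\le\omega_1$, the thinnings being mutually consistent precisely because those cut-points are determined by coherence and so agree along any descending chain of clubs. So we may assume we already have such a sequence, with the coordinates $\alpha\le\omega_1$ and the successor coordinates filled in the evident coherent way.

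It remains to remove members of cofinality $\omega_1$. Call $\alpha$ \emph{benign} if $\{x\in C_\alpha:\mathrm{cf}(x)\neq\omega_1\}$ is cofinal in $\alpha$; this is automatic whenever $\mathrm{cf}(\alpha)=\omega_1$, since an ``all-$\omega_1$-cofinal'' tail of $C_\alpha$ would consist of isolated points and hence have order type $\le\omega$. For benign $\alpha$ put $C_\alpha^\ast:=\{x\in C_\alpha:\mathrm{cf}(x)\neq\omega_1\}$; by the second observation this is still a club of order type $\le\omega_1$ containing every limit point of $C_\alpha$, and since the same points are excised from $C_\alpha$ and from $C_\beta$ whenever $C_\beta\cap\alpha=C_\alpha$ --- and since every limit point of $C_\alpha^\ast$ is again benign --- coherence among the benign coordinates survives. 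If $\alpha$ is not benign then necessarily $\mathrm{cf}(\alpha)=\omega$ and $C_\alpha$ has an all-$\omega_1$-cofinal tail above $\tau_\alpha:=\sup\{x\in C_\alpha:\mathrm{cf}(x)\neq\omega_1\}<\alpha$; here I would discard $C_\alpha$ and take $C_\alpha^\ast$ to be any cofinal $\omega$-sequence of successor ordinals in $(\tau_\alpha,\alpha)$, which is automatically a club of order type $\omega$ with members of cofinality $1$. The key point --- and the step I expect to be the only real obstacle --- is that such a non-benign $\alpha$ is a limit point of no $C_\beta^\ast$: if $\alpha$ were a limit point of the original $C_\beta$, then $C_\beta$ and $C_\alpha$ agree below $\alpha$, so below $\alpha$ the modified $C_\beta^\ast$ is either bounded below $\alpha$ (if $\beta$ is benign) or a finite initial segment of an $\omega$-sequence (if $\beta$ is not), and in neither case does it accumulate at $\alpha$. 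Consequently the ad hoc choice at non-benign limits is unconstrained from above and, having no limit points, imposes nothing below; running all the excisions simultaneously by recursion on $\alpha$, with the inductive hypothesis that the clubs built so far are coherent, have order types $\le\omega_1$, and omit ordinals of cofinality $\omega_1$, yields the required sequence. The routine part is the benign deletion together with the two automatic clauses; the care goes into verifying that the global coherence constraint never actually clashes with the forced replacement at the non-benign limits.
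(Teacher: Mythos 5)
The paper gives no proof of this proposition at all---it is dispatched as a ``standard argument''---so your write-up cannot be matched against anything; judged on its own it is correct, and it supplies exactly the details the paper omits. Your two organizing observations carry the construction: once the order types are $\le\omega_1$, coherence forces every limit point of every $C_\beta$ to sit at a countable position and hence to have cofinality $\omega$, so the cofinality-$\omega_1$ members are isolated and removable; a closed set of isolated points has order type $\le\omega$, so every ordinal of cofinality $\omega_1$ is benign in your sense; and the substitute $\omega$-sequences at the non-benign coordinates have no limit points below their suprema, so they impose no coherence obligations. The case analysis showing that a non-benign $\alpha$ is never a limit point of any modified club (bounded intersection when $\beta$ is benign, finite intersection when it is not) is precisely the point that needed checking, and you check it. The one inaccurate spot is your parenthetical sketch of the preliminary normalization to order types $\le\omega_1$: a club whose order type is additively indecomposable, say $\omega_1\cdot\omega_1$, has \emph{no} tail of order type $\le\omega_1$, so ``thinning to the first tail of order type $\le\omega_1$'' is not how that reduction goes. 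The usual argument instead defines $D_\alpha$ by recursion on $\mathrm{otp}(C_\alpha)$, taking $D_\alpha=C_\alpha$ when $\mathrm{otp}(C_\alpha)\le\omega_1$ and otherwise $D_\alpha=\pi_\alpha[D_{\mathrm{otp}(C_\alpha)}]$ for $\pi_\alpha$ the order isomorphism of $\mathrm{otp}(C_\alpha)$ onto $C_\alpha$, with coherence of the $D_\alpha$ following from coherence of the $C_\alpha$ and of the $\pi_\alpha$. Since you invoke that normalization as a cited known fact rather than relying on your sketch of it, this is a blemish rather than a gap, but the sketch as written would not survive scrutiny.
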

We only consider $\square_{\omega_1}$-sequences which have the properties 
mentioned in the proposition above. We will also use the following standard fact.

\begin{fact}
Assume $\lambda$ is a regular cardinal  which is not Mahlo in $\textsc{L}$. 
Let $G \subset \textrm{coll}(\omega_1 , < \lambda)$ be $\textsc{L}$-generic.
Then $\square_{\omega_1}$ holds in $\textsc{L}[G]$.
\end{fact}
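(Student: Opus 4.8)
The plan is to build the required $\square_{\omega_1}$-sequence directly in $\textsc{L}[G]$ out of two ingredients: the canonical fine-structural square sequences of $\textsc{L}$, which will handle every index that is singular in $\textsc{L}$, and clubs of order type $\omega_1$ read off from the generic $G$, which will handle the indices that are regular in $\textsc{L}$ but are singularized by the collapse. Throughout I would use that $\textrm{coll}(\omega_1,<\lambda)$ is countably closed, so that $\omega_1^{\textsc{L}[G]}=\omega_1^{\textsc{L}}$ and no new $\omega$-sequences appear, and that it forces $|\alpha|=\omega_1$ for every $\alpha<\lambda$, so that $\lambda=\omega_2^{\textsc{L}[G]}$ and every limit $\alpha$ with $\omega_1<\alpha<\lambda$ that is regular in $\textsc{L}$ now satisfies $\textrm{cf}(\alpha)=\omega_1$ (it cannot acquire cofinality $\omega$, by countable closure).

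First I would cash in the hypothesis that $\lambda$ is not Mahlo in $\textsc{L}$. Working in $\textsc{L}$, where $\mathrm{GCH}$ holds, every regular cardinal below $\lambda$ is either a successor cardinal or an inaccessible. The successor cardinals are disjoint from the club of limit cardinals, hence non-stationary; and the inaccessibles below $\lambda$ are non-stationary as well (this is immediate when $\lambda$ is a successor cardinal, since they are then bounded, and is exactly the failure of Mahloness when $\lambda$ is a limit cardinal). Their union, the set of regular cardinals below $\lambda$, is therefore non-stationary, so I can fix in $\textsc{L}$ a club $E\subseteq\lambda$ of limit ordinals, with $\min(E)>\omega_1$, every element of which is singular in $\textsc{L}$. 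This $E$ is the skeleton along which the two kinds of clubs are glued: all the $\textsc{L}$-regular cardinals of the interval $(\omega_1,\lambda)$ live in the gaps $\lambda\setminus E$.

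Next I would invoke Jensen's construction in $\textsc{L}$: a coherent sequence $\Seq{C^{\textsc{L}}_\alpha:\alpha<\lambda\textrm{ limit}}$ assigning to each $\textsc{L}$-singular $\alpha$ a club $C^{\textsc{L}}_\alpha\subseteq\alpha$ with $\textrm{otp}(C^{\textsc{L}}_\alpha)<\alpha$, arranged (by relativizing to $E$) so that the limit points of each $C^{\textsc{L}}_\alpha$ are again singular in $\textsc{L}$. Since closedness, unboundedness, order type, and the coherence relation are all absolute between $\textsc{L}$ and $\textsc{L}[G]$, these clubs pass unchanged into $\textsc{L}[G]$ and already realize the $\square_{\omega_1}$ conditions at every $\textsc{L}$-singular index. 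For an $\textsc{L}$-regular $\alpha\in(\omega_1,\lambda)$ I would instead use $G$: from the generic collapsing map of $\alpha$ onto $\omega_1$ one reads a continuous increasing cofinal map $e_\alpha:\omega_1\to\alpha$ and sets $c_\alpha=\range(e_\alpha)$, a club of order type $\omega_1<\alpha$. The sequence is then $C_\alpha=C^{\textsc{L}}_\alpha$ at $\textsc{L}$-singular indices and $C_\alpha=c_\alpha$ at $\textsc{L}$-regular indices; in either case $\textrm{otp}(C_\alpha)<\alpha$ holds, so the order-type clause of the definition is met and Proposition \ref{square} can afterwards be applied.

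The main obstacle is coherence, namely that $\beta\in C_\alpha$ being a limit point forces $C_\beta=C_\alpha\cap\beta$, in particular across the boundary between the two regimes. Because the $\textsc{L}$-square has been thinned so that its clubs have only $\textsc{L}$-singular limit points, no $C^{\textsc{L}}_\alpha$ ever has an $\textsc{L}$-regular limit point, and coherence among the singular indices is inherited verbatim from $\textsc{L}$. The genuinely delicate point is that a limit point $\beta$ of a generic club $c_\alpha$ may again be an $\textsc{L}$-regular cardinal of countable cofinality in $\textsc{L}[G]$, so the clubs $c_\alpha$ at the various $\textsc{L}$-regular indices must be chosen coherently with one another, i.e. so that $c_\alpha\cap\beta=c_\beta$ whenever such a $\beta$ occurs, and so that their $\textsc{L}$-singular limit points receive the matching $C^{\textsc{L}}$-club. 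I expect this coherent choice to be the heart of the argument, and the way to secure it is to read the maps $e_\alpha$ off $G$ uniformly, exploiting that $\textrm{coll}(\omega_1,<\lambda)$ factors as a product whose restriction below each $\textsc{L}$-regular $\alpha$ is itself a collapse, and to arrange by a density argument that $\mathbb{1}$ forces the resulting system of clubs to thread coherently. Granting this, $\Seq{C_\alpha}$ witnesses $\square_{\omega_1}$ in $\textsc{L}[G]$.
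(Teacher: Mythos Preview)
The paper records this as a standard fact and offers no proof, so there is nothing in the paper to compare against. Your outline has the right shape --- Jensen's global square at $\textsc{L}$-singular indices, something new at the $\textsc{L}$-regular ones, glued along a club $E$ of singulars --- and you correctly isolate coherence across the two regimes as the crux. But your proposed resolution does not work, and part of it rests on an error.

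The error: you worry that a limit point $\beta$ of $c_\alpha$ ``may again be an $\textsc{L}$-regular cardinal of countable cofinality in $\textsc{L}[G]$''. This is impossible. Since $c_\alpha$ has order type $\omega_1$, every proper limit point $\beta<\alpha$ of $c_\alpha$ has countable cofinality in $\textsc{L}[G]$; but the Levy collapse is countably closed, so every $\textsc{L}$-regular $\beta>\omega_1$ retains cofinality $\omega_1$ in $\textsc{L}[G]$. Hence all limit points of $c_\alpha$ below $\alpha$ are $\textsc{L}$-singular of countable $\textsc{L}$-cofinality, and the regular-to-regular coherence you flag never arises. The genuine obstacle is the other one you name: at each such $\textsc{L}$-singular limit point $\beta$ you must have $c_\alpha\cap\beta=C^{\textsc{L}}_\beta$, and a club read off the generic collapsing surjection will not satisfy this. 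The countable set $C^{\textsc{L}}_\beta$ is fixed in $\textsc{L}$ independently of $G$; given any condition one can extend it so that the induced $c_\alpha$ contains an ordinal below $\beta$ not in $C^{\textsc{L}}_\beta$, so the desired equality fails generically already at a single $\beta$. No density argument repairs this, because you are asking the generic to hit $\omega_1$ many predetermined targets simultaneously. The way forward is not to extract $c_\alpha$ from the collapse but to choose it in $\textsc{L}[G]$ as a thread through the global-square system --- a club whose every initial segment at a limit point coincides with the global-square club already there --- and the existence of such a thread of order type $\omega_1$ at each $\textsc{L}$-regular $\alpha$ is a further fact about Jensen's $\textsc{L}$-construction (this is where the club $E$ and non-Mahloness do the real work), not a consequence of density over the Levy collapse.
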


Now we briefly review some definitions and facts about walks on ordinals, from sections 
7.3, 7.4, and 7.5 of \cite{walks} unless otherwise is mentioned.
We fix a $\square_{\omega_1}$-sequence 
$\Seq{C_\alpha: \alpha \in \omega_2}$ which satisfies the properties in Proposition \ref{square}.

We will use the following notation in the rest of the paper.
For all $X$, $\alpha_X = \sup (X \cap \omega_2)$.
For each $\alpha \in \omega_2$ we let $L_\alpha$ be the set of all $\beta \in \omega_2$ 
such that $\alpha \in \lim (C_\beta)$.
For each $\alpha < \beta $ in $\omega_2$, 
let $\Lambda (\alpha, \beta)$ be the maximal limit point of 
$C_\beta \cap (\alpha + 1 )$ when such a limit point exists, otherwise 
$\Lambda(\alpha, \beta) = 0$.
\begin{defn}[See section 7.3 in \cite{walks}]
The function $\rho : [\omega_2]^2 \longrightarrow \omega_1$ is defined recursively as follows: for 
$\alpha < \beta$, 
\begin{center}
$\rho(\alpha, \beta ) = \max \{ \textrm{otp}(C_\beta \cap \alpha), 
\rho(\alpha, \min (C_\beta \setminus \alpha)), 
\rho(\xi , \alpha)  : \xi \in C_\beta \cap [\Lambda (\alpha, \beta), \alpha ) \}$.
\end{center}
We define $\rho(\alpha, \alpha)= 0$ for all $\alpha \in \omega_2$. 
When the order between $\alpha, \beta$ is not known we use $\rho\{ \alpha, \beta \}$ instead of $\rho(\alpha, \beta)$. More precisely, $\rho \{\alpha , \beta \}=\rho (\alpha, \beta)$ if $\alpha \leq \beta$ and 
$\rho \{\alpha , \beta \}=\rho (\beta, \alpha)$ if $\beta \leq \alpha$.
\end{defn}
\begin{lem}[Lemma 7.3.6 of \cite{walks}]
Assume $\xi \in \alpha$ and $\alpha$ is a limit point of $C_\beta$. 
Then $\rho(\xi,\alpha) = \rho(\xi, \beta).$
\end{lem}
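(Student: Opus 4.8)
The plan is to prove this by induction on $\beta$, following the standard template for establishing properties of $\rho$. Fix $\xi < \alpha$ with $\alpha$ a limit point of $C_\beta$. By the coherence property of the $\square_{\omega_1}$-sequence (the last clause of Proposition \ref{square}), $\alpha \in \lim(C_\beta)$ gives $C_\beta \cap \alpha = C_\alpha$. I want to compare the recursive clauses defining $\rho(\xi,\beta)$ and $\rho(\xi,\alpha)$ and check they produce the same value. Unwinding the definition of $\rho(\xi,\beta)$: it is the maximum of $\operatorname{otp}(C_\beta \cap \xi)$, the term $\rho(\xi, \min(C_\beta \setminus \xi))$, and the terms $\rho(\eta, \xi)$ for $\eta \in C_\beta \cap [\Lambda(\xi,\beta),\xi)$.

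The first step is to observe that $\Lambda(\xi,\beta) = \Lambda(\xi,\alpha)$: this is the maximal limit point of $C_\beta \cap (\xi+1)$, and since $\xi < \alpha$ we have $C_\beta \cap (\xi+1) = C_\alpha \cap (\xi+1)$, so the maximal limit points coincide. Likewise $\min(C_\beta \setminus \xi) = \min(C_\alpha \setminus \xi)$, again because $\xi < \alpha$ forces this minimum to lie in $C_\beta \cap \alpha = C_\alpha$ (here one uses that $\xi < \alpha$ and $\alpha$ is a limit point of $C_\beta$, so there are elements of $C_\beta$ strictly between $\xi$ and $\alpha$, hence the minimum is below $\alpha$). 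From these two identities, the second and third clauses in the recursive definitions of $\rho(\xi,\beta)$ and $\rho(\xi,\alpha)$ are literally the same expressions (note $C_\beta \cap [\Lambda(\xi,\beta),\xi) = C_\alpha \cap [\Lambda(\xi,\alpha),\xi)$), so the only possible discrepancy is between $\operatorname{otp}(C_\beta \cap \xi)$ and $\operatorname{otp}(C_\alpha \cap \xi)$ — but these are equal since $C_\beta \cap \xi = C_\alpha \cap \xi$. Hence $\rho(\xi,\beta) = \rho(\xi,\alpha)$ directly from the recursion, and in fact no induction hypothesis is even needed beyond the syntactic matching of the clauses, since the recursive subcalls appearing are identical expressions. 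I would still phrase it carefully, invoking well-foundedness of the recursion (the subcalls $\rho(\xi, \min(C_\beta\setminus\xi))$ and $\rho(\eta,\xi)$ are on strictly smaller ordinals, so equality of the defining expressions yields equality of the values).

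The main subtlety — the step I would be most careful about — is the degenerate cases in the matching of $\Lambda$ and $\min(C_\beta\setminus\xi)$: one must make sure $\xi$ is genuinely below $\alpha$ and not equal to it (given), and handle the possibility that $\xi \in C_\beta$ itself, in which case $\min(C_\beta\setminus\xi)=\xi$ and $\rho(\xi,\xi)=0$ on both sides, and the interval $[\Lambda(\xi,\beta),\xi)$ contributes its terms identically; also the case where $C_\beta\cap(\xi+1)$ has no limit point, so $\Lambda(\xi,\beta)=0$, which again transfers verbatim to $\alpha$. None of these is hard, but writing them cleanly is where the work is. I expect the whole argument to be short once the coherence identity $C_\beta\cap\alpha = C_\alpha$ is invoked at the outset.
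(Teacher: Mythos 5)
Your argument is correct: the paper itself gives no proof of this lemma (it is quoted from \cite{walks}), and your derivation is the standard one, resting on the coherence identity $C_\beta\cap\alpha=C_\alpha$ for $\alpha\in\lim(C_\beta)$, which makes the three clauses in the recursive definitions of $\rho(\xi,\alpha)$ and $\rho(\xi,\beta)$ literally identical (same $\operatorname{otp}$ term, same $\min(C_\beta\setminus\xi)=\min(C_\alpha\setminus\xi)<\alpha$, same $\Lambda$ and hence the same set of subcalls), so the values agree with no induction needed. The degenerate cases you flag ($\xi\in C_\beta$, no limit point below $\xi$) are handled correctly.
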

\begin{lem}[Lemma 7.3.11 of \cite{walks}] \label{cofinal_sequence}
If $\alpha < \beta$, $\alpha $ is a limit ordinal such that there is a cofinal sequence of $\xi \in \alpha$,
with $\rho(\xi, \beta) \leq \nu$
then $\rho(\alpha, \beta) \leq \nu$.
\end{lem}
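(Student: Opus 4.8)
The plan is to prove the lemma by induction on $\beta$. Fix a limit ordinal $\alpha<\beta$, set $S=\{\xi<\alpha:\rho(\xi,\beta)\le\nu\}$, which is cofinal in $\alpha$ by hypothesis, and unfold the recursive definition:
\[
\rho(\alpha,\beta)=\max\Big(\bigl\{\operatorname{otp}(C_\beta\cap\alpha),\ \rho(\alpha,\min(C_\beta\setminus\alpha))\bigr\}\cup\bigl\{\rho(\xi,\alpha):\xi\in C_\beta\cap[\Lambda(\alpha,\beta),\alpha)\bigr\}\Big),
\]
the middle term being $0$ when $\alpha\in C_\beta$. I would bound each of the three kinds of terms on the right by $\nu$. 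The first term is easy: since $\rho(\xi,\beta)\ge\operatorname{otp}(C_\beta\cap\xi)$ for every $\xi$ and $\operatorname{otp}(C_\beta\cap\cdot)$ is monotone, cofinality of $S$ forces $\operatorname{otp}(C_\beta\cap\xi)\le\nu$ for all $\xi<\alpha$, whence $\operatorname{otp}(C_\beta\cap\alpha)=\sup_{\xi<\alpha}\operatorname{otp}(C_\beta\cap\xi)\le\nu$ because $\alpha$ is a limit.

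For the middle term I may assume $\alpha\notin C_\beta$, so that $\beta'=\min(C_\beta\setminus\alpha)$ satisfies $\alpha<\beta'<\beta$. Putting $\delta=\sup(C_\beta\cap\alpha)<\alpha$, every $\xi\in(\delta,\alpha)$ has $\min(C_\beta\setminus\xi)=\beta'$, so inspecting the first step of the walk from $\beta$ to $\xi$ gives $\rho(\xi,\beta')\le\rho(\xi,\beta)$; hence $S\cap(\delta,\alpha)$ witnesses that cofinally many $\xi<\alpha$ satisfy $\rho(\xi,\beta')\le\nu$, and the induction hypothesis applied to $\beta'<\beta$ yields $\rho(\alpha,\beta')\le\nu$.

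The remaining — and main — point is to bound $\rho(\xi,\alpha)$ for $\xi\in C_\beta\cap[\Lambda(\alpha,\beta),\alpha)$. If $\alpha\in\lim(C_\beta)$ then $\Lambda(\alpha,\beta)=\alpha$ and this set is empty, so $\rho(\alpha,\beta)=\operatorname{otp}(C_\beta\cap\alpha)\le\nu$ and we are done; this is where the preceding lemma (equivalently, the coherence $C_\alpha=C_\beta\cap\alpha$) enters. Otherwise $C_\beta\cap\alpha$ is bounded below $\alpha$ and the set above is finite; fix one of its elements $\xi$, let $\beta=\beta_0>\beta_1>\dots>\beta_n=\alpha$ be the walk from $\beta$ to $\alpha$, and choose $\xi^*\in S$ above $\delta$ and above each $\sup(C_{\beta_i}\cap\alpha)$ that is $<\alpha$ — possible cofinally often in $\alpha$ since there are only finitely many $\beta_i$. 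For such $\xi^*$ the walk from $\beta$ to $\xi^*$ first visits $\beta_0,\dots,\beta_{n-1}$ and then either passes through $\alpha$ (when $\alpha\notin\lim(C_{\beta_{n-1}})$) or descends from $\beta_{n-1}$ exactly as the walk from $\alpha$ to $\xi^*$ does, in which case $\rho(\xi^*,\beta_{n-1})=\rho(\xi^*,\alpha)$ by the preceding lemma. Either way, unfolding the recursion for $\rho(\xi^*,\beta)$ down to $\beta_{n-1}$ exhibits both $\rho(\xi^*,\alpha)$ and $\rho(\xi,\xi^*)$ as constituents of $\rho(\xi^*,\beta)$ — the latter because $C_\beta\cap[\Lambda(\xi^*,\beta),\xi^*)=C_\beta\cap[\Lambda(\alpha,\beta),\alpha)$ contains $\xi$ — so $\rho(\xi^*,\alpha)\le\nu$ and $\rho(\xi,\xi^*)\le\nu$. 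The subadditivity inequality $\rho(\xi,\alpha)\le\max\{\rho(\xi,\xi^*),\rho(\xi^*,\alpha)\}$ of \cite{walks} then gives $\rho(\xi,\alpha)\le\nu$, completing the induction.

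I expect this last step to be the main obstacle. The first two terms reduce transparently to the hypothesis and the induction hypothesis, but $\rho(\xi,\alpha)$ is governed by the walk through $C_\alpha$, about which the hypothesis says nothing once $\alpha\notin\lim(C_\beta)$; bridging this gap forces one to analyze exactly how the walk from $\beta$ reaches $\alpha$ — in particular whether $\alpha$ is a limit point of the last ordinal it visits before $\alpha$ — and to invoke both the coherence lemma preceding the statement and the subadditivity of $\rho$ from \cite{walks}.
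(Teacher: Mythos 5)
The paper does not prove this lemma --- it is quoted from \cite{walks} without proof --- so there is no in-paper argument to compare against; your proof is correct and follows the standard route. The decomposition into the three constituents of the recursion, the induction on $\beta$ to handle $\rho(\alpha,\min(C_\beta\setminus\alpha))$, and the control of the terms $\rho(\xi,\alpha)$ via a well-chosen $\xi^*\in S$ together with coherence and subadditivity is essentially how the closedness of $\{\xi:\rho(\xi,\beta)\le\nu\}$ is established in \cite{walks}.
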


\begin{lem}[ Lemma 7.3.8 of \cite{walks}] \label{countable_preimage}
For all $\nu \in \omega_1$ and $\alpha \in \omega_2$, the set 
$\{ \xi \in \alpha : \rho(\xi, \alpha ) \leq \nu \}$ is countable.
\end{lem}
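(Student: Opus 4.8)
Planning the proof of Lemma \ref{countable_preimage} (the final statement).

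The plan is to prove this by induction on $\alpha \in \omega_2$, closely paralleling the recursive structure of the definition of $\rho$. Fix $\nu \in \omega_1$; I want to show $P_\nu(\alpha) := \{\xi \in \alpha : \rho(\xi,\alpha) \le \nu\}$ is countable for every $\alpha$. The base case $\alpha = 0$ is trivial, and the successor case is routine since adding one point changes nothing essential. So the real work is the limit case, and there the key is to exploit the clause $\rho(\alpha,\beta) = \rho(\alpha,\min(C_\beta\setminus\alpha))$-style recursion together with the $\square_{\omega_1}$-coherence of the fixed sequence $\Seq{C_\alpha : \alpha \in \omega_2}$.

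First I would handle the case where $\mathrm{cf}(\alpha) = \omega$. Pick an increasing cofinal sequence $\Seq{\alpha_n : n \in \omega}$ in $\alpha$; by thinning I may assume each $\alpha_n$ lies in (or above) the relevant tail of $C_\alpha$. For $\xi \in P_\nu(\alpha)$ with $\xi < \alpha$, let $n$ be least with $\xi < \alpha_n$. Walking from $\alpha$ toward $\xi$, the first step lands at $\min(C_\alpha \setminus \xi) \le \alpha_n$, and the recursion forces $\rho(\xi, \min(C_\alpha\setminus\xi)) \le \rho(\xi,\alpha) \le \nu$, while also every $\eta \in C_\alpha \cap [\Lambda(\xi,\alpha),\xi)$ satisfies $\rho(\eta,\xi) \le \nu$ — but the point I actually need is subtler: I want to bound $\xi$ by reducing to some $P_\nu(\gamma)$ with $\gamma < \alpha$. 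The clean way is: $\rho(\xi,\alpha) \le \nu$ implies $\mathrm{otp}(C_\alpha \cap \xi) \le \nu$, so $\xi$ is among the first $\nu$-many elements past any given point of $C_\alpha$; since $\nu$ is countable and $C_\alpha$ has order type $\le \omega_1$, combined with coherence this pins $\xi$ into a countable set. More precisely, using that $\rho$ restricted to pairs below $\alpha_n$ equals $\rho$ computed with the $C$'s below $\alpha_n$ (via the coherence lemma $\rho(\xi,\alpha) = \rho(\xi,\beta)$ when $\xi < \alpha \in \lim(C_\beta)$, applied to the successive limit points), each $\xi \in P_\nu(\alpha) \cap \alpha_n$ is either $\ge$ the largest limit point $\Lambda$ of $C_\alpha$ below $\alpha_n$ — and there are only $\le\nu$-many such $\xi$ since $\mathrm{otp}(C_\alpha \cap \xi) \le \nu$ — or else $\xi < \Lambda$, in which case by coherence $C_\alpha \cap \Lambda = C_\Lambda$ with $\Lambda < \alpha$, and $\rho(\xi,\Lambda) = \rho(\xi,\alpha) \le \nu$, so $\xi \in P_\nu(\Lambda)$ which is countable by the induction hypothesis. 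Taking the union over $n$ and over the finitely-or-countably-many relevant $\Lambda$'s keeps everything countable.

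For the case $\mathrm{cf}(\alpha) = \omega_1$: then $\alpha \notin C_\beta$ for any $\beta$ by the third clause of Proposition \ref{square}, but that is about $\alpha$ sitting inside others, not what I need. Here I use that $C_\alpha$ is club in $\alpha$ of order type $\omega_1$, so its limit points $\Lambda$ form a club, and every $\xi < \alpha$ lies below some limit point $\Lambda(\xi) \in \lim(C_\alpha)$ with $\Lambda(\xi) < \alpha$; by coherence $C_{\Lambda(\xi)} = C_\alpha \cap \Lambda(\xi)$ and by the coherence lemma $\rho(\xi,\Lambda(\xi)) = \rho(\xi,\alpha)$, hence $P_\nu(\alpha) \subseteq \bigcup_{\Lambda \in \lim(C_\alpha)} P_\nu(\Lambda)$. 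But this union has $\omega_1$-many terms, so I cannot conclude directly — I must instead observe that $\rho(\xi,\alpha) \le \nu$ forces $\mathrm{otp}(C_\alpha \cap \xi) \le \nu < \omega_1$, so $\xi$ lies below the $\nu$-th element of $C_\alpha$, call it $\delta_\nu < \alpha$ (using $\mathrm{otp}(C_\alpha) = \omega_1$). Thus $P_\nu(\alpha) \subseteq \delta_\nu$, and picking any limit point $\Lambda^* \in \lim(C_\alpha)$ with $\delta_\nu < \Lambda^* < \alpha$ we get $P_\nu(\alpha) \subseteq P_\nu(\Lambda^*)$ by coherence, which is countable by induction. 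The main obstacle I anticipate is getting the bookkeeping of the recursion exactly right in the $\mathrm{cf}(\alpha)=\omega$ case — specifically verifying that $\rho(\xi,\alpha) \le \nu$ genuinely entails both $\mathrm{otp}(C_\alpha\cap\xi)\le\nu$ and the reduction $\rho(\xi,\Lambda)\le\nu$ at the appropriate limit point $\Lambda$ — since this is where the recursive $\max$ and the maximal-limit-point term $\Lambda(\alpha,\beta)$ interact, and one has to be careful that the step $\min(C_\alpha\setminus\xi)$ is itself $\le \nu$-th in $C_\alpha$ so that it too lands in a controlled region. Everything else is a routine union-of-countably-many-countable-sets argument.
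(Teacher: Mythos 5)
The paper gives no proof of this lemma --- it is quoted from \cite{walks} --- so your argument can only be judged on its own terms. Your induction scheme is the right one, and your treatment of the case $\mathrm{cf}(\alpha)=\omega_1$ is correct: $\rho(\xi,\alpha)\ge\mathrm{otp}(C_\alpha\cap\xi)$ forces $\xi$ below a fixed point $\delta_\nu<\alpha$ because $\mathrm{otp}(C_\alpha)=\omega_1$, and then coherence at a limit point $\Lambda^*\in\lim(C_\alpha)$ above $\delta_\nu$ reduces everything to $P_\nu(\Lambda^*)$ and the induction hypothesis. The base and successor cases are indeed routine.

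The case $\mathrm{cf}(\alpha)=\omega$, however, contains a genuine error. You dispose of the ordinals $\xi\in[\Lambda,\alpha_n)$ (with $\Lambda$ the largest limit point of $C_\alpha$ below $\alpha_n$) by asserting that ``there are only $\le\nu$-many such $\xi$ since $\mathrm{otp}(C_\alpha\cap\xi)\le\nu$.'' That inference is false: the condition $\mathrm{otp}(C_\alpha\cap\xi)\le\nu$ bounds how many points of $C_\alpha$ lie below $\xi$, not how many $\xi$ there are; the set $\{\xi:\mathrm{otp}(C_\alpha\cap\xi)\le\nu\}$ is an initial segment of $\alpha$ which, when $\mathrm{otp}(C_\alpha)\le\nu$ (perfectly possible here, e.g.\ $\mathrm{otp}(C_\alpha)=\omega$), is all of $\alpha$ and hence possibly uncountable. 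In that situation $\lim(C_\alpha)$ may even be empty, so the ``bad'' region you are trying to count is the whole of $P_\nu(\alpha)$ and your argument proves nothing. The repair is the inequality you yourself wrote down and then set aside: from the definition of $\rho$, $\rho(\xi,\alpha)\ge\rho(\xi,\min(C_\alpha\setminus\xi))$, so every $\xi\in P_\nu(\alpha)$ lies in $C_\alpha\cup\bigcup_{\eta\in C_\alpha}P_\nu(\eta)$; since $\mathrm{cf}(\alpha)=\omega$ gives $\mathrm{otp}(C_\alpha)<\omega_1$ by Proposition \ref{square}, this is a countable union of sets that are countable by the induction hypothesis (each $\eta\in C_\alpha$ being $<\alpha$), and no decomposition into the $\alpha_n$'s or appeal to limit points is needed at all. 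Your closing worry about whether $\min(C_\alpha\setminus\xi)$ ``lands in a controlled region'' is a symptom of the same confusion: no control on its position in $C_\alpha$ is required, only that it is an ordinal below $\alpha$ to which the induction hypothesis applies.
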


\begin{lem}[Lemma 7.3.7 of \cite{walks}]
Assume $\alpha \leq \beta \leq \gamma$. Then 
\begin{itemize}
\item $\rho(\alpha, \gamma) \leq \max \{ \rho(\alpha , \beta), \rho(\beta, \gamma) \}$, 
\item $\rho(\alpha, \beta) \leq \max \{ \rho(\alpha, \gamma), \rho(\beta , \gamma) \}$.
\end{itemize}
\end{lem}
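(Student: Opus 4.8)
The plan is to establish the two displayed inequalities \emph{simultaneously}, by induction on $\gamma$ regarded as the largest of the three ordinals: assuming both inequalities for every triple whose top element is strictly below $\gamma$, I verify them for an arbitrary $\alpha\le\beta\le\gamma$. The cases $\alpha=\beta$ and $\beta=\gamma$ are trivial, so fix $\alpha<\beta<\gamma$. Since $C_\gamma$ is cofinal in $\gamma$, both $\min(C_\gamma\setminus\alpha)$ and $\min(C_\gamma\setminus\beta)$ lie below $\gamma$, so every appeal to the induction hypothesis below is to a triple whose top element is $<\gamma$. Write $\Lambda=\Lambda(\beta,\gamma)$ for the largest limit point of $C_\gamma$ that is $\le\beta$; the proof splits into three cases according to the position of $\Lambda$. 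If $\Lambda=\beta$, i.e. $\beta\in\lim(C_\gamma)$, the coherence lemma recalled above — namely $\rho(\xi,\delta)=\rho(\xi,\eta)$ whenever $\xi<\delta\in\lim(C_\eta)$ — gives $\rho(\alpha,\beta)=\rho(\alpha,\gamma)$, so both inequalities are immediate. If $\alpha<\Lambda<\beta$, the same lemma gives $\rho(\alpha,\gamma)=\rho(\alpha,\Lambda)$; since $\Lambda\in C_\gamma\cap[\Lambda(\beta,\gamma),\beta)$ the definition of $\rho(\beta,\gamma)$ yields $\rho(\Lambda,\beta)\le\rho(\beta,\gamma)$, and feeding these two facts into the induction hypothesis for the triple $\alpha\le\Lambda\le\beta$ produces both desired inequalities.

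The substantive case is $\Lambda\le\alpha$: there is then no limit point of $C_\gamma$ in $(\alpha,\beta]$, so $\Lambda(\alpha,\gamma)=\Lambda(\beta,\gamma)=\Lambda$, and the recursions for $\rho(\alpha,\gamma)$ and $\rho(\beta,\gamma)$ both refer only to the common segment $C_\gamma\cap[\Lambda,\beta)$. For the first inequality I would expand $\rho(\alpha,\gamma)$ through its definition and bound each of its entries by $\max\{\rho(\alpha,\beta),\rho(\beta,\gamma)\}$: the entry $\operatorname{otp}(C_\gamma\cap\alpha)$ is at most $\operatorname{otp}(C_\gamma\cap\beta)\le\rho(\beta,\gamma)$; an entry $\rho(\xi,\alpha)$ with $\xi\in C_\gamma\cap[\Lambda,\alpha)$ is handled by the induction hypothesis on $(\xi,\alpha,\beta)$ together with $\rho(\xi,\beta)\le\rho(\beta,\gamma)$ (read off the definition of $\rho(\beta,\gamma)$, as $\xi\in C_\gamma\cap[\Lambda,\beta)$); and the entry $\rho(\alpha,g)$ with $g=\min(C_\gamma\setminus\alpha)$, which is $0$ if $g=\alpha$, is dealt with by splitting on whether $g\le\beta$ — then use the induction hypothesis on $(\alpha,g,\beta)$ and $\rho(g,\beta)\le\rho(\beta,\gamma)$ — or $g>\beta$, in which case $g=\min(C_\gamma\setminus\beta)$ and one uses the induction hypothesis on $(\alpha,\beta,g)$ together with the fact that $\rho(\beta,g)$ is one of the entries of the maximum defining $\rho(\beta,\gamma)$.

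For the second inequality in this case the key observation is that if $\alpha\in C_\gamma$ then, as $\Lambda\le\alpha<\beta$, the value $\rho(\alpha,\beta)$ is \emph{literally} one of the entries of the maximum defining $\rho(\beta,\gamma)$, so $\rho(\alpha,\beta)\le\rho(\beta,\gamma)$ and we are done. If $\alpha\notin C_\gamma$, put $g=\min(C_\gamma\setminus\alpha)>\alpha$: when $g\le\beta$ the induction hypothesis on $(\alpha,g,\beta)$ gives $\rho(\alpha,\beta)\le\max\{\rho(\alpha,g),\rho(g,\beta)\}$, and when $g>\beta$ (so that $g=\min(C_\gamma\setminus\beta)$) the induction hypothesis on $(\alpha,\beta,g)$ gives $\rho(\alpha,\beta)\le\max\{\rho(\alpha,g),\rho(\beta,g)\}$; in either case $\rho(\alpha,g)$ is one of the entries of the maximum defining $\rho(\alpha,\gamma)$ while $\rho(g,\beta)$ (respectively $\rho(\beta,g)$) is one of the entries of the maximum defining $\rho(\beta,\gamma)$, so both sides are bounded by $\max\{\rho(\alpha,\gamma),\rho(\beta,\gamma)\}$. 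The step I expect to be the main obstacle — and the reason the case analysis is organized around $\Lambda(\beta,\gamma)$ — is precisely that $\rho(\alpha,\beta)$ is computed from the club $C_\beta$, which has no a priori relation to $C_\gamma$; the case split guarantees that whenever $\beta$ is not trapped strictly above $\alpha$ by a limit point of $C_\gamma$ one never unfolds $\rho(\alpha,\beta)$ directly, but routes it, via the induction hypothesis, through a point $g$ lying on the walk from $\gamma$, where everything is governed by $C_\gamma$.
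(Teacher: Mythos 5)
Your proof is correct. The paper gives no argument of its own for this lemma --- it is quoted from \cite{walks} --- and your simultaneous induction on the top ordinal, with the case split organized around the position of $\Lambda(\beta,\gamma)$ and the coherence property $\rho(\xi,\delta)=\rho(\xi,\eta)$ for $\delta\in\lim(C_\eta)$, is essentially the standard proof found there.
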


The following lemma can be obtained in the same way as Lemma 3.1.3 of \cite{walks}.

\begin{lem}\cite{walks}\label{bluered}
Assume $\alpha < \beta < \gamma$. We have $\rho(\alpha, \gamma) = \rho(\alpha, \beta)$, 
if $\rho(\beta , \gamma) < \max \{\rho(\alpha, \beta),\rho(\alpha, \gamma)  \}$.
\end{lem}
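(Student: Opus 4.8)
The plan is to derive this purely from the two subadditivity inequalities in the lemma immediately preceding it, without going back to the recursive definition of $\rho$. That is, for $\alpha\le\beta\le\gamma$ I may use both $\rho(\alpha,\gamma)\le\max\{\rho(\alpha,\beta),\rho(\beta,\gamma)\}$ and $\rho(\alpha,\beta)\le\max\{\rho(\alpha,\gamma),\rho(\beta,\gamma)\}$, together with the hypothesis $\rho(\beta,\gamma)<\max\{\rho(\alpha,\beta),\rho(\alpha,\gamma)\}$. Everything then reduces to a short case split on which of $\rho(\alpha,\beta)$ and $\rho(\alpha,\gamma)$ attains that maximum.

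First I would treat the case where the maximum equals $\rho(\alpha,\gamma)$, so $\rho(\alpha,\beta)\le\rho(\alpha,\gamma)$ and, by hypothesis, $\rho(\beta,\gamma)<\rho(\alpha,\gamma)$. If $\rho(\alpha,\beta)$ were strictly below $\rho(\alpha,\gamma)$, then $\max\{\rho(\alpha,\beta),\rho(\beta,\gamma)\}<\rho(\alpha,\gamma)$, contradicting the first subadditivity inequality; hence $\rho(\alpha,\beta)=\rho(\alpha,\gamma)$. Next I would treat the case where the maximum equals $\rho(\alpha,\beta)$, so $\rho(\alpha,\gamma)\le\rho(\alpha,\beta)$ and $\rho(\beta,\gamma)<\rho(\alpha,\beta)$. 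Here the second subadditivity inequality gives $\rho(\alpha,\beta)\le\max\{\rho(\alpha,\gamma),\rho(\beta,\gamma)\}$, and since $\rho(\beta,\gamma)<\rho(\alpha,\beta)$ this forces $\rho(\alpha,\gamma)\ge\rho(\alpha,\beta)$, hence equality again. Since the two cases are exhaustive, $\rho(\alpha,\gamma)=\rho(\alpha,\beta)$ in all of them.

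I do not expect a genuine obstacle here once the preceding subadditivity lemma is available; the only subtlety is that the hypothesis is a strict inequality against the \emph{maximum} of the two values $\rho(\alpha,\beta)$ and $\rho(\alpha,\gamma)$ rather than against a single one of them, and it is exactly this that lets both branches of the case analysis close. A self-contained argument straight from the recursion is also possible — one would induct on $\gamma$ and compare the parameters $\Lambda(\alpha,\gamma)$ and $\Lambda(\beta,\gamma)$ while tracking how the walk from $\gamma$ down to $\alpha$ passes through $\beta$ — but that is precisely the bookkeeping already absorbed into the subadditivity lemma, so I would not pursue it.
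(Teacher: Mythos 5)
Your argument is correct: the paper states this lemma without proof (it is quoted from \cite{walks}), and your derivation from the two subadditivity inequalities of the preceding lemma is exactly the standard one. Both branches of the case split are sound — in each, the strict bound on $\rho(\beta,\gamma)$ forces the relevant maximum in the subadditivity inequality to be attained by the other term, closing the inequality to an equality — and the two cases exhaust the hypothesis, so nothing further is needed.
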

\begin{proof}
We only prove that if $\rho(\alpha, \gamma) > \rho(\beta, \gamma)$ then 
$\rho(\alpha, \gamma) = \rho(\alpha, \beta)$.
The other half of the statement can be proved by similar argument.
$\rho(\alpha, \beta) \leq \max \{ \rho (\alpha , \gamma), \rho (\beta, \gamma) \} = \rho(\alpha , \gamma)$. On the other hand, 
$\rho(\alpha, \gamma) \leq \max \{ \rho(\alpha, \beta), \rho(\beta, \gamma)\}$
and $\rho(\alpha, \gamma) > \rho(\beta, \gamma)$.
So $\rho(\alpha, \gamma) \leq \rho (\alpha, \beta)$.
And this finishes the proof.
\end{proof}

\begin{lem}[Lemma 7.3.10 of \cite{walks}] \label{limit_in_trace}
Assume $\beta \in \lim(\omega_2)$, and $\gamma > \beta$. Then there is $\beta' \in \beta$ 
such that for all $\alpha \in (\beta', \beta)$, $\rho(\alpha, \gamma) \geq \rho(\alpha, \beta)$.
\end{lem}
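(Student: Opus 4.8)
The plan is to analyze the walk from $\gamma$ down to $\alpha$ for $\alpha$ just below $\beta$, and to locate $\beta$ (or a limit point of $C_\gamma$ very close to $\beta$) on the trace. First I would apply the previous lemma (that $\rho(\xi,\alpha')=\rho(\xi,\beta)$ whenever $\alpha'$ is a limit point of $C_{\beta}$ and $\xi<\alpha'$) to reduce matters to understanding how $\rho(\alpha,\gamma)$ relates to a value computed against something in the trace $\mathrm{Tr}(\alpha,\gamma)$ that sits above $\alpha$ but below or equal to $\beta$. Concretely, write $\beta^{*}=\min(C_\gamma\setminus\beta)$ and, more importantly, let $\eta=\Lambda(\beta,\gamma)$ be the maximal limit point of $C_\gamma\cap(\beta+1)$. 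There are two cases depending on whether $\eta=\beta$ or $\eta<\beta$.

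If $\eta=\beta$, i.e. $\beta$ itself is a limit point of $C_\gamma$, then by the lemma just cited $\rho(\alpha,\gamma)=\rho(\alpha,\beta)$ for every $\alpha<\beta$, so any $\beta'<\beta$ works. If $\eta<\beta$, the point is that the ordinals of $C_\gamma$ lying in the half-open interval $[\eta,\beta)$ form a \emph{finite} set — this is where I use that $\eta$ is the largest limit point of $C_\gamma$ below $\beta+1$, so above $\eta$ the set $C_\gamma$ has order type a successor below the next limit point, hence is finite in $[\eta,\beta)$. List these finitely many points as $\xi_0<\dots<\xi_k$ (all $<\beta$), and set $\beta'=\sup(\eta,\xi_k)<\beta$ (taking $\beta'=\eta$ if the list is empty, or the max with $\eta$ otherwise). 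Now fix any $\alpha\in(\beta',\beta)$. Because $\alpha>\xi_k$ and $\alpha\ge\eta$, the value $\Lambda(\alpha,\gamma)$ equals $\eta$ and $C_\gamma\cap(\alpha+1)$ beyond $\eta$ is empty, so the recursion for $\rho(\alpha,\gamma)$ contributes the term $\rho(\alpha,\min(C_\gamma\setminus\alpha))$, and $\min(C_\gamma\setminus\alpha)=\beta^{*}\ge\beta$. Unwinding one step and using $\square_{\omega_1}$-coherence (if $\beta$ happened to be a limit point of $C_{\beta^{*}}$ we could even get equality via the cited lemma, but in general we only need the inequality), the recursive definition gives $\rho(\alpha,\gamma)\ge\rho(\alpha,\beta^{*})$ and then $\rho(\alpha,\beta^{*})\ge\rho(\alpha,\beta)$, again by feeding $\beta$ into the recursion for the walk from $\beta^{*}$ and noting $\beta\le\beta^{*}$ with $C_{\beta^{*}}\cap\beta$ an initial segment whose order type bounds $\mathrm{otp}(C_\beta)$ from the coherence clause. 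Chaining these inequalities yields $\rho(\alpha,\gamma)\ge\rho(\alpha,\beta)$ for all $\alpha\in(\beta',\beta)$, as required.

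The main obstacle I anticipate is the bookkeeping in the case $\eta<\beta$: one must be careful that for $\alpha$ in the chosen tail $(\beta',\beta)$ the \emph{first} step of the walk from $\gamma$ toward $\alpha$ lands exactly on $\beta^{*}=\min(C_\gamma\setminus\beta)$ with no intermediate steps, and then that the remaining walk from $\beta^{*}$ down to $\alpha$ dominates the walk from $\beta$ down to $\alpha$ termwise — this is essentially a sub-additivity/monotonicity fact already packaged in the earlier $\rho$-lemmas (e.g. $\rho(\alpha,\beta)\le\max\{\rho(\alpha,\gamma),\rho(\beta,\gamma)\}$ and its companions), so the right move is to cite those rather than re-derive the recursion by hand. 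A clean alternative, which I would actually write up, is to argue purely from the already-established inequalities: since $\rho(\alpha,\beta)\le\max\{\rho(\alpha,\gamma),\rho(\beta,\gamma)\}$, it suffices to choose $\beta'$ so large that $\rho(\alpha,\beta)>\rho(\beta,\gamma)$ for all $\alpha\in(\beta',\beta)$; but $\rho(\beta,\gamma)$ is a \emph{fixed} countable ordinal $\nu$, and by Lemma \ref{countable_preimage} the set $\{\alpha<\beta:\rho(\alpha,\beta)\le\nu\}$ is countable, hence bounded below $\beta$ since $\beta$ is a limit ordinal. Taking $\beta'$ above that bound forces $\rho(\alpha,\beta)>\nu\ge\rho(\beta,\gamma)$, whence $\max\{\rho(\alpha,\gamma),\rho(\beta,\gamma)\}=\rho(\alpha,\gamma)$ and the displayed inequality becomes $\rho(\alpha,\beta)\le\rho(\alpha,\gamma)$, which is exactly the conclusion. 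This second route avoids the trace analysis entirely and is the version I would present.
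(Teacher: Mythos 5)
The paper does not actually prove this lemma (it is quoted from \cite{walks}), so your argument has to stand on its own, and the version you say you would actually write up --- the ``clean alternative'' via subadditivity --- has a genuine gap. You set $\nu=\rho(\beta,\gamma)$ and want $\beta'<\beta$ with $\rho(\alpha,\beta)>\nu$ for all $\alpha\in(\beta',\beta)$, citing Lemma \ref{countable_preimage} to say that $\{\alpha<\beta:\rho(\alpha,\beta)\le\nu\}$ is countable, ``hence bounded below $\beta$ since $\beta$ is a limit ordinal.'' That last inference is false when $\mathrm{cf}(\beta)=\omega$: a countable subset of such a $\beta$ can perfectly well be cofinal in it. The lemma is stated for arbitrary limit $\beta<\omega_2$, and the paper applies it precisely to ordinals of countable cofinality (in Lemma \ref{rho} it is used with $\beta=\alpha_N=\sup(N\cap\omega_2)$ for a countable $N$), so this is not a removable edge case; the subadditivity route only yields the conclusion when $\mathrm{cf}(\beta)=\omega_1$.

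Your first route is the right idea but is also incomplete at one point. After choosing $\beta'$ so that for $\alpha\in(\beta',\beta)$ the walk from $\gamma$ toward $\alpha$ first visits $\beta^*=\min(C_\gamma\setminus\beta)$, you correctly read off $\rho(\alpha,\gamma)\ge\rho(\alpha,\beta^*)$ from the recursion; but the next inequality $\rho(\alpha,\beta^*)\ge\rho(\alpha,\beta)$ is not obtained by ``feeding $\beta$ into the recursion for the walk from $\beta^*$'' --- the walk from $\beta^*$ down to $\alpha$ need not pass anywhere near $\beta$, and the order type of $C_{\beta^*}\cap\beta$ is irrelevant here. What you need at that point is exactly the statement of the lemma for the pair $(\beta,\beta^*)$ with $\beta^*<\gamma$, so the argument should be organized as a transfinite induction on $\gamma$ (equivalently, on the length of the walk from $\gamma$ to $\beta$): the case $\beta\in\lim(C_\gamma)$ is handled by the coherence lemma exactly as you say, the case $\beta^*=\beta$ is immediate, and otherwise one applies the induction hypothesis to $(\beta,\beta^*)$ and takes the larger of the two resulting bounds below $\beta$. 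With that restructuring your first argument is correct; as written, neither version closes.
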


\begin{lem}[Lemma 7.4.7 of \cite{walks}]\label{cc}
Assume $A$ is an uncountable family of finite subsets of $\omega_2$ and $\nu \in \omega_1$. 
Then there is an uncountable $B \subset A$ such that $B$ forms a $\Delta$-system with root 
$r$ and for all $a, b$ in $B$:
\begin{itemize}
\item $\rho \{ \alpha , \beta \} > \nu$ for all $\alpha \in a \setminus b$ and $\beta \in  b \setminus a$,
\item $\rho \{ \alpha , \beta \} \geq \min \{ \rho (\alpha, \gamma), \rho(\beta , \gamma)\}$
for all $\alpha \in a \setminus b $, $\beta \in b \setminus a $ and $\gamma \in a \cap b$.
\end{itemize}
\end{lem}

The following forcing is from the proof of Theorem 7.5.9 in \cite{walks}.
\begin{defn}\cite{walks}\label{Q}
Assume $A \subset \omega_2$.
$Q_A$ is the poset consisting of all finite functions $p$ such that the following holds.
\begin{enumerate}
\item $\dom(p) \subset A$. 
\item \label{level condition}
For all $\alpha \in \dom(p)$, $p(\alpha) \in [\omega_1]^{< \omega}$ such that 
for all $\nu \in \omega_1$, $p(\alpha) \cap [\nu, \nu + \omega )$ has at most one element.
\item \label{initial segment condition}
For all $\alpha, \beta$ in $\dom(p)$, $p(\alpha) \cap p(\beta)$ is 
an initial segment of both $p(\alpha)$ and $p(\beta)$.
\item \label{delta condition}
For all $\alpha < \beta$ in $\dom(p)$, $\max (p(\alpha) \cap p(\beta)) < \rho(\alpha , \beta)$ or $p(\alpha) \cap p(\beta) = \emptyset$.
\end{enumerate}
We let $q \leq p$ if $\dom(p) \subset \dom(q)$ and $\forall \alpha \in \dom(p)$, $p(\alpha) \subset q(\alpha)$.
We use $Q$ in order to refer to $Q_{\omega_2}.$
The poset $Q_c$ consists of all conditions $p$ in $Q$ with the additional condition that 
for all $\alpha \in \dom(p)$, $\textrm{cf}(\alpha) \leq \omega$. 
\end{defn}

\begin{defn}
Assume $G$ is generic for $Q$. Then for each $\xi \in \omega_2,$ $b_\xi =\bigcup \{ p(\xi) : p \in G \}$.
\end{defn}

Recall that a poset $P$ satisfies the \emph{Knaster condition}  if every uncountable subset $A$ of $P$
contains an uncountable subset $B$ such that the elements of $B$ are pairwise compatible.
Note that Knaster condition is stronger than ccc.
Moreover,  if $P$ satisfies the Knaster condition then it does not add new cofinal branches to $\omega_1$-trees 
and its iteration with any  ccc poset is ccc. 
\begin{prop}[Theorem 7.5.9. in \cite{walks}] 
The poset $Q$ satisfies the Knaster condition.
\end{prop}

We finish this section by some simple observations regarding the poset $Q$.
Let $G \subset Q$ be generic. 
For $t \in s \in \omega_1$, we let $t < s$ if there is $\alpha \in \omega_2$ 
and $p \in G$ such that $\alpha \in \dom(p)$ and $t, s$ are in $p(\alpha)$.
By Condition \ref{initial segment condition} of Definition \ref{Q}, $<$ is transitive 
and $T = (\omega_1, <)$ forms a tree.
Also note that for all $\alpha \in \omega_2$ and $\nu \in \{ 0 \} \cup \lim(\omega_1)$
the set of all conditions
 $q \in Q$ such that $ \alpha \in \dom(q) \textrm{ and } q(\alpha) \cap [ \nu , \nu + \omega) \neq \emptyset$ is a dense subset of $Q$.
So for each $\alpha \in \omega_2$ and $ \nu \in \{0 \} \cup \lim(\omega_1)$, 
$|b_\alpha \cap [ \nu , \nu + \omega)| = 1$.
This means that for each $\alpha \in \omega_2$, $b_\alpha$ is a maximal uncountable 
branch of $T$.
Similar arguments show that if $s \neq t$ have limit heights in $T$ 
then they have different sets of predecessors.
In particular $T$ is normal.

Moreover, it is easy to see that for all $t \in T$, the set of all $q \in Q$ such that 
for some $\alpha \in \dom (q)$, $ t \in q(\alpha)$ forms a dense subset 
of $Q$. 
This means that for each $t \in T$ there is $\alpha \in \omega_2$ such that
$t \in b_\alpha$.
Therefore, for each $\nu \in \{ 0 \} \cup \lim (\omega_1)$, the set 
$[\nu, \nu + \omega)$ is a level of the tree $T$.
In particular $T$ is an  $\omega_1$-tree whose levels are countable sets that are in the ground model.

\section{Complete Suborders of Q}

When we analyze subtrees of the generic tree $T$, which is added by $Q$, 
it will be useful to know if there is a complete suborder of $Q$ which adds the tree $T$ but does not add certain branches.
In this section we will find some subsets of $Q$ which are complete suborders of it.

\begin{lem}
The poset $Q_c$ is a complete suborder of $Q$. 
Moreover, if $X \subset \omega_2$ is a set of ordinals of cofinality 
$\omega_1$, then $Q_{\omega_2 \setminus X}$ 
is a complete suborder of $Q$. 
\end{lem}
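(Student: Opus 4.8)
**Plan for proving that $Q_c$ is a complete suborder of $Q$, and that $Q_{\omega_2 \setminus X}$ is a complete suborder when $X$ consists of ordinals of cofinality $\omega_1$.**

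The plan is to verify the two standard requirements for "complete suborder" for a subposet $Q_A \subseteq Q$ with $A$ downward-absolute in the relevant sense: first that incompatibility is absolute (two conditions in $Q_A$ that are compatible in $Q$ are compatible in $Q_A$), and second that every $p \in Q$ has a reduction (projection) $p' \in Q_A$ such that every extension of $p'$ inside $Q_A$ is compatible with $p$ in $Q$. The key mechanism for both is a \emph{restriction map}: given $p \in Q$ set $p \rest A$ to be $p$ with domain cut down to $\dom(p) \cap A$. Conditions (\ref{level condition})--(\ref{delta condition}) of Definition \ref{Q} are all ``local'' — they quantify over pairs of points in the domain — so they are automatically inherited by any restriction of the domain; hence $p \rest A \in Q_A$ whenever $p \in Q$. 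The same observation shows that if $p, q \in Q_A$ are compatible via some $r \in Q$, then $r \rest (\dom(p)\cup\dom(q)) $ already lies in $Q_A$ and witnesses compatibility in $Q_A$; so incompatibility is absolute for \emph{any} $A$.

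The substantive point is the reduction property, and this is where the hypotheses on $A$ enter. I would show: if $p \in Q$ and $q \leq p \rest A$ with $q \in Q_A$, then $p$ and $q$ are compatible in $Q$. The natural candidate for a common extension is $r = q \cup (p \rest (\dom(p)\setminus A))$, i.e.\ glue $q$ together with the part of $p$ living outside $A$, leaving $p$'s values unchanged there. One must check the four clauses of Definition \ref{Q} for $r$. Clauses (\ref{level condition}) and (\ref{initial segment condition}) restricted to pairs both in $A$ follow from $q \in Q_A$, and restricted to pairs both outside $A$ from $p \in Q$; the only new content is clause (\ref{delta condition}) and clause (\ref{initial segment condition}) for a ``mixed'' pair $\alpha \in \dom(p)\setminus A$, $\beta \in \dom(q)$. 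Here is the crux: $q$ may have enlarged the value at $\beta \in A$ beyond what $p$ had there, so I must argue that this enlargement does not create a clash with the untouched value $p(\alpha)$ at an $\alpha \notin A$. Since $\alpha \in \dom(p \rest A)$ is false but $\alpha \in \dom(p)$, and $q \leq p\rest A$, what is controlled is only the interaction of $q$ with points of $A$; the interaction with $\alpha \notin A$ is new.

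This is exactly where cofinality is used. For $Q_c$: if $\alpha \in \dom(p)$ has $\mathrm{cf}(\alpha) = \omega_1$, then by Lemma \ref{countable_preimage} the set $\{\xi < \alpha : \rho(\xi,\alpha) \leq \nu\}$ is countable for each $\nu$, so it is bounded below $\alpha$; pushing $\nu$ up and using continuity/regularity one shows that for a tail of $\xi < \alpha$ the value $\rho(\xi,\alpha)$ exceeds any prescribed bound — in particular, shrinking $\dom(p)$ first so that every element of $A \cap \dom(p)$ below such an $\alpha$ has $\rho$-value to $\alpha$ larger than $\max(p(\alpha))$ (after possibly extending $p$ to enlarge this finite set of relevant points, or simply observing the relevant finitely many pairs $(\xi,\alpha)$ with $\xi \in \dom(r)\cap\alpha\cap A$ all have large $\rho$-value once we note that $q$ was obtained as an extension and hence its ``new'' elements at such $\xi$ sit high up), clause (\ref{delta condition}) $\max(p(\alpha)\cap q(\beta)) \le \rho(\alpha,\beta)$ is forced because the intersection is an initial segment (clause (\ref{initial segment condition})) and can be arranged to be small. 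For the $X$ of cofinality-$\omega_1$ case the roles of $A = \omega_2\setminus X$ and its complement are reversed: now the ``outside'' points are those in $X$, all of cofinality $\omega_1$, and the same $\rho$-largeness-at-points-of-high-cofinality phenomenon, via Lemma \ref{countable_preimage}, keeps the $\Delta$-clause intact. I would write the argument uniformly: identify the finite set of pairs $(\alpha,\beta)$ with $\alpha,\beta$ on opposite sides of $A$ that are threatened, note the side with cofinality $\omega_1$ element $\gamma$ has only countably, hence boundedly, many $\rho$-predecessors below any $\nu$, and conclude the needed inequalities. The main obstacle is precisely the bookkeeping in this mixed-pair verification — making sure the initial-segment clause (\ref{initial segment condition}) and the $\Delta$-clause (\ref{delta condition}) for $r$ both hold simultaneously, which may require first replacing $p$ by a stronger condition (a legitimate move, since to show $Q_A$ is \emph{complete} it suffices to find reductions, and one is free to reduce a denser set of $p$'s) so that the finitely many relevant $\rho$-values are above the finitely many relevant ordinals in the $p(\alpha)$'s.
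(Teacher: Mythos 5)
You correctly identify the two things to check (absoluteness of incompatibility, which your restriction argument does handle, and the reduction property), and you correctly locate the crux in the mixed pairs. But your reduction map is the bare restriction $p \mapsto p \rest A$, and that is where the argument genuinely fails: $p \rest A$ forgets the coordinates of $p$ outside $A$ entirely, so an extension $q \leq p \rest A$ inside $Q_A$ is free to destroy compatibility with $p$ in a way no later bookkeeping can repair. Concretely, take $\alpha \in \dom(p)$ with $\textrm{cf}(\alpha)=\omega_1$, let $t=\max(p(\alpha))>0$, and take $\gamma=\min(C_\alpha)$, which has countable cofinality by Proposition \ref{square} and satisfies $\rho(\gamma,\alpha)=0<t$. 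If $t\notin\bigcup\range(p\rest A)$ and $\gamma\notin\dom(p)$, then $q=(p\rest A)\cup\{(\gamma,\{t\})\}$ is a condition of $Q_c$ extending $p\rest A$; yet any common extension $r$ of $q$ and $p$ would have $t\in r(\gamma)\cap r(\alpha)$, so $\max(r(\gamma)\cap r(\alpha))\geq t>\rho(\gamma,\alpha)$, violating condition \ref{delta condition} of Definition \ref{Q}. Hence $p\rest A$ is not a reduction of $p$. Your appeal to Lemma \ref{countable_preimage} only shows that the bad $\xi$ (those with $\rho(\xi,\alpha)$ small) form a bounded set below $\alpha$; it does not prevent $q$ from placing a new coordinate (or enlarging an old coordinate $\beta\in\dom(p)\cap A$ with $\rho(\beta,\alpha)$ small) inside that bounded set, and strengthening $p$ first cannot help because the restricted condition does not see $\alpha$ or $p(\alpha)$ at all.

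The missing idea is that the reduction must retain a \emph{surrogate} for each discarded coordinate. The paper's proof replaces each $\beta\in\dom(q)$ of cofinality $\omega_1$ by an ordinal $\beta'$ of countable cofinality chosen to be a limit point of $C_\beta$, lying strictly above $\dom(q)\cap\beta$, with $\rho(\beta',\eta)=\rho(\beta,\eta)$ for the remaining $\eta\in\dom(q)\setminus\beta$, and with $\textrm{otp}(C_{\beta'})>\max(q(\beta))$; the reduction carries the value $q(\beta)$ at $\beta'$. Coherence of the square sequence gives $\rho(\xi,\beta')=\rho(\xi,\beta)$ for all $\xi<\beta'$, the order-type requirement handles $\xi\in[\beta',\beta)$, and Lemma \ref{bluered} handles $\xi>\beta$; so every constraint that an extension in $Q_c$ must respect at the guard $\beta'$ transfers back to $\beta$, and the common extension is obtained by copying an initial segment of the value at $\beta'$ back to $\beta$. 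Without such a guard coordinate the reduction property fails, so your plan needs this additional construction rather than more careful bookkeeping on the mixed pairs; the same remark applies to the $Q_{\omega_2\setminus X}$ case.
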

\begin{proof}
We only prove the first part of the lemma. The second part can be verified by a similar argument.
Assume $q \in Q$. We will show that there is $q' \in Q_c$ such that for all extensions $p \leq q'$ in $Q_c$,
the conditions $p,q$ are compatible.
Without loss of generality we can assume that $q$ has the following extra property: For all  $\xi < \eta$ in $\dom (q)$
there are distinct $m, n$ in $\omega$ such that $\max(q(\xi) \cap q(\eta)) + \omega + m \in q(\xi)$ and 
$\max(q(\xi) \cap q(\eta)) + \omega + n \in q(\eta)$.
In particular, $q$ decides $\max(b_\xi \cap b_\eta)$ in the generic tree.

Assume $\{ \beta_i : i \in n \}$ is the increasing enumeration
of all ordinals in $\dom(q)$ which have cofinality $\omega_1$. 
Also let $C$ be the set of all ordinals in $\dom(q)$ which have countable cofinality.
Define $\beta'_i$, for each $i \in n$, to be the least ordinal $\xi$ such that:
\begin{enumerate}
\item $\xi$ is a limit point of $C_{\beta_i}$, 
\item $\xi$  is strictly above all elements of $\dom(q) \cap \beta_i$,
\item for all $\alpha \in \dom(q) \setminus \beta_i$, $\rho(\xi, \alpha) \geq \rho(\beta_i, \alpha)$,
\item $\textrm{otp}(C_\xi) > \max(q(\beta_i))$
\end{enumerate}
Note that the third requirement can easily be arranged by Lemma \ref{countable_preimage}. 
Let  $q'$ be the condition in $Q_c$ such that $\dom(q') = C \cup \{ \beta'_i: i \in n \}$, 
$q'(\alpha) = q(\alpha) $ for all $\alpha \in C$, and $q'(\beta'_i) = q(\beta_i)$ for each $i \in n$.
It is easy to see that $q' \in Q_c$. 

Now let $p< q'$ be in $Q_c$. 
Let $r$ be the condition in $Q$ such that:
\begin{enumerate}
\item $\dom(r) = \dom(p) \cup \{ \beta_i: i \in n \}$, 
\item $r(\alpha) = p(\alpha)$ for each $\alpha \in \dom(p)$, and
\item $r(\beta_i) = p(\beta'_i) \cap (\max(q(\beta_i))+1)$.
\end{enumerate} 
 
It is easy to see that $r$ is a common extension of $p $ and $q$, 
provided that it is in $Q$.
We only show that  Condition \ref{delta condition}, of Definition \ref{Q} holds for $r$.
Assume $\alpha \in \dom(p)$ and $\beta $ is one of the $\beta_i$'s. 
If $\alpha < \beta'$ then $\rho(\alpha, \beta') = \rho(\alpha, \beta)$. Therefore,
$\max (r(\alpha) \cap r(\beta)) \leq \max (p(\alpha) \cap p(\beta'))< \rho(\alpha, \beta') = \rho (\alpha, \beta)$, 
which was desired.
If $\beta' \leq \alpha < \beta $, then  
$$\rho(\alpha, \beta) \geq \textrm{otp}(C_{\beta'}) > \max(q(\beta)) = \max (r(\beta)) \geq \max (r(\beta) \cap r(\alpha)).$$
If $\beta< \alpha$ note that by lemma \ref{bluered} 
either $\rho(\beta, \alpha) \geq \rho(\beta', \alpha)$ or
$\rho(\beta', \beta) = \rho(\beta', \alpha)$.
In the first case there is nothing to show, and for the second case 
we have $\rho(\beta', \alpha) =\rho(\beta', \beta) = \textrm{otp}(C_{\beta'}) > \max(q(\beta))$.

Now assume that $\alpha < \beta$ are both in $\{\beta_i: i \in n \}$.
Then  $$\max (r(\alpha) \cap r(\beta)) \leq \max (p (\alpha') \cap p(\beta'))=\max (q'(\alpha') \cap q'(\beta')).$$
Here the inequality is obvious. The equality follows from the facts that $q'(\beta_i') = q(\beta_i)$, for each $i \in n$,
and $q$ satisfies the extra property in the beginning of the proof. Moreover,
$$\max (q'(\alpha') \cap q'(\beta'))=\max (q(\alpha) \cap q(\beta)) < \rho (\alpha, \beta).$$
This assures us that $r$ satisfies condition \ref{delta condition} of Definition \ref{Q}.
\end{proof}
It is well known that if there is a ccc poset $P$ which adds a branch $b$ to an $\omega_1$-tree $U$,
then $\{ u \in U: \exists p \in P, p\Vdash u \in \dot{b} \}$ is a Souslin subtree of $U$.
Here, $Q$ is a ccc poset and $Q_c$ is a complete suborder of $Q$. Moreover,
if $G \subset Q$ is generic then $G \cap Q_c$ knows the generic  tree $T$.
Since there is a ccc poset $R$ such that $Q$ is equivalent to $Q_c * \dot{R}$,
$T$ has lots of Souslin subtrees in any extension by $Q_c$.
This leads to the following corollary.
In the next section we prove a stronger statement which we will use to 
prove a fact about $\rho$. 
For now, this corollary helps us to have a better picture of the forcing $Q$.
\begin{cor}
The generic tree for $Q_c$ has Souslin subtrees.
\end{cor}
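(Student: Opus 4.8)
The plan is to deduce this from the results just established. By the previous lemma, $Q_c$ is a complete suborder of $Q$, so if $G \subset Q$ is $\textsc{V}$-generic then $G_c := G \cap Q_c$ is $\textsc{V}$-generic for $Q_c$, and in $\textsc{V}[G]$ there is a quotient poset $R = Q/G_c$ with $\textsc{V}[G] = \textsc{V}[G_c][H]$ for the induced generic $H \subset R$. The key point is that $R$ is (forced by $Q_c$ to be) ccc: this follows because $Q$ is ccc (indeed Knaster) and a quotient of a ccc poset by a complete suborder is ccc in the intermediate extension — any uncountable antichain in $R$ in $\textsc{V}[G_c]$ would, via names, yield an uncountable antichain in $Q$ in $\textsc{V}$.

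Next I would observe that $G_c$ already determines the generic tree $T$. Indeed the underlying set of $T$ is just $\omega_1$ with a tree order, and for each $\alpha \in \omega_2$ the branch $b_\alpha = \bigcup\{p(\alpha) : p \in G\}$ is read off from the whole of $Q$; but the tree $T$ itself — its levels and its ordering — is definable from the $\square_{\omega_1}$-sequence and the $\rho$ function in the ground model together with the data of which branches get realized, and this is arranged so that $T \in \textsc{V}[G_c]$. (This is exactly the content of the sentence ``if $G \subset Q$ is generic then $G \cap Q_c$ knows the generic tree $T$'' in the excerpt, which I take as given from the construction in \cite{walks}.) So $T \in \textsc{V}[G_c]$ is a Kurepa (in particular normal $\omega_1$-) tree, and in the further ccc extension $\textsc{V}[G_c][H] = \textsc{V}[G]$ new cofinal branches of $T$ are added, namely the branches $b_\alpha$ for $\alpha$ of cofinality $\omega_1$, which lie outside $\textsc{V}[G_c]$ since $Q_{\omega_2 \setminus X}$ (for $X$ the set of such $\alpha$) is a proper complete suborder.

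Finally I would invoke the well-known fact quoted just before the corollary: if a ccc poset $P$ over a model $W$ adds a cofinal branch $b$ to an $\omega_1$-tree $U \in W$, then $S = \{u \in U : \exists p \in P,\ p \Vdash u \in \dot b\}$ is a Souslin subtree of $U$ lying in $W$. Applying this with $W = \textsc{V}[G_c]$, $P = R$, $U = T$, and $b = b_\alpha$ for some fixed $\alpha$ of cofinality $\omega_1$ realized in the generic, we obtain a Souslin subtree $S \subseteq T$ with $S \in \textsc{V}[G_c]$. Hence the generic tree $T$ for $Q_c$ has a Souslin subtree in $\textsc{V}[G_c]$, which is the assertion of the corollary.

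The main obstacle is justifying cleanly that $T \in \textsc{V}[G_c]$ and that $R = Q/G_c$ genuinely adds a \emph{new} cofinal branch to $T$ (so that the quoted fact produces a genuinely Souslin, hence uncountable-but-ccc, subtree rather than a trivial one). The first is handled by the preceding lemma plus the explicit description of $Q$ from \cite{walks}; the second follows from the second half of that lemma, since the branch $b_\alpha$ for $\mathrm{cf}(\alpha) = \omega_1$ cannot belong to the extension by the complete suborder $Q_{\omega_2 \setminus X}$, and a fortiori not to the extension by $Q_c$.
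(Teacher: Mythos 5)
Your proposal is correct and follows essentially the same route as the paper: the paper's justification is precisely the paragraph preceding the corollary, which factors $Q$ as $Q_c * \dot{R}$ with $\dot R$ ccc, notes that $G \cap Q_c$ already determines $T$, and applies the standard fact that a ccc forcing adding a new cofinal branch to an $\omega_1$-tree yields a Souslin subtree in the intermediate model. The only point you gloss (that $b_\alpha \notin \textsc{V}[G_c]$ for $\mathrm{cf}(\alpha)=\omega_1$, which needs a small genericity argument rather than just the complete-suborder statement) is left equally implicit in the paper.
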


\begin{lem} \label{projection}
Assume $\CH$. Let $\Seq{N_\xi : \xi \in \omega_1}$ 
be a continuous $\in$-chain of countable elementary 
submodels of $H_\theta$ where $\theta$ is a regular large enough cardinal, 
$N_{\omega_1} = \bigcup_{\xi \in \omega_1} N_\xi$, and
$\mu = \sup (N_{\omega_1} \cap \omega_2)$.
Then $Q_\mu$ is a complete suborder of $Q$.
\end{lem}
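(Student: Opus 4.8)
The plan is to verify the two defining conditions of a complete suborder: that incompatibility in $Q_\mu$ agrees with incompatibility in $Q$, and that every $q \in Q$ has a reduction in $Q_\mu$. The first is immediate, because clauses (1)--(4) of Definition \ref{Q} are finitary and refer only to pairs of coordinates, so the restriction of any $Q$-condition to coordinates below $\mu$ is a $Q_\mu$-condition; consequently the order on $Q_\mu$ is just the induced one, and if $p,q \in Q_\mu$ have a common extension $r$ in $Q$, then $r \rest \mu$ is a common extension in $Q_\mu$. So the entire content lies in producing reductions.

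Fix $q \in Q$ and write $\dom(q) \setminus \mu = \{\gamma_1 < \dots < \gamma_k\}$. The first step is a stabilization fact for the walk at $\mu$. Since $\mu$ has cofinality $\omega_1$ and, by the conventions on our $\square_{\omega_1}$-sequence (Proposition \ref{square}), no ordinal of uncountable cofinality lies in any $C_\beta$, the point $\mu$ is never a limit point of a $C_\beta$, and $C_\beta \cap \mu$ is bounded in $\mu$ whenever $\beta > \mu$; iterating $\beta \mapsto \min(C_\beta \setminus \mu)$ from any $\gamma_i$ therefore descends through finitely many ordinals above $\mu$ and must stop at $\mu+1$, where $C_{\mu+1} = C_\mu$. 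Unwinding the recursion defining $\rho$ along this finite descent, and using Lemma \ref{countable_preimage} to absorb the countably many $\alpha < \mu$ at which the (finitely much) walk-data could exceed $\rho(\alpha,\mu)$, one obtains for each $i$ an ordinal $\mu_i < \mu$ with
\[ \rho(\alpha,\gamma_i) = \rho(\alpha,\mu) \qquad \text{for all } \alpha \in (\mu_i,\mu), \]
a sharpening of Lemma \ref{limit_in_trace} at the particular point $\mu$. Now invoke $\CH$ and the continuity of the $\in$-chain: writing $\mu_\xi = \sup(N_\xi \cap \omega_2)$, the sequence $\Seq{\mu_\xi : \xi \in \omega_1}$ is continuous and increasing with supremum $\mu$, hence a club in $\mu$, so club-many $\mu_\xi$ are limit points of $C_\mu$; for such $\xi$ the model $N_\xi$ contains $\rho$ and $\Seq{C_\alpha : \alpha \in \omega_2}$ and, by elementarity, reflects the relevant properties of the walk below $\mu_\xi$. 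Choose $\xi_1 < \dots < \xi_k$ in $\omega_1$ so that, with $\zeta_i := \mu_{\xi_i}$, each $\zeta_i$ is a limit point of $C_\mu$ lying above $\mu_i$, above $\sup(\dom(q) \cap \mu)$ and above $\zeta_{i-1}$, and so that $\textrm{otp}(C_{\zeta_i})$ exceeds $\max q(\gamma_i)$, $\rho(\mu,\gamma_i)$ and every other relevant $\rho$-value attached to the finitely many parameters of $q$ and $\gamma_i$ (possible since $\textrm{otp}(C_\mu) = \omega_1$ and $\textrm{otp}(C_\mu \cap \zeta)$ runs cofinally in $\omega_1$ over limit points $\zeta$ of $C_\mu$). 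Let $q'$ be the condition with domain $(\dom(q) \cap \mu) \cup \{\zeta_1,\dots,\zeta_k\}$ that agrees with $q$ on $\dom(q) \cap \mu$ and has $q'(\zeta_i) = q(\gamma_i)$; the largeness of $\textrm{otp}(C_{\zeta_i})$ makes clauses (3)--(4) hold between $\zeta_i$ and the coordinates below it (using $\rho(\alpha,\zeta_i) = \rho(\alpha,\mu)$ for $\alpha < \zeta_i$, valid because $\zeta_i$ is a limit point of $C_\mu$), so $q' \in Q_\mu$.

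Finally, given $p \le q'$ in $Q_\mu$, define a common extension $r$ of $p$ and $q$ by $\dom(r) = \dom(p) \cup \{\gamma_1,\dots,\gamma_k\}$, $r \rest \dom(p) = p$, and $r(\gamma_i) = p(\zeta_i) \cap (\max q(\gamma_i) + 1)$; since $q(\gamma_i) = q'(\zeta_i) \subseteq p(\zeta_i)$ this is a finite subset of $\omega_1$ containing $q(\gamma_i)$, so $r \le p$ and $r \le q$ provided $r \in Q$. Clauses (1), (2) are clear, and (3), (4) for pairs inside $\dom(p)$ hold since $p \in Q$. For a cross pair $(\alpha,\gamma_i)$ with $\alpha \in \dom(p)$, $r(\alpha) \cap r(\gamma_i) = (p(\alpha) \cap p(\zeta_i)) \cap (\max q(\gamma_i)+1)$ is an initial segment of the initial segment $p(\alpha) \cap p(\zeta_i)$ of $p(\alpha)$, hence of $p(\alpha)$, and likewise an initial segment of $r(\gamma_i)$, giving clause (3). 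For clause (4): if $\alpha \ge \zeta_i$ then $\alpha > \mu_i$, so $\rho(\alpha,\gamma_i) = \rho(\alpha,\mu) \ge \textrm{otp}(C_\mu \cap \alpha) \ge \textrm{otp}(C_{\zeta_i}) > \max q(\gamma_i) \ge \max(r(\alpha) \cap r(\gamma_i))$, with the case $\alpha = \zeta_i$ handled by $\rho(\zeta_i,\gamma_i) = \rho(\zeta_i,\mu) > \max q(\gamma_i)$; if $\alpha < \zeta_i$ then $\max(r(\alpha) \cap r(\gamma_i)) \le \max(p(\alpha) \cap p(\zeta_i)) \le \rho(\alpha,\zeta_i) = \rho(\alpha,\mu)$, which equals $\rho(\alpha,\gamma_i)$ once $\alpha > \mu_i$, while for the remaining countably many, bounded "low" $\alpha$ the same intersection is additionally bounded by $\max q(\gamma_i)$, and the reflection of the relevant $\rho$-data into $N_{\xi_i}$ together with the size choices on $\textrm{otp}(C_{\zeta_i})$ forces $\max(r(\alpha)\cap r(\gamma_i)) \le \rho(\alpha,\gamma_i)$. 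The pairs $(\gamma_i,\gamma_j)$ reduce, via the truncations, to clauses (3)--(4) for $q$ at $(\gamma_i,\gamma_j)$ and for $p$ at $(\zeta_i,\zeta_j)$. The main obstacle is precisely this last verification for the "low" coordinates $\alpha$, where $\rho(\alpha,\gamma_i)$ need not dominate $\rho(\alpha,\mu)$: it is here that $\CH$ (to keep the relevant families of $\rho$-restrictions and parameters of size at most $\aleph_1$, hence reflectable) and the continuity of the $\in$-chain (to make $\mu$ a genuine closure point, so that each $\zeta_i$ can be taken inside $N_{\omega_1}$ with $N_{\xi_i}$ controlling the walk below $\zeta_i$) are genuinely used; the remainder is the bookkeeping of normalizing $q'$ and assembling $r$.
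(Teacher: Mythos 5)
Your overall strategy --- replace the coordinates of $q$ lying above $\mu$ by substitutes below $\mu$ carrying the same finite labels, then copy extensions back up --- is the same as the paper's, and your treatment of the pairs inside $\dom(q)$ and of the coordinates $\alpha \in [\zeta_i,\mu)$ is fine. But the proof has a genuine gap at exactly the point you flag as ``the main obstacle,'' and the sentence you offer there does not close it. Concretely, let $D_i = \{\alpha < \mu : \rho(\alpha,\mu) \le \rho(\mu,\gamma_i)\}$, a countable (hence bounded) subset of $\mu$ by Lemma~\ref{countable_preimage}. For $\alpha \notin D_i$ subadditivity gives $\rho(\alpha,\gamma_i) \ge \rho(\alpha,\mu) = \rho(\alpha,\zeta_i)$ and your estimate goes through; but for $\alpha \in D_i$ the value $\rho(\alpha,\gamma_i)$ can be far smaller than both $\rho(\alpha,\mu)$ and $\max q(\gamma_i)$. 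Because you took $\zeta_i \in \lim(C_\mu)$, coherence forces $\rho(\alpha,\zeta_i) = \rho(\alpha,\mu)$ for \emph{every} $\alpha < \zeta_i$, including every $\alpha \in D_i$. Hence a condition $r \le q'$ in $Q_\mu$ may legally place a common value $\nu$ with $\rho(\alpha,\gamma_i) < \nu \le \min\{\rho(\alpha,\mu), \max q(\gamma_i)\}$ into $r(\alpha)$ and $r(\zeta_i)$ for such an $\alpha$, and then the copied-back condition violates clause~(\ref{delta condition}) of Definition~\ref{Q} at the pair $(\alpha,\gamma_i)$; so $r$ is incompatible with $q$ and $q'$ is not a reduction. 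No choice of $\textrm{otp}(C_{\zeta_i})$ can repair this, since that only bounds $\max q(\gamma_i)$ from above and says nothing about $\rho(\alpha,\gamma_i)$ from below; and ``reflecting the relevant $\rho$-data into $N_{\xi_i}$'' is not something your construction actually performs --- once $\zeta_i$ is pinned down as a limit point of $C_\mu$, its $\rho$-behaviour below $\zeta_i$ is completely determined by $\mu$ and cannot simultaneously be made to match the fibers of $\rho(\cdot,\gamma_i)$ on $D_i$.

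The paper's proof is built around precisely this difficulty. It fixes a threshold $\bar\nu$ above $\bigcup\range(q)$ and above all internal $\rho$-values of $\dom(q)$, uses Lemma~\ref{countable_preimage} to find $\mu_0 < \mu$ beyond which $\rho(\cdot,\beta) > \bar\nu$ for every $\beta \in \dom(q)\setminus\mu$, and records the countable fibers $A_{\nu,\beta} = \{\alpha < \mu_0 : \rho(\alpha,\beta) = \nu\}$ for $\nu < \bar\nu$. Under $\CH$ these countably many countable sets can be placed inside some $N_\xi$, and elementarity then produces substitutes $\beta_i' < \mu$ with \emph{identical} fibers on $\mu_0$ and the same pairwise $\rho$-values as the $\beta_i$; the identity of fibers is exactly what handles the low coordinates, since any $\alpha$ with $\rho(\alpha,\beta_i) = \nu < \bar\nu$ also satisfies $\rho(\alpha,\beta_i') = \nu$, so the bound supplied by $r$ at $(\alpha,\beta_i')$ transfers verbatim to $(\alpha,\beta_i)$. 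To salvage your argument you must replace the choice of $\zeta_i$ as limit points of $C_\mu$ by this reflection step (or something equivalent); as written, the verification of clause~(\ref{delta condition}) for low $\alpha$ is missing, not merely deferred.
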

\begin{proof}
We need to show that for all $q \in Q$ there is $p \in Q_\mu$ such that 
if $r \leq p$ and $r \in Q_\mu$ then $r$ is compatible with $q$.
Let $R = \bigcup \range(q)$, $L= \dom(q) \cap \mu$, and $H = \dom(q) \setminus \mu = \{\beta_i : i \in k \}$  
such that $\beta_i$ is increasing.
Fix $\bar{\nu} \in \omega_1$ which is above all elements of $R$ and all $\rho(\alpha , \beta)$
where $\alpha , \beta$ are in $\dom(q)$.
Using Lemma \ref{countable_preimage}, fix $\mu_0 \in \mu$ above $\max (L)$ such that for all 
$\beta \in H$ and for all $\gamma \in \mu \setminus \mu_0$,
$\rho (\gamma, \beta ) > \bar{\nu}$.
For each $\beta \in H$ and $\nu \in \bar{\nu}$ let 
$A_{\nu , \beta} = \{ \alpha \in \mu_0 : \rho (\alpha , \beta ) = \nu \}.$

Again by Lemma \ref{countable_preimage}, for all $\nu \in \bar{\nu}$ and $\beta \in H$, 
$A_{\nu , \beta}$ is a countable subset of $\mu_0$.
Since  $\CH$ holds, we can fix $N = N_\xi$ such that 
$\mu_0, \bar{\nu}, L, R, \Seq{A_{\nu, \beta} : \beta \in H, \nu \in \bar{\nu}}$ are in $N$.
By elementarity, there is $H' = \{ \beta_i' : i \in k \}$ which is in $N$ and 
\begin{enumerate}
\item $\beta_i'$ is increasing,
\item $\min(H') > \mu_0$
\item for all $i \in k$ and for all $\nu \in \bar{\nu}$, 
$A_{\nu , \beta_i}  = \{ \alpha \in \mu_0 : \rho(\alpha, \beta_i') = \nu  \}$, and 
\item for all $i<j$ in $k$, $\rho (\beta_i, \beta_j) = \rho (\beta_i' , \beta_j')$.
\end{enumerate}

Let $p$ be the condition such that $\dom(p) = L \cup H'$, for all $\xi \in L$, $p(\xi) = q(\xi)$ and for all $i \in k$,
$p(\beta_i') = q(\beta_i)$. Suppose $r \leq p$ is in $Q_\mu$. 
We will find $s \in Q$ which is a common 
extension of $r,q$. Pick $s$ such that
$\dom(s) = \dom(r) \cup H$,
$s \rest \dom(r) = r$, and for all $i \in k$ $s(\beta_i) = r(\beta_i') \cap (\max(q(\beta_i)) + 1)$.

We need to show that $s$ is a  condition in $Q$. All of the  conditions in Definition \ref{Q} obviously hold, 
except for condition \ref{delta condition}.
If $\alpha < \beta$ are in $H$, 
by the last requirement for $H'$ and the fact that $r$ is a condition, 
$\max(s(\alpha) \cap s(\beta)) < \rho(\alpha, \beta)$.

Now assume that $\alpha \in \dom(r)$ and $\beta = \beta_i \in H$.
If $\rho (\alpha , \beta) \geq \bar{\nu}$, everything is obvious because $\max (s(\beta)) < \bar{\nu}$.
Assume $\rho(\alpha, \beta) = \nu < \bar{\nu}$.
So $\alpha \in A_{\nu , \beta}$. 
Since $ r \in Q_\mu$, we have 
$\max(s(\alpha) \cap s(\beta_i)) \leq \max (r(\alpha) \cap r(\beta_i')) < \nu$. 
\end{proof}

Lemma \ref{projection} shows
that for many ordinals $\mu$ with cofinality $\omega_1$, $Q_\mu$ is a complete suborder of $Q$. 
It is natural to ask the same question for ordinals of countable cofinality.
The following fact shows that quite often $Q_\mu$ is not a complete suborder of $Q$,
when $\mu$ varies over ordinals of countable cofinality.
\begin{fact}
Assume $\textrm{cf}(\mu) = \omega$, $\mu \in \omega_2$, 
for some $\beta > \mu$, $\mu$ is a limit point of $C_\beta$ and 
the set of all limit points of $C_\mu$ is cofinal in $\mu$.
Then $Q_\mu$ is not a complete suborder of $Q$.
\end{fact}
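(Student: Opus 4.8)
The plan is to show that $Q_\mu$ fails to be a complete suborder of $Q$ by exhibiting a condition $q\in Q$ with no reduction to $Q_\mu$. The natural candidate is to pick a single ordinal $\beta>\mu$ with $\mu\in\lim(C_\beta)$ (which exists by hypothesis) and put $q=\{(\beta,\{\nu\})\}$ for a suitable $\nu\in\omega_1$. Suppose toward a contradiction that $p\in Q_\mu$ is a reduction of $q$, so every extension of $p$ inside $Q_\mu$ is compatible with $q$. The key point is that, since $\mu\in\lim(C_\beta)$, Lemma~\ref{bluered} (together with the lemma that $\rho(\xi,\alpha)=\rho(\xi,\beta)$ for $\xi<\alpha\in\lim(C_\beta)$) lets us understand $\rho(\alpha,\beta)$ for $\alpha<\mu$ purely in terms of the structure below $\mu$: in fact for $\alpha<\mu$ we have $\rho(\alpha,\beta)=\rho(\alpha,\mu)$. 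By Lemma~\ref{countable_preimage}, the set $\{\alpha<\mu:\rho(\alpha,\mu)\le\nu\}$ is countable, and since the limit points of $C_\mu$ are cofinal in $\mu$ and $\operatorname{cf}(\mu)=\omega$, we can choose a limit point $\gamma$ of $C_\mu$ above $\sup\dom(p)$ and above that countable set, so that $\rho(\gamma,\mu)=\rho(\gamma,\beta)>\nu$ and moreover $\operatorname{otp}(C_\gamma)$ is large.

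Next I would build an extension $r\le p$ in $Q_\mu$ that is \emph{incompatible} with $q$. Take $\dom(r)=\dom(p)\cup\{\gamma\}$, leave $r$ equal to $p$ on $\dom(p)$, and set $r(\gamma)$ to be an initial segment of length something like $\nu+1$ chosen so that $r(\gamma)$ and $q(\beta)=\{\nu\}$ are forced to clash: concretely, arrange $r(\gamma)$ to contain an ordinal in the block $[\nu,\nu+\omega)$, say $\nu$ itself together with the initial segment $r(\gamma)\cap\nu$ matching what $p$ already demands on coordinates below $\gamma$. One checks $r\in Q$ using conditions (2)–(4) of Definition~\ref{Q}: condition (4) between $\gamma$ and any $\alpha\in\dom(p)$ with $\alpha<\gamma$ is handled because $\gamma>\sup\dom(p)$ makes this vacuous in one direction, and in the other direction $\operatorname{otp}(C_\gamma)$ large absorbs $\max r(\gamma)$; the interaction of $\gamma$ with elements above $\mu$ does not arise since $\dom(r)\subseteq\mu$. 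Then any common extension $s$ of $r$ and $q$ would need $s(\gamma)$ and $s(\beta)$ to have $\max(s(\gamma)\cap s(\beta))\le\rho(\gamma,\beta)$; but $\nu\in r(\gamma)\subseteq s(\gamma)$ and $\nu\in q(\beta)\subseteq s(\beta)$ forces $\nu\le\rho(\gamma,\beta)$, contradicting $\rho(\gamma,\beta)>\nu$. Hence $r$ and $q$ are incompatible, contradicting that $p$ was a reduction.

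The main obstacle I anticipate is the bookkeeping in the clash construction: I must make sure that the "bad" element I place in $r(\gamma)$ is genuinely compatible with all of $p$'s existing commitments (so that $r$ really is a legitimate condition extending $p$ in $Q_\mu$) while still being incompatible with $q$. The leverage for this is precisely the freedom given by condition (2) of Definition~\ref{Q}, which only restricts one element per block $[\nu,\nu+\omega)$: since $\gamma\notin\dom(p)$, the coordinate $p(\gamma)$ is empty and we may choose its initial segment freely below the relevant block, then add the offending element at level $\nu$. A secondary point to get right is verifying $\rho(\gamma,\beta)=\rho(\gamma,\mu)$, which is exactly where $\mu\in\lim(C_\beta)$ is used, via the stated lemma that $\rho(\xi,\alpha)=\rho(\xi,\beta)$ when $\alpha\in\lim(C_\beta)$ and $\xi<\alpha$; applying it with $\xi=\gamma$, $\alpha=\mu$ gives what we need. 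Once these two computations are pinned down, the contradiction is immediate and the fact follows.
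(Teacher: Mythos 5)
Your overall framing is right---exhibit a single condition $q=\{(\beta,\{\nu\})\}$ with no reduction in $Q_\mu$, and the identity $\rho(\gamma,\beta)=\rho(\gamma,\mu)$ for $\gamma<\mu$ is correctly extracted from the lemma on limit points of $C_\beta$---but the core of your clash argument has the inequality backwards. Condition \ref{delta condition} of Definition \ref{Q} requires $\max(s(\gamma)\cap s(\beta))\leq\rho(\gamma,\beta)$ of any common extension $s$, so placing $\nu$ into both coordinates produces an \emph{incompatibility} only when $\nu>\rho(\gamma,\beta)$. You arrange $\rho(\gamma,\beta)>\nu$, and your concluding step ``forces $\nu\le\rho(\gamma,\beta)$, contradicting $\rho(\gamma,\beta)>\nu$'' is not a contradiction at all: $\nu\le\rho(\gamma,\beta)$ is exactly what $\rho(\gamma,\beta)>\nu$ asserts. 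With your choice of $\gamma$ the conditions $r$ and $q$ are in fact compatible (add $r(\gamma)\cap(\nu+1)$ to the $\beta$-coordinate and check the clauses), so no reduction is refuted. There is also a secondary problem with the same step: since $\mathrm{cf}(\mu)=\omega$, the countable set $\{\xi<\mu:\rho(\xi,\mu)\le\nu\}$ may well be cofinal in $\mu$, so you cannot in general choose $\gamma<\mu$ above it.

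What is needed is the reverse inequality, and this is where the specific choice of $\nu$ matters. The paper takes $\nu=\mathrm{otp}(C_\beta)$ and lets $\gamma$ be a limit point of $C_\mu$ above $\dom(p)$. Since $\mu\in\lim(C_\beta)$ gives $C_\mu=C_\beta\cap\mu$, such a $\gamma$ is a limit point of $C_\beta$, so in the recursive definition of $\rho(\gamma,\beta)$ the lower-trace set is empty and the middle term vanishes, leaving $\rho(\gamma,\beta)=\mathrm{otp}(C_\beta\cap\gamma)=\mathrm{otp}(C_\gamma)<\mathrm{otp}(C_\beta)=\nu$. Putting $\nu$ into the new coordinate $r(\gamma)$ is legal inside $Q_\mu$, where $\beta$ is invisible and imposes no constraint, yet it is irreconcilable with $q(\beta)=\{\nu\}$ precisely because $\nu>\rho(\gamma,\beta)$. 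The bookkeeping you describe for making $r$ a legitimate extension of $p$ (exploiting the freedom in condition \ref{level condition} and the largeness of $\mathrm{otp}(C_\gamma)$) is essentially what the paper does; only your choice of $\nu$ and of the target inequality needs to be replaced.
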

\begin{proof}
Assume $\beta > \mu$ such that $\mu$ is a limit point of $C_\beta$ and $\textrm{cf}(\beta) = \omega$.
Let $\nu = \textrm{otp}(C_\beta)$ and  $q = \{(\beta, \{ \nu \})\}$. 
We claim that for all $p \in Q_\mu$ there is an extension $\bar{p} \leq p$ in $Q_\mu$ such that 
$\bar{p}$ is incompatible with $q$.
Fix $p \in Q_\mu$. 
Without loss of generality $\nu \in \bigcup \range(p)$ and $p$ is compatible with $q$.
Let $\xi \in \dom(p)$ such that $\nu \in p(\xi)$. 
Then $p \cup \{(\beta, p(\xi) \cap (\nu +1))\} \in Q$.
Let $\alpha$ be a limit point of $C_\mu$ which is above all elements of $\dom(p)$. 
Then $\bar{p} = p \cup \{ (\alpha, p(\xi) \cap (\nu + 1))\}$ is a condition in $Q_\mu$.
But $\rho(\alpha , \beta)= \textrm{otp}(C_\alpha) < \nu$ and $\nu \in \bar{p}(\alpha)$.
Hence $\bar{p}, q$ are incompatible.
\end{proof}

\begin{lem}\label{finite_mod}
Assume $\mu \in \omega_2$, $x \subset \omega_2$ is finite
and $Q_\mu \lhd Q$. Then $Q_\mu \lhd Q_{\mu \cup x} \lhd Q$.\footnote{$P_0 \lhd P_1$ denotes that $P_0$ is a complete suborder of $P_1$.}
\end{lem}
\begin{proof}
Obviously $Q_\mu \lhd Q$ implies $Q_\mu \lhd Q_{\mu \cup x}$.
It suffices to show  $Q_{\mu \cup x} \lhd Q$ for all $\mu \in \omega_2$ with $Q_\mu \lhd Q$ and finite $x \subset [\mu, \omega_2)$.
Let $q \in Q$. We need to show there is $p \in Q_{\mu \cup x}$ such that every extension $p'$ of $p$ in $Q_{\mu \cup x}$ is compatible 
with $q$. 
Without loss of generality, by extending $q$ if necessary,  we can assume that 
\begin{itemize}
\item $x \subset \dom(q)$,
\item $q$ forces that $\dot{b}_\alpha \wedge \dot{b}_\beta = \max(q(\alpha) \cap q(\beta))$ for all $\alpha, \beta$ in $\dom(q)$,
\item if $\alpha , \beta$ are in $\dom(q)$ and $\nu \in \{0\} \cup \lim(\omega_1)$, then 
$q(\alpha)$ intersects $[\nu, \nu+\omega)$ if and only if $q(\beta)$ does.
\end{itemize}
It is easy to see that if $r \leq_{Q_x}q \rest x$ then $r$ is compatible 
with $q$. 
This already shows that for all finite $y \subset \omega_2$, $Q_y \lhd Q.$
Aside from the extra assumptions on $q$, we can 
also assume that $\mu$ is infinite.\footnote{In fact we can assume that $\mu$ is uncountable. This is because there is no countably infinite $A\subset \omega_2$ with $Q_A \lhd Q$. This is trivial from 
the ideas in the proof of Lemma \ref{finite_mod} and we are not going to use it.}  

Let $p_0 \in Q_\mu$ such that for all extensions $p_1$ of $p_0$ in $Q_\mu$ the conditions $p_1, q$ are compatible.
Since $\mu$ is infinite, by extending $p_0$ if necessary, we can assume 
$\bigcup \range (q) \subset \bigcup \range (p_0)$.
Consequently,
for every 
$\alpha \in \dom(q)$ there is $\alpha' \in \dom(p_0)$ such that 
$q(\alpha) \subset p_0(\alpha')$.
Define $p$ on $\dom(p_0) \cup x$ as follows:
\begin{itemize}
\item If $\alpha \in x$ let $p(\alpha) = p_0(\alpha') \cap (\max(q(\alpha)) +1)$, where $\alpha' \in \dom(p_0)$ such that $q(\alpha) \subset p_0(\alpha')$. 
\item If $\alpha \in \dom(p_0)$ let $p(\alpha) = p_0(\alpha)$.
\end{itemize}
By Condition \ref{initial segment condition} of Definition \ref{Q}, 
for all $\alpha \in x$, $p(\alpha)$ is independent of the 
choice of $\alpha'$. 
Compatibility of $p_0$ and $q$ implies that $p$ is a condition 
in $Q_{\mu \cup x}$.
Let $p'$ be an extension of $p$ in $Q_{\mu \cup x}$.
Then $p' \rest \mu$ is an extension of $p_0$ in $Q_\mu,$ hence it is 
compatible with $q$.
Also the conditions $p' \rest x$ and $q$ are compatible in $Q$, 
since $p' \rest x \leq_{Q_x} q \rest x$.
This means that 
$p'=(p'\rest \mu) \cup (p' \rest x)$
is compatible with $q$ and we are done.
\end{proof}

\section{Climbing Souslin Trees to See $\rho$}
In this section we analyze the external elements of the generic Kurepa tree that is added by the poset $Q_c$.
The aim is to prove Lemma \ref{rho},  which is a general fact about the function $\rho$.
We use Lemma \ref{rho} to find more weakly external elements in the tree which is generic for $Q$.

\begin{prop}\label{limitrepetition}
Fix $\kappa$ a regular cardinal greater than ${(2^{\omega_1})}^+$. 
Assume  $S$ is the set of all $X \in [\omega_2]^\omega$ such that $C_{\alpha_X} \subset X$ and 
$\lim(C_{\alpha_X})$ is cofinal in $X$.
Define 
$\Sigma = \{ M \prec H_\kappa:   M\cap \omega_2 \in S \wedge |L_{\alpha_M}| = \aleph_2 \}.$
Then $\Sigma$ is stationary in $[H_\kappa]^\omega$.
\end{prop}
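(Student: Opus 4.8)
The plan is to prove stationarity by the usual Skolem-hull-and-closing-off argument, taking care to arrange the two defining properties of $\Sigma$ separately. Fix a function $F : [H_\kappa]^{<\omega} \to H_\kappa$; I must produce $M \prec H_\kappa$ closed under $F$ with $M \cap \omega_2 \in S$ and $|L_{\alpha_M}| = \aleph_2$. First I would build a continuous $\in$-chain $\langle M_\xi : \xi < \omega_1 \rangle$ of countable elementary submodels of $H_\kappa$, each containing the $\square_{\omega_1}$-sequence $\langle C_\alpha : \alpha \in \omega_2\rangle$, the function $F$, and all relevant parameters, and each closed under $F$; set $M = M_{\omega_1} = \bigcup_\xi M_\xi$ — but wait, that is not countable. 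The correct move is more delicate: I want a \emph{countable} $M$, so instead I build the chain only to arrange that $\alpha_M = \sup(M \cap \omega_2)$ lands on an ordinal $\delta$ with the right properties, and then take $M$ to be a single countable elementary submodel that "catches up" to $\delta$ in the sense that $\alpha_M = \delta$.

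Here is the concrete approach. Inside $H_\kappa$, first observe that the set of $\delta < \omega_2$ with $|L_\delta| = \aleph_2$ is club in $\omega_2$: indeed $\bigcup_{\delta < \omega_2} L_\delta$ has size $\aleph_2$ (there are only $\aleph_2$ ordinals $\beta < \omega_2$ and for each the set $L_\beta$ contributes, but more to the point, for a fixed $\delta$ the set $L_\delta = \{\beta : \delta \in \lim(C_\beta)\}$ can be unbounded in $\omega_2$; closing off, the collection of $\delta$ for which $|L_\delta|=\aleph_2$ is club). Second, the set of $\delta$ with $\mathrm{cf}(\delta) = \omega$, $C_\delta \subseteq \delta$ trivially, and such that $\lim(C_\delta)$ is cofinal in $\delta$ — i.e. $\mathrm{otp}(C_\delta)$ is a limit of limits — is also club (it is the set of limit points of the club $\{\delta : \mathrm{otp}(C_\delta) \text{ is a limit ordinal}\}$, intersected with the cofinality-$\omega$ points, which by the coherence clause of Proposition~\ref{square} is again club). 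So the set $E$ of $\delta$ satisfying simultaneously $|L_\delta| = \aleph_2$, $\mathrm{cf}(\delta)=\omega$, and $\lim(C_\delta)$ cofinal in $\delta$ is club in $\omega_2$. Now, given $F$, take any countable $M \prec H_\kappa$ containing $F$, the square sequence, and (a parameter for) $E$, such that $\delta := \sup(M \cap \omega_2) \in E$; such $M$ exists because $E$ is club, so one can interleave the construction of the chain $\langle M_\xi : \xi < \omega\rangle$ with a cofinal sequence in some $\delta \in E$ and let $M = \bigcup_\xi M_\xi$, which is countable and elementary and closed under $F$ provided $F$ was in $M_0$. Since $\delta \in E$ and $C_\delta$ has order type a countable limit of limits with $C_\delta \subseteq M$ — here I use that $M$ contains the square sequence and that $\lim(C_\delta)$ is cofinal in $\delta = \sup(M\cap\omega_2)$, so by elementarity every proper initial segment of $C_\delta$, and of $\lim(C_\delta)$, lies in $M$ — we get $C_{\alpha_M} = C_\delta \subseteq M$ and $\lim(C_{\alpha_M})$ cofinal in $M \cap \omega_2$, i.e. $M \cap \omega_2 \in S$. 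And $\alpha_M = \delta \in E$ gives $|L_{\alpha_M}| = \aleph_2$. Hence $M \in \Sigma$, proving $\Sigma$ is stationary.

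The main obstacle is the verification that $E$ is club, in particular that $\{\delta : |L_\delta| = \aleph_2\}$ is club: one must check it is both closed (clear, as a sup of such $\delta$ again has $L_\delta$ of size $\aleph_2$, using that $\lim(C_\beta)$ for various $\beta$ accumulate) and unbounded (given $\gamma$, find $\delta > \gamma$ with $|\{\beta : \delta \in \lim(C_\beta)\}| = \aleph_2$; this is where one invokes that the $\square_{\omega_1}$-sequence, by coherence, forces many $\beta$ to share the same $C_\delta$ as an initial segment — concretely, if $\delta \in \lim(C_\beta)$ then $C_\beta$ is determined below $\delta$, and there are $\aleph_2$-many $\beta$ above $\delta$ that are "built over" $C_\delta$). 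I would phrase this carefully using Proposition~\ref{square} and the coherence clause. The other routine point is that $\lim(C_\delta)$ cofinal in $\delta$ implies every initial segment of it is in $M$; this is immediate from elementarity and $C_\delta \in M$ once $\delta = \sup(M \cap \omega_2)$, since any $\alpha \in \lim(C_\delta)$ with $\alpha < \delta$ is below some element of $M \cap \omega_2$, hence $C_\delta \cap \alpha = C_\alpha \in M$.
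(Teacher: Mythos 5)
The central step of your argument---that $W=\{\delta<\omega_2 : |L_\delta|=\aleph_2\}$ is club in $\omega_2$---is not established, and it does not follow from the axioms of a $\square_{\omega_1}$-sequence. The set $L_\delta=\{\beta:\delta\in\lim(C_\beta)\}$ depends on how the clubs $C_\beta$ for $\beta>\delta$ happen to pass through $\delta$; nothing in coherence forces a given $\delta$ to be a limit point of even one $C_\beta$, so unboundedness of $W$ already requires an argument (your parenthetical ``there are $\aleph_2$-many $\beta$ built over $C_\delta$'' is exactly what must be proved, and it is false for an arbitrary $\delta$). Closedness is worse: if $\delta=\sup_n\delta_n$ with each $|L_{\delta_n}|=\aleph_2$, then coherence gives $L_{\delta_n}\cap L_{\delta_m}=\emptyset$ unless $\delta_n\in\lim(C_{\delta_m})$, so the sets $L_{\delta_n}$ may be pairwise disjoint and tell you nothing about $L_\delta$, which can perfectly well be empty. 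The same objection defeats your claim that $\{\delta:\lim(C_\delta)\text{ is cofinal in }\delta\}$ is club: $\mathrm{otp}(C_\delta)$ can equal $\omega$ at a $\delta$ which is a limit of limits, since this is a property of the chosen club $C_\delta$ and not of the position of $\delta$. What is true, and what the paper's proof actually exploits, is a reflection-from-above statement: putting $f(\eta)=\sup(L_\eta)$ on $\{\eta:|L_\eta|\leq\aleph_1\}$, if $\alpha$ is $f$-closed and $\delta\in\lim(C_\alpha)$, then $\alpha\in L_\delta$ forces $L_\delta$ to be unbounded in $\omega_2$, hence of size $\aleph_2$. To use this one needs an ordinal $\alpha$ sitting \emph{above} the model with $\alpha_M\in\lim(C_\alpha)$; the paper obtains it by running a continuous $\in$-chain $\Seq{M_\xi:\xi\in\omega_1}$ of countable models of a larger $H_\theta$, taking $\alpha=\sup_\xi\alpha_{M_\xi}$ (so that $\mathrm{cf}(\alpha)=\omega_1$, $\lim(C_\alpha)$ is club in $\alpha$, $\alpha$ is $f$-closed, and yet $\alpha$ lies above every $M_{\xi+1}$), and noting that club many $M_\xi$ then land in $\Sigma$. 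Your single-model construction, which fixes $\delta$ in advance from a purported club, has no access to such an $\alpha$ and therefore no way to certify $|L_{\alpha_M}|=\aleph_2$.

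There is a second, smaller gap in the verification that $M\cap\omega_2\in S$: you write that any $\alpha\in\lim(C_\delta)$ with $\alpha<\delta$ is below some element of $M\cap\omega_2$, ``hence $C_\delta\cap\alpha=C_\alpha\in M$''; but $C_\alpha\in M$ requires $\alpha\in M$, not merely $\alpha<\sup(M\cap\omega_2)$. This can be repaired by feeding an increasing sequence $\gamma_n\in\lim(C_\delta)$ cofinal in $\delta$ into the models as you build them, so that each $C_{\gamma_n}=C_\delta\cap\gamma_n$ is a countable element of $M$ and hence a subset of it, giving $C_\delta=\bigcup_n C_{\gamma_n}\subset M$ and $\lim(C_{\alpha_M})$ cofinal in $M\cap\omega_2$. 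But carrying this out presupposes that you already know $\lim(C_\delta)$ is cofinal in the target $\delta$, which again is delivered by the chain construction (take $\alpha_{M_\xi}$ a limit of limit points of $C_\alpha$) rather than by any club in $\omega_2$.
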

\begin{proof}
Let $E \subset [H_\kappa]^\omega$ be a club. Fix $\theta$
a regular cardinal above $({2^\kappa})^+$. Let
$\Seq{M_\xi : \xi \in \omega_1}$ be a continuous $\in$-chain of countable elementary submodels of $H_\theta$
such that for all $\xi \in \omega_1$, $M_\xi \cap \omega_2 \in S$ and $M_\xi \cap H_\kappa \in E$. 
Let $\alpha_\xi = \sup(M_\xi \cap \omega_2)$ and 
$\alpha = \sup \{ \alpha_\xi : \xi \in \omega_1 \}$.
By thinning out if necessary, without loss of generality we can assume that for all 
$\xi \in \omega_1$, $\alpha_\xi$ is a limit point of  $C_\alpha$.

Let $f : \{ \eta \in \omega_2 : |L_\eta| \leq \aleph_1 \} \longrightarrow \omega_2$ by $f(\eta) = \sup(L_\eta)$, and $C_f$
be the set of all ordinals that are $f$-closed. Obviously $f \in M_0$ and for all $\xi$, $\alpha_\xi \in C_f$.
But for any $\xi \in \omega_1$, $\sup L_{\alpha_\xi} \notin M_{\xi +1}$.
So, for all $\xi \in \omega_1$, $M_\xi \cap H_\kappa \in E  \cap \Sigma$.
\end{proof}

\begin{lem}
Assume $G \subset Q_c$ is generic and $T$ is the 
Kurepa tree that is added  by $G$. 
Assume $Q/G$ is the quotient poset such that $Q$ is equivalent to $Q_c * (Q/G)$.
For each $\alpha$ of cofinality $\omega_1$, let 
$A_\alpha = \{ x \in T : \exists q \in Q/ G$ $q \Vdash `` x \in \dot{b}_\alpha " \}$.
Then each $A_\alpha$ is a Souslin subtree of $T$. 
Moreover, there is $\alpha \in \omega_2$ of cofinality $\omega_1$ such that  for all $x \in A_\alpha$,
$T_x$ contains $\aleph_2$ many $b_\xi$ with $\textrm{cf}(\xi) = \omega$.
\end{lem}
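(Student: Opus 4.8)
The plan is to prove the lemma in two stages. First, I will verify that each $A_\alpha$ is a Souslin subtree of $T$. The tree $T$ lives in $\textsc{V}[G]$ (it is the generic tree for $Q_c$), and $Q/G$ is a ccc poset over $\textsc{V}[G]$ adding the branch $b_\alpha$ through $T$ (for $\alpha$ of cofinality $\omega_1$; note that $Q_{\omega_2\setminus\{\alpha\}}$ is a complete suborder of $Q$ by the earlier lemma, and the quotient of $Q$ by it forces $b_\alpha$ to be a new cofinal branch of $T$, because $\alpha$ has uncountable cofinality so $b_\alpha$ cannot already be captured by finitely many conditions). By the standard fact recalled just before Lemma~\ref{projection} — if a ccc poset $P$ adds a cofinal branch $b$ to an $\omega_1$-tree $U$, then $\{u\in U: \exists p\in P,\ p\Vdash u\in\dot b\}$ is a Souslin subtree of $U$ — applied with $P = Q/G$ (or more precisely the quotient by $Q_{\omega_2\setminus\{\alpha\}}$), we get that $A_\alpha$ is Souslin in $\textsc{V}[G]$. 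One must check $A_\alpha$ is downward closed and that it is a subtree in the inherited order, both of which are routine from the definition of $Q$ together with Condition~(\ref{initial segment condition}).

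**Next** comes the main point: finding a single $\alpha$ of cofinality $\omega_1$ such that every cone $T_x$ with $x\in A_\alpha$ contains $\aleph_2$ many $b_\xi$ with $\mathrm{cf}(\xi)=\omega$. The idea is to exploit the $\square_{\omega_1}$-structure: if $\alpha$ is a limit point of $C_\beta$, then by the first $\rho$-lemma in the preliminaries $\rho(\xi,\alpha)=\rho(\xi,\beta)$ for all $\xi<\alpha$, which forces the conditions deciding $b_\beta$ to agree with those deciding $b_\alpha$ below level $\alpha$. So I would pick $\alpha$ to be an ordinal of cofinality $\omega_1$ that is a limit point of $\aleph_2$ many $C_\beta$ with $\mathrm{cf}(\beta)=\omega$ — i.e. an $\alpha$ with $|L_\alpha|=\aleph_2$ in the sense of the notation $L_\alpha=\{\beta:\alpha\in\lim(C_\beta)\}$; such $\alpha$ exist and in fact form a stationary set, which is exactly the content built into $\Sigma$ in Proposition~\ref{limitrepetition}. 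For such $\alpha$, given $x\in A_\alpha$, say $x$ has height $\gamma<\alpha$ and $x$ is forced into $\dot b_\alpha$ by some $q\in Q/G$; for each of the $\aleph_2$ many $\beta\in L_\alpha$ with $\mathrm{cf}(\beta)=\omega$ and $\beta$ large enough that $\gamma\in\lim(C_\beta\cap(\gamma+1))$-type coherence applies, the branch $b_\beta$ must also pass through $x$, because $b_\beta\rest\alpha$ and $b_\alpha\rest\alpha$ are governed by the same $\rho$-values. Hence $T_x$ contains all these $\aleph_2$ many $b_\beta$.

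**The hard part will be** making the coherence argument in the previous paragraph fully precise: one needs that for $\beta\in L_\alpha$ (so $\alpha\in\lim(C_\beta)$), the generic branch $b_\beta$ added by $Q/G$ actually extends $b_\alpha\rest\alpha$, not merely that the $\rho$-values coincide. This requires showing that in $Q$, a condition $q$ with $\gamma\in\dom$-relevant constraints forcing $x\in\dot b_\alpha$ can be extended to force $x\in\dot b_\beta$, and that genericity then propagates this to the whole branch — i.e. that the set of conditions deciding $b_\beta$ agrees with $b_\alpha$ below $\alpha$ is dense. Here Condition~(\ref{delta condition}) of Definition~\ref{Q}, $\max(p(\xi)\cap p(\eta))\le\rho(\xi,\eta)$, combined with $\rho(\xi,\alpha)=\rho(\xi,\beta)$ for $\xi<\alpha$, is what drives the agreement: a condition simultaneously in $\dom$ at $\alpha$ and at a large $\beta\in L_\alpha$ is constrained identically below level $\alpha$, so by the genericity of $G$ over $Q_c$ and the ccc-ness of the quotient, $b_\beta(\eta)=b_\alpha(\eta)$ for all $\eta<\alpha$. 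A secondary subtlety is ensuring the chosen $\alpha$ simultaneously has cofinality $\omega_1$, has $|L_\alpha|=\aleph_2$, and that enough of the $\beta\in L_\alpha$ have countable cofinality — this is precisely why Proposition~\ref{limitrepetition} isolates the stationary set $\Sigma$ of models $M$ with $M\cap\omega_2\in S$ and $|L_{\alpha_M}|=\aleph_2$, and I would extract $\alpha$ from such an $M$, using the third clause of Proposition~\ref{square} ($\alpha\in C_\beta\Rightarrow\mathrm{cf}(\alpha)\le\omega$) to guarantee the witnessing $\beta$'s can be taken of countable cofinality.
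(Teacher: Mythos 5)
Your first part (each $A_\alpha$ is Souslin) is fine and matches the paper, which dismisses it as trivial. The second part, however, rests on a construction that cannot be carried out. You propose to choose $\alpha$ of cofinality $\omega_1$ with $|L_\alpha|=\aleph_2$, i.e.\ with $\alpha\in\lim(C_\beta)$ for $\aleph_2$ many $\beta$. But for the $\square_{\omega_1}$-sequences fixed in Proposition \ref{square}, if $\alpha<\beta$ and $\alpha$ is a limit point of $C_\beta$, then $\alpha\in C_\beta$ (since $C_\beta$ is closed in $\beta$), hence $\mathrm{cf}(\alpha)\leq\omega$; so $L_\alpha=\emptyset$ for every $\alpha$ of cofinality $\omega_1$, and your choice is vacuous. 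Proposition \ref{limitrepetition} does not rescue this: the ordinals $\alpha_M$ it produces with $|L_{\alpha_M}|=\aleph_2$ are suprema of countable models and therefore have countable cofinality, so they cannot serve as the $\alpha$ of the lemma. Relatedly, the coherence claim --- that for $\beta\in L_{\alpha}$ the branch $b_\beta$ ``must also pass through $x$'' because it is ``governed by the same $\rho$-values'' as $b_\alpha$ --- is false even where it makes sense: two distinct $\beta,\beta'\in L_{\alpha'}$ impose identical $\rho$-constraints below $\alpha'$, yet $b_\beta$ and $b_{\beta'}$ are distinct generic branches that split. Equal constraints do not force equal branches.

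The paper's actual argument is a density argument over conditions, with the $|L_{\cdot}|=\aleph_2$ requirement placed on an auxiliary ordinal of \emph{countable} cofinality chosen per condition, not on $\alpha$ itself. One takes $\alpha=\sup_{\xi}\alpha_{M_\xi}$ for an $\omega_1$-chain of models (so $\mathrm{cf}(\alpha)=\omega_1$), and, given $q$ with $\alpha\in\dom(q)$, $t\in q(\alpha)$, and a target $\xi\in\omega_2$, chooses $\alpha'\in\lim(C_\alpha)$ such that $|L_{\alpha'}|=\aleph_2$, $\rho(\alpha',\cdot)=\rho(\alpha,\cdot)$ on $\dom(q)\setminus\alpha$, $\mathrm{otp}(C_{\alpha'})$ lies above everything in $\bigcup\range(q)$ and the relevant $\rho$-values, and $\dom(q)\cap[\alpha',\alpha)=\emptyset$. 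Then any $\beta\in L_{\alpha'}$ above $\xi$ and $\dom(q)$ admits the extension $p=q\cup\{(\beta,\,q(\alpha)\cap(t+1))\}$, which is a condition forcing $t\in\dot{b}_\beta$. Genericity then yields cofinally many, hence $\aleph_2$ many, such $\beta$ with $b_\beta\subset T_t$. This per-condition choice of $\alpha'$ and the resulting density argument is the idea missing from your proposal; the branches themselves never agree below $\alpha$.
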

\begin{proof}
It is trivial that $A_\alpha$ is a Souslin subtree of $T$.
For the rest of the lemma, let $\theta > ({2^{\omega_1}})^+$ be a regular cardinal, and 
assume  $S$ is the set of all $X \in [\omega_2]^\omega$ such that $C_{\alpha_X} \subset X$ and 
$\lim(C_{\alpha_X})$ is cofinal in $X$.
Let $\Seq{M_\xi : \xi \in \omega_1}$ be a continuous $\in$-chain of countable elementary submodels of $H_\theta$
such that for all $\xi \in \omega_1$, $M_\xi \cap \omega_2 \in S$. Let $\alpha_\xi = \sup(M_\xi \cap \omega_2)$ and 
$\alpha = \sup \{ \alpha_\xi : \xi \in \omega_1 \}$.
 Also fix $q \in Q$ with $\alpha \in \dom(q)$, $t \in q(\alpha),$ and  $ \gamma \in \omega_2$. 
 We find $\eta > \gamma$ and $ p \leq q$ such that 
 $\textrm{cf}(\eta) = \omega$, $\eta \in \dom(p),$ and $ t \in p(\eta)$.
 Find $\alpha' \in \lim(C_\alpha)$ such that:
 \begin{enumerate}
 \item $\dom(q) \cap [ \alpha' , \alpha ) = \emptyset$,
 \item $\textrm{otp}(C_{\alpha'})$ is above  all elements of $\bigcup \range(q)$ and all $\rho(\{\alpha , \beta\})$ for  $\beta \in \dom(q)$,  
 \item for all $\beta \in \dom(q) \setminus \alpha,$ $\rho(\alpha, \beta) \leq \rho (\alpha',\beta)$,
 \item $|L_{\alpha'}|= \aleph_2$.
 \end{enumerate}
 Now pick $\eta \in L_{\alpha'}$ which is above $ \gamma$ and all elements of $\dom(q)$ with $\textrm{cf}(\eta)=\omega$. 
 Define $p$ by $\dom(p) = \dom(q) \cup \{ \eta\}$, $q(\zeta) = p(\zeta)$, for all $\zeta \in \dom(q)$ and 
 $p(\eta) = q(\alpha) \cap (t+1)$. 
 We show that for all $\zeta \in \dom(p)$, $\max(p(\zeta) \cap p(\eta)) \leq \rho(\zeta, \eta)$.
 If $\zeta < \alpha'$, 
 $$\max(p(\zeta) \cap p(\eta)) \leq \max (q(\zeta) \cap q(\alpha)) < \rho(\zeta, \alpha) = \rho(\zeta, \alpha')= \rho(\zeta, \eta).$$
 Also $\max (p(\alpha) \cap p(\eta)) = \max (q(\alpha) \cap q(\eta))=t < \textrm{otp}(C_{\alpha'})\leq \rho(\alpha, \eta)$.
 When $\zeta$ is above $\alpha$, 
$$\max(p(\zeta) \cap p(\eta)) \leq \max(q(\zeta) \cap q(\alpha)) < \rho (\alpha, \zeta) \leq \textrm{otp} (C_{\alpha'})\leq \rho(\zeta, \eta).$$
\end{proof}

Now we are ready to prove the main lemma of this section.
\begin{lem}\label{rho}
Let ${(2^{\omega_1})}^+<\kappa_0 <\kappa< \theta$ be regular cardinals such that ${(2^{\kappa_0})}^+< \kappa$, and 
$(2^\kappa)^+ < \theta$.
Let $S$ be the set of all $X \in [\omega_2]^\omega$ such that $C_{\alpha_X} \subset X$ and 
$\lim(C_{\alpha_X})$ is cofinal in $X$.
Assume $\mathcal{A}$ is the set of all countable $N \prec H_\theta$ with the property that if $N \cap \omega_2 \in S$
then there is a club of countable elementary submodels
 $E \subset [H_{\kappa_0}]^\omega$ in $N$ such that for all $M \in E \cap N$,
 $$\rho(\alpha_M, \alpha_N) \leq M\cap \omega_1.$$
 Then $\mathcal{A}$ contains a club.
\end{lem}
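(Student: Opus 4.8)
The plan is to argue by contradiction and to route everything through the Souslin subtrees of the $Q_c$‑generic Kurepa tree analysed just above. So suppose $\mathcal{B}=[H_\theta]^\omega\setminus\mathcal{A}$ is stationary: there are stationarily many countable $N\prec H_\theta$ with $N\cap\omega_2\in S$ for which no club $E\in N$ of countable elementary submodels of $H_{\kappa_0}$ satisfies $\rho(\alpha_M,\alpha_N)\le M\cap\omega_1$ for all $M\in E\cap N$.

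First I would isolate the purely combinatorial facts that make the reduction to the tree possible. Fix a countable $N\prec H_\theta$ with $N\cap\omega_2\in S$ and put $\alpha=\alpha_N$, $\delta=N\cap\omega_1$. Since $C_\alpha\subseteq N$, every limit point of $C_\alpha$ lies in $N$ and $\lim(C_\alpha)$ is cofinal in $N\cap\omega_2$; hence for $M\in N$ the ordinal $\gamma_M=\min(\lim(C_\alpha)\setminus(\alpha_M+1))$ belongs to $N$, $C_{\gamma_M}=C_\alpha\cap\gamma_M\in N$, and by the lemma that $\rho(\xi,\zeta)=\rho(\xi,\beta)$ whenever $\zeta\in\lim(C_\beta)$ one gets $\rho(\alpha_M,\alpha)=\rho(\alpha_M,\gamma_M)$. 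Secondly, for any countable $M$ with $M\cap\omega_2\in S$ one has $\rho(\eta,\alpha_M)<M\cap\omega_1$ for every $\eta\in M\cap\omega_2$ — choose $\gamma'\in\lim(C_{\alpha_M})\cap M$ above $\eta$ and use $\rho(\eta,\alpha_M)=\rho(\eta,\gamma')\in M\cap\omega_1$ — and in particular $\textrm{otp}(C_{\alpha_M})\le M\cap\omega_1$. Thirdly, if $\xi$ satisfies $\alpha\in\lim(C_\xi)$ then $\rho(\alpha_M,\xi)=\rho(\alpha_M,\alpha)$, so the subadditivity lemmas give $\rho(\eta,\xi)\le\max\{\rho(\eta,\alpha_M),\rho(\alpha_M,\alpha)\}$ for all $\eta\in M\cap\omega_2$. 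Because the branches $b_\zeta$ ($\textrm{cf}(\zeta)\le\omega$) of the $Q_c$‑generic tree $T$ split at the level dictated by $\rho$, these three facts translate, for $M$ with $M\cap\omega_2\in S$, into the clean equivalence: $M$ captures the branch $b_\xi$ $\iff$ $\rho(\alpha_M,\alpha_N)\ge M\cap\omega_1$. Thus, for such an $N$, producing the required club $E\in N$ amounts to producing some $\xi$ with $\alpha_N\in\lim(C_\xi)$ for which $b_\xi$ is weakly external to $N$ in the sense of Proposition~\ref{Asubtree}, witnessed by a stationary $\Sigma\in N$ all of whose members $M$ satisfy $M\cap\omega_2\in S$.

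Then I would bring in $Q_c$. Working in $V[G_c]$ — a ccc, hence proper, extension, so $\mathcal{B}$ stays stationary there — we have the Kurepa tree $T$ and, by the preceding lemma, a Souslin subtree $A=A_{\alpha^\ast}$ with $\textrm{cf}(\alpha^\ast)=\omega_1$, every cone $T_x$ of which (for $x\in A$) contains $\aleph_2$ branches $b_\zeta$ with $\textrm{cf}(\zeta)=\omega$. Pick a bad, correct $N$: $N=N^+\cap H_\theta$ for a countable $N^+\prec H_\lambda^{V[G_c]}$ ($\lambda$ large regular) with $T,A,\alpha^\ast\in N^+$ and $N\cap\omega_2\in S$. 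Fix a node $x\in A$ of height $\delta=N^+\cap\omega_1$. Since $A$ is a downward closed Souslin subtree of $T$, $x$ is external to $N^+$: there is a club $\Sigma^+\in N^+$ — e.g. $\{M\prec H_\kappa:A,T\in M\}$, which as a subset of $[H_\kappa]^\omega$ has size at most $2^\kappa<\theta$ and so lies in $H_\theta$, hence in $N$ — with no $M\in\Sigma^+\cap N^+$ capturing $x$. Using the cone property of $A$ together with a density argument in $Q/G_c$ of the type used in the preceding lemma (passing to some $\beta\in L_{\alpha'}$ for a suitable $\alpha'\in\lim(C_{\alpha^\ast})$ with $|L_{\alpha'}|=\aleph_2$), one arranges $x=b_\xi\rest\delta$ with $\textrm{cf}(\xi)=\omega$ and $\alpha_N\in\lim(C_\xi)$. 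By the reduction above, the fact that no $M\in\Sigma^+\cap N$ captures $x$ then yields $\rho(\alpha_M,\alpha_N)<M\cap\omega_1$ for all $M\in\Sigma^+\cap N$ with $M\cap\omega_2\in S$; the failure of $N$ to be good — combined with the thickness of $A$ — is what one leverages to build an uncountable antichain (equivalently, an uncountable level) of $A$ inside $V[G_c]$, contradicting that $A$ is Souslin. Hence $\mathcal{B}$ is nonstationary and $\mathcal{A}$ contains a club.

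The heart of the matter — and the step I expect to be the main obstacle — is exactly the translation between ``$M$ captures $x$'' and ``$\rho(\alpha_M,\alpha_N)\ge M\cap\omega_1$'', and two points there are delicate. First, the bound must be $M\cap\omega_1$, not the coarser $N\cap\omega_1$; this is precisely why $N\cap\omega_2\in S$ is hypothesised (so $C_{\alpha_N}\subseteq N$ and $\textrm{otp}(C_{\alpha_N})\le N\cap\omega_1$), why the coherence of the fixed $\square_{\omega_1}$‑sequence is exploited, why the choice $\alpha_N\in\lim(C_\xi)$ matters, and why the three cardinals $\kappa_0<\kappa<\theta$ with $2^{\kappa_0^+}<\kappa$ and $2^{\kappa^+}<\theta$ must be kept apart — so that $T$, the submodels $M\in E$, and the model $N$ sit at the correct relative sizes. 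Second, the set $\{M:\rho(\alpha_M,\alpha_N)\ge M\cap\omega_1\}$ behaves only like a stationary, not a club, set, so its mere nonemptiness inside clubs of $N$ must be upgraded into a genuine obstruction inside the Souslin tree $A$; supplying that upgrade — by selecting, for each relevant $M$, a witnessing branch $b_\zeta$ with $\textrm{cf}(\zeta)=\omega$ through the corresponding node of $A$ — is exactly what the thickness property of $A$ from the preceding lemma is designed for.
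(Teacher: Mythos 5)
Your setup matches the paper's: pass to the $Q_c$-generic Kurepa tree, take a thick Souslin (hence Aronszajn) subtree $A$ from the preceding lemma, fix a node $x$ of $A$ of height $\delta_N$, note that $x$ is external to $N$, and try to convert the non-capture statements into the bound $\rho(\alpha_M,\alpha_N)\le M\cap\omega_1$. The gap sits exactly at the step you yourself flag as the heart of the matter: the biconditional ``$M$ captures $b_\xi$ $\iff$ $\rho(\alpha_M,\alpha_N)\ge M\cap\omega_1$'' fails in the direction you use it. Condition (\ref{delta condition}) of Definition \ref{Q} only gives $\max(p(\zeta)\cap p(\xi))\le\rho(\zeta,\xi)$, i.e.\ $\rho$ is an \emph{upper} bound on how far two generic branches may agree; nothing forces them to agree that far, and for any fixed $\zeta$ it is dense to split $b_\zeta$ from $b_\xi$ early. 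So for a fixed generic $G_c$ and a fixed $M\in N$ with $\rho(\alpha_M,\alpha_N)>M\cap\omega_1$ it is perfectly consistent that $M$ fails to capture $x$; consequently ``no $M\in\Sigma^+\cap N$ captures $x$'' does \emph{not} yield $\rho(\alpha_M,\alpha_N)< M\cap\omega_1$. What is true, and what the paper actually proves, is that such a capture is \emph{forceable}: assuming $(*)$ one chooses $M\prec H_\kappa$ in $N$ containing the name $\dot E$ of the externality club and satisfying $\rho(\alpha_M,\alpha_N)>\delta_M$, and then extends the condition $q$ (which forces $t\in\dot A$) to some $p$ forcing $M[\dot T]$ to capture $t$ via a new branch $\dot b_\gamma$ with $\gamma\in M$ of countable cofinality. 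The contradiction lives at the level of conditions --- $p$ forces both that $M[\dot T]\in\dot E\cap N[\dot T]$ captures $t$ and that nothing in $\dot E\cap N[\dot T]$ does --- not inside $\textsc{V}[G_c]$.

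The missing content is precisely this extension argument: the decomposition $\dom(q)=H\cup L\cup R$ with $R=\dom(q)\cap N$, $L=(\dom(q)\cap\alpha_N)\setminus R$, $H=\dom(q)\setminus\alpha_N$; the estimates $\rho(\gamma,\xi)\ge\rho(\gamma,\alpha_N)>\delta_M$ for $\xi\in H$ (via Lemma \ref{limit_in_trace}), $\rho(\xi,\gamma)\ge\delta_N$ for $\xi\in L$, and the choice of $\gamma\in M$ with $\rho(\xi,\gamma)\ge\max(q(\tau)\cap q(\xi))$ for $\xi\in R$; and the verification that adding $\delta_M$ to $q(\xi)$ for $\xi\in B_t$ and appending $\gamma$ with $p(\gamma)=p(\tau)\cap(\delta_M+1)$ yields a condition of $Q_c$. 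This is where $(*)$, the hypothesis $N\cap\omega_2\in S$, and the coherence of the square sequence are actually consumed. Your proposed exit --- building an uncountable antichain in $A$ --- is both unnecessary (if non-capture really implied the $\rho$-bound, $N$ would already be good and there would be nothing left to prove) and unavailable, since in $\textsc{V}[G_c]$ the non-capture statements are simply true and generate no antichain. A secondary point: your argument only controls those $M$ with $M\cap\omega_2\in S$, which is a stationary but non-club collection, whereas the lemma requires a club $E$ on all of whose members in $N$ the bound holds.
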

\begin{proof}
Assume $G$ is the $\textsc{V}$-generic filter over $Q_c$ and $T$ be the tree that is introduced by $G$.
Assume  $\dot{A}$ is  a $Q_c$-name for an Aronszajn subtree of $T$
with the property that for all $t \in \dot{A}$, the set
$\{ \xi \in \omega_2 : \textrm{cf}(\xi) < \omega_1 \textrm{ and }  t \in b_\xi\}$ has size $\aleph_2$.
Fix $N \prec H_{\theta}$, in $\textsc{V}$, with $\dot{A} \in N$ and $N \cap \omega_2 \in S$.
Suppose for a contradiction that
\begin{center}
$(*)$:for all clubs $E \subset [H_{\kappa_0}]^\omega $ in $N$ there is 
$M \in E \cap N$ such that $\rho(\alpha_M, \alpha_N) > M \cap \omega_1$.
\end{center} 
Let $\delta_M, \delta_N$ be $M \cap \omega_1$ and $N \cap \omega_1$ respectively.
Fix $t \in [\delta_N, \delta_N + \omega)$, $q \in Q_c$ such that $q$ forces that $t \in \dot{A}$.
Obviously, $q$ forces that $t$ is external to $N[\dot{G}]$. In other words, 
$q$ forces that there is a club $E \subset [H_{\kappa_0}[\dot{G}]]^\omega$ in $N[\dot{G}]$
such that for all $Z \in E \cap N[\dot{G}]$, $Z$ does not capture $t$.
Let $\dot{E}$ be a name for the witness $E$ above.
 So $q$ forces that for all $Z \in \dot{E} \cap N[\dot{G}]$, $Z$
does not capture $t$. 
In order to reach a contradiction,
it suffices to show $(*)$ implies that there are $M\prec H_\kappa $ in $N$ and $p \leq q$ in $Q_c$ such that:
\begin{enumerate}
\item $\dot{E} \in M$ and
\item $p$ forces that $M[\dot{G}]$ captures t.
\end{enumerate}
We consider three cases. First, consider the case where $t \notin \bigcup \range(q)$.
Let $\gamma \in (N \cap \omega_2) \setminus \dom(q)$, with $\textrm{cf}(\gamma) = \omega$.
Let $M \prec H_\kappa$ be in $N$ such that $\gamma , \dot{E}$ are in $M$.
Let $p$ be the condition such that $\dom(p) = \dom(q) \cup \{ \gamma \}$, $\forall \xi \in \dom(q)$ $p(\xi) = q(\xi)$,
and $p(\gamma) =  \{t\}$. 
It is obvious that  $p$ is an extension of $q$ and it forces that $M[\dot{G}]$ captures $t$ via $\dot{b}_\gamma$.

Now suppose for some $\xi \in \dom(q) \cap N$, $t \in q(\xi)$. In this case assume 
$M \prec H_\kappa$ is in $N$ such that $\dot{E} , \xi $ are in $M$.
Then $q$ forces that $M[\dot{G}]$ captures $t$ via $\dot{b}_\xi$.

For the last case, suppose $t \in \bigcup \range(q)$ but $\forall \xi \in \dom(q) \cap N$ $t \notin q(\xi)$.
Since any element of $\dot{A}$ is an element of $\aleph_2$ many branches  $b_\xi \subset T$
with $\textrm{cf}(\xi) < \omega_1$, by extending $q$
if necessary, we can assume that there is $\tau \in \dom(q) \setminus \alpha_N$ such that $t \in q(\tau)$.
We consider the partition $\dom(q) = H \cup L \cup R$ where $R = \dom(q) \cap N$ (rudimentary ordinals w.r.t. $N$), 
$L = (\dom(q) \cap \alpha_N ) \setminus R$ (low ordinals), and $H = \dom(q) \setminus \alpha_N$ (high ordinals).
Let $B_t$ be the set of all $\xi \in \dom(q)$ such that $t \in q(\xi)$. So $B_t \cap R = \emptyset$ and $\tau \in B_t$.
By Lemma \ref{limit_in_trace} we have the following about the ordinals in $H$:
\begin{equation}\label{H}
\exists \gamma_0 \in N \cap \omega_2 \textrm{ }\forall \gamma \in N \setminus \gamma_0 \textrm{ }\forall \xi \in H \textrm { }
\rho(\gamma , \xi) \geq \rho (\gamma , \alpha_N) 
\end{equation}
For ordinals in $L$, let $\gamma_1 = \max \{ \min((N \cap \omega_2) \setminus \xi ) :\xi \in L \}$. Then 
\begin{equation}\label{L}
\forall \gamma  \in N \setminus  \gamma_1 \textrm{ }
\forall \xi \in L \textrm{ }
\rho(\xi , \gamma) \geq \delta_N.
\end{equation}
In order to see (\ref{L}), fix $\xi \in L$ and let $\xi' = \min(N \cap \omega_2) \setminus \xi$.
Observe that
$\textrm{cf}(\xi')= \omega_1$.
Let $\gamma \in N$ be above $\gamma_1$. 
We show that $\rho (\xi , \gamma) \geq N \cap \omega_1$.
Note that there is $\alpha \in \xi'$ such that for all $\eta \in (\alpha, \xi')$ the ordinal 
$\textrm{otp}(C_{\xi'} \cap \eta)$ appears in the definition of 
$\rho(\eta, \gamma)$. 
Since $\gamma , \xi' $ are in $N$, by elementarity, the witness $\alpha$ exists in $N$.
Since $\xi \in (\alpha, \xi')$
the ordinal
$\textrm{otp}(C_{\xi'} \cap \xi)$ appears in the definition of $\rho(\xi, \gamma)$.
But $\textrm{otp}(C_{\xi'} \cap \xi) \geq \delta_N$, which shows (\ref{L}). 

Now using $(*)$ choose $M \prec H_\kappa$ in $N$ such that $\rho (\alpha_M, \alpha_N) > \delta_M$ and 
such that
$M$ has $\gamma_0, \gamma_1, R, \bigcup \range(q) \cap N, \dot{E}$ as elements.
Let $\gamma_3 > \max\{ \gamma_0, \gamma_1 \}$ be in $M$ such that   for all 
$ \gamma \in M$ that are above $\gamma_3$, 
$\rho(\gamma , \alpha_N) > \delta_M$. 
The ordinal $\gamma_3$ is guaranteed to exist by Lemma 
\ref{cofinal_sequence}.

For every $\xi \in R$ and  $\eta \in B_t$ by the initial segment 
 requirement on the conditions in $Q$, 
$\max(q(\xi) \cap q(\eta)) = \max (q(\tau) \cap q(\xi))$.
If $\max(q(\xi) \cap q(\tau)) \notin M$ for some $\xi \in R$, we are done.
Assume $\max(q(\xi) \cap q(\tau)) \in M,$ for all $\xi \in R$.
By elementarity, fix $\gamma > \gamma_3$ in $M$ such that $\textrm{cf}(\gamma) = \omega$ and 
\begin{equation}\label{R}
\forall \xi \in R \textrm{ }\rho(\xi , \gamma) > \max (q(\tau) \cap q(\xi)).
\end{equation}
Now define $p \leq q$ as follows:
\begin{itemize}
\item $\dom(p) = \dom(q) \cup \{ \gamma \}$,
\item $ \forall \xi \in \dom(q) \setminus B_t$ $p(\xi) = q(\xi)$,
\item $\forall \xi \in B_t$ $p(\xi ) = q(\xi) \cup \{ \delta_M \}$,
\item $p(\gamma) = p(\tau) \cap (\delta_M + 1)$.
\end{itemize}
Obviously, $p$ forces that $M[\dot{G}]$ captures $t$ via $\dot{b}_\gamma$, 
provided that $p \in Q_c$. 
It is obvious that $p$ fulfills the initial segment requirement. 
Moreover, 
$\bigcup \range (p) \setminus \bigcup \range (q) = \{ \delta_M \}$
because $\bigcup \range (q) \cap N \in M$.
We show for all 
$\xi, \eta$ in $\dom(p)$, $\rho\{ \xi, \eta \} > \max(p(\xi) \cap p(\eta)).$ This can be done by managing the following six cases.

First assume that $\xi, \eta$ are in $\dom(q)$ and at least one of them
is not in $B_t$.
Equivalently, $\eta \in \dom(q)$ and  $\xi \in \dom(q)  \setminus B_t$. 
Then $\delta_M \notin p(\xi)$ and $p(\xi) = q(\xi)$.
Hence 
$\max (p(\xi) \cap p(\eta)) = \max (q(\xi) \cap q(\eta))< \rho\{\xi, \eta \}$ because $q$ is a condition in $Q$.

For the second case assume $\xi, \eta$ are both in $B_t$. 
Recall $\delta_m < t$ and $ t \in q(\xi) \cap q(\eta)$. Then 
$\max(p(\xi) \cap p(\eta)) = \max (q(\xi) \cap q(\eta)) < \rho \{ \xi, \eta \}$.
So far we have shown that condition \ref{delta condition} of Definition 
\ref{Q} holds for pairs of ordinals in $\dom(q)$.

For the fourth case assume $\xi \in H$ and $\eta = \gamma$. 
The way we chose $\gamma_3$, 
and  (\ref{H}) guarantees that  
$\rho(\gamma, \xi) \geq \rho(\gamma , \alpha_N) > \delta_M = \max(p(\gamma)).$

For the fifth case assume $\xi \in L$ and $\eta = \gamma$. 
Then  (\ref{L}) implies that 
$\rho(\xi , \gamma) \geq \delta_N > \max(p(\gamma))$.

For the sixth case assume $\xi \in R$ and $\eta = \gamma$. 
Then (\ref{R}) implies that 
$\rho(\xi, \gamma) > \max(q(\tau) \cap q(\xi)) = \max (p(\gamma) \cap p(\xi))$.
 Therefore, $p \in Q_c$.
\end{proof} 

\section{$\rho$ Introduces Aronszajn Subtrees Everywhere in $T$ }
In this section we will use Lemma \ref{rho} to show that every Kurepa subset of the generic Kurepa 
tree has an Aronszajn subtree. 
Here a subset $Y$ of $T$ is said to be a \emph{Kurepa subset} if it is a Kurepa tree when it is considered with the order inherited from $T$. Note that $Y$ is not necessarily downward closed. 
The theorems in this section are not using any large cardinal assumption.

\begin{lem}\label{no_extera_branches}
Assume 
$X \subset  \omega_2$ is uncountable,
$Q_X \lhd Q$ and
$T$ is the generic tree for $Q_X$. 
Then $\{ b_\xi : \xi \in X \}$ is the set of all cofinal branches of $T$ in the forcing extension by $Q_X$.
\end{lem}
\begin{proof}
Assume $P= Q_X$ and $\pi$ is a $P$-name for a branch that is different from all $b_\xi$, $\xi \in X$. 
Inductively construct a sequence $\Seq{p_\eta: \eta \in \omega_1 }$ as follows. 
The condition  $p_0 \in P$ is arbitrary. 
 If $\Seq{p_\eta : \eta < \alpha }$ is given, find $p_\alpha \in P$ such that:
\begin{itemize}
\item $p_\alpha$ decides $\min (\pi \setminus \bigcup 
\{ b_\xi : \xi \in \bigcup \{ \dom(p_\eta) : \eta \in \alpha \} \})$ to be $t_{p_\alpha}$,
\item $t_{p_\alpha} \in \bigcup \range(p_\alpha)$,
\item for every $\beta \in \dom(p_\alpha)$, $\Ht(\max (p_\alpha(\beta))) > \Ht (t_{p_\alpha}).$\footnote{Note that the levels of the generic tree are in the ground model.}
\end{itemize}
Let  $A= \{ p_\alpha : \alpha \in  \omega_1 \}$. 
By going to a subset of $A$ if necessary,
we may assume that $\{\dom(p) : p \in A\}$ forms a $\Delta$-system with 
root $d$. Also $\{\bigcup\range(p):p \in A \}$ forms a $\Delta$-system with root $c$.
Moreover, we may assume that elements of $A$ are pairwise isomorphic structures 
and the isomorphism between them fixes the root.
By Lemma \ref{cc} there is an uncountable set $B \subset A$ such that  for every $p,q$ in $B$
if $\alpha \in \dom(p) \setminus \dom(q)$,
$\beta \in \dom(q) \setminus \dom(p)$, and
 $\gamma \in d$, then 
 \begin{itemize}
 \item $\rho\{\alpha , \beta\} > \max(c)$ and 
 \item $\rho \{\alpha , \beta\} \geq \min \{ \rho\{\gamma, \alpha\}, \rho\{\gamma, \beta\}\}$.
 \end{itemize}
Note that for all $p \in B$, $\bigcup \range(p) \subset \omega_1$. 
So without loss of generality, by replacing $B$ with an uncountable subset if necessary, 
we can assume the following: Whenever $p, q$ are in $B$ either 
\begin{itemize}
\item $c < a = \bigcup \range(p) \setminus c < b = \bigcup \range(q) \setminus c$ or 
\item $c  < b = \bigcup \range(q) \setminus c < a = \bigcup \range(p) \setminus c$.
\end{itemize}

We claim that the elements of $B$ are pairwise compatible.
In order to see this, fix $p,q$ in $B$. 
By symmetry, we can assume that 
$$c < a = \bigcup \range(p) \setminus c < b = \bigcup \range(q) \setminus c.$$
We define the common extension $r$ of $p,q$ on $\dom(p) \cup \dom(q)$
as follows:
For $\gamma \in d$ let $r(\gamma) = p(\gamma) \cup q(\gamma)$, and
for $\alpha \in \dom(p) \setminus \dom(q)$ let $r(\alpha) = p(\alpha)$.
For $\beta \in \dom(q) \setminus \dom(p)$ we have two cases. Either for all 
$\gamma \in d$, $\max(q(\gamma) \cap q(\beta)) \in c$ or 
there is a unique $\gamma \in d$ such that $\max(q(\gamma) \cap q(\beta)) \in b$. 
In the first case let $r(\beta)= q(\beta)$ and in the second case let 
$r(\beta) =p(\gamma) \cup q(\beta)$.
In order to see that there is no possibility outside of these two cases, assume for a contradiction that 
$\gamma_0 , \gamma_1$ are in $d$ and for $i \in 2$,
$\max(q(\gamma_i) \cap q(\beta)) \in b \setminus c$.
In other words, both $q(\gamma_0), q(\gamma_1)$ intersect $q(\beta)$ above $\max(c)$.
So there is $\nu \in b \setminus c$ such that $\nu \in q(\gamma_0) \cap q( \gamma_1)$.
Recall that the elements of $B$ are isomorphic structures via the isomorphisms which fix the roots.
Therefore, for each $s \in B$ there is $\nu_s \in \bigcup \range(s) \setminus c$ such that 
$\nu_s \in s(\gamma_0) \cap s(\gamma_1).$
But this contradicts the fact that $\rho(\gamma_0, \gamma_1)$ is countable,
since $\{ \bigcup \range (s) : s \in B \}$ is an uncountable $\Delta$-system with root $c$.

First we will show that $r$ satisfies Condition \ref{initial segment condition}.
Note that if $\gamma_1, \gamma_2$ are both in $d$ then
$p(\gamma_1) \cap p(\gamma_2) \subset c$ and $q(\gamma_1) \cap q(\gamma_2) \subset c$.
In order to see this, assume this is not the case. 
Then 
by the fact that the conditions in $B$ are pairwise isomorphic,
$\sup \{\max (s(\gamma_1) \cap s(\gamma_2)): s \in B \} = \omega_1$ which implies that 
$\rho(\gamma_1, \gamma_2) \geq \omega_1$. But this is absurd.
Now assume $i \in (p(\gamma_1) \cup q(\gamma_1)) \cap (p(\gamma_2)\cup q(\gamma_2))$,
$j < i$ and $j \in (p(\gamma_1) \cup q(\gamma_1))$.
We will show that $j \in  p(\gamma_2)\cup q(\gamma_2)$.
Note that $j \in c$. 
Then $j \in p(\gamma_1) \cap c = q(\gamma_1) \cap c$. 
Since $p,q$ both satisfy Condition \ref{initial segment condition} and $i \in p(\gamma_2)\cup 
q(\gamma_2) $, we have $j \in p(\gamma_2)\cup q(\gamma_2)$.
If $\alpha \in \dom(p)  \setminus \dom(q)$ and $\gamma \in d$ note that 
$$r(\alpha) \cap r(\gamma)= p(\alpha)\cap (p(\gamma) \cup q(\gamma)) = p(\alpha)\cap p(\gamma). $$
But  $p(\alpha)\cap p(\gamma)$ is an initial segment of both $p(\alpha)$ and $r(\gamma)$ because $a<b$. 
If $\beta \in \dom(q) \setminus \dom(p)$ and for all  
$\gamma \in d$, $\max(q(\gamma) \cap q(\beta)) \in c$
the argument is the same. 
So assume that for a unique $\gamma_\beta \in d$, 
$\max(q(\beta) \cap q(\gamma_\beta)) \in b$.
Then it is easy to see that
 $r(\beta) \cap r(\gamma_\beta)= p(\gamma_\beta) \cup (q(\beta) \cap q(\gamma_\beta))$
 is an initial segment of both $r(\beta), r(\gamma_\beta)$.
 If $\beta \in \dom(q) \setminus \dom(p)$ and $\gamma \in d \setminus \{\gamma_\beta \}$,
 in order to see $r(\beta)\cap r(\gamma)$ is an initial segment of both $r(\beta), r(\gamma)$, 
 note that 
$$r(\beta) \cap r(\gamma)=(p(\gamma_\beta) \cup q(\beta))\cap (p(\gamma) \cup q(\gamma)) \subset c.$$
Then $r(\beta) \cap r(\gamma) = p(\gamma_\beta) \cap p(\gamma)$ which makes Condition \ref{initial segment condition} trivial.
We leave the rest of the cases to the reader.

For Condition \ref{delta condition}, we only verify the case $\alpha \in \dom(p) \setminus \dom(q)$
and $\beta \in \dom(q) \setminus \dom(p) $. 
If for all $\gamma \in d$, $\max(q(\gamma) \cap q(\beta)) \in c$, there is nothing to prove.
Assume for some unique $\gamma \in d$, $\max(q(\gamma) \cap q(\beta)) \in b$. 
Obviously, $r(\alpha) \cap r(\beta)= (p(\alpha) \cap p(\gamma)) \cup (p(\alpha) \cap q(\beta))$.
But $\max (p(\alpha) \cap q(\beta)) \leq \max(c) < \rho \{\alpha , \beta \}$. Moreover, 
$$\max (p(\alpha) \cap p(\gamma)) \leq \max (a) < \min (b) \leq \max (q(\beta) \cap q(\gamma))
\leq \rho\{\gamma, \beta\}.$$
This means that $\max (p(\alpha) \cap p(\gamma)) < \min \{ \rho\{\alpha, \gamma \} , \rho \{ \beta, \gamma\} \} \leq \rho \{ \alpha , \beta \}.$
Therefore, $\max (r(\alpha) \cap r(\beta)) < \rho \{ \alpha , \beta \}$.

We have two possible cases: either there is an uncountable $C \subset B$ such that for all $p \in C$, 
there is $\gamma \in d$ with $t_p \in p(\gamma)$, or there are only countably many $p \in B$ such that
for some $\gamma \in d$, $t_p \in p(\gamma).$
If such an uncountable $C$ exists, let $s \in P$
such that $s$ forces that the generic filter intersects $C$ on an uncountable set.
Then for some $\gamma \in d$, $s \Vdash |\pi \cap b_\gamma| = \aleph_1$. 
But this contradicts the fact that $\pi$ was a name for a branch that is different from all $b_\xi$'s.

Now assume that there is a countable set $D \subset B$
such that if $p \in B$ and for some $ \gamma \in d$, $t_p \in p(\gamma)$ then $p \in D$.
We can choose $p,q$ in $B \setminus D$ such that:
\begin{enumerate}
\item for some $ \alpha  \in \dom(p) \setminus \dom(q)$, $t_p \in p(\alpha),$ 
\item for some $\beta \in \dom(q) \setminus \dom(p)$, $t_q \in q(\beta)$, 
\item $p$ forces that $t_p$ is not in the branches that are indexed by the ordinals in $d$, and 
\item $\max (c) + \omega < t_p$ and $ t_p + \omega < t_q$.
\end{enumerate}
Obviously, (1), (2) are automatically true for any $p,q$ in $B \setminus D$.
We claim that there is at most one $p_\eta \in B$ which does not force that $t_{p_\eta}$
is not in the branches that are indexed by the ordinals in $d$.
In order to see this, assume for a contradiction that $\zeta < \eta < \omega_1$ and 
$p_\eta , p_\zeta $ are counterexamples to our claim.
Then $p_\eta$ decides  $\min (\pi \setminus \bigcup 
\{ b_i : i \in \bigcup \{ \dom(p_j) : j \in \eta \} \})$ to be $t_{p_\eta}$.
In particular, $p_\eta$ forces that $t_{p_\eta} \notin \bigcup\{b_i: i \in \dom (p_\zeta) \}
\supset \bigcup\{b_i: i \in d \}$. 
Therefore, $p_\eta$ satisfies Condition (3), which is a contradiction.
By the same argument, if $p \neq q$ are in $B$ then $t_p \neq t_q$.
Therefore, it is easy to choose $p,q$ in $B$ such that the four conditions above hold.

Let $a,b,c,d$ be as above.
We will find a common extension of $p,q$ which forces that $t_p$ is not below $t_q$.
This contradicts the fact that $\pi$ was a name for a branch.

First consider the case in which for all $\gamma \in d$,  $\max(q(\beta) \cap q(\gamma)) \in c$. 
Let $r$ be the common extension of $p,q$ described as above.
Recall that $r(\beta) = q(\beta )$ in this case. 
Let $\xi \in (t_p, t_p + \omega) \setminus (a \cup b)$.
Note that $\xi > \max(c)$. Let $X = \{ \eta \in \dom (r) : \max(r(\beta) \cap r(\eta)) > \xi \}$.
Obviously, $X \cap \dom(p) = \emptyset$ and $\beta \in X$.
Extend $r$ to $r'$ such that $\dom(r') = \dom(r)$, $r$ and $ r'$ agree on any element of their domain 
which is not in $X$, and $r'(\eta) = r(\eta) \cup \{ \xi\}$ for all $\eta \in X$.
Checking  $r'$ is a condition is routine.
The condition $r'$  forces that in the generic tree $\Ht(\xi) = \Ht(t_p)$ and they are distinct.
Therefore, it forces that $\xi < t_q$ and that $t_p$ is not below $t_q$.

Now assume for some $\gamma \in d$, $\max(q(\beta) \cap q(\gamma)) \in b$.
Again assume that $r$ is the common extension described above.
So $r(\beta) = p(\gamma) \cup q(\beta)$, and $r$ forces that 
$\max(p(\gamma))$ is below $t_q$ in the generic tree.
Recall that $\Ht(\max(p(\gamma))) > \Ht(t_p)$ and $p$ forces that $t_p$
is not in the branches indexed by the ordinals in the root $d$.
Hence $p$ forces that $t_p$ is not below $\max(p(\gamma))$.
Since $r \leq p$, it forces that $t_p$ is not below $t_q$ in the generic tree.
\end{proof}
Now we are ready to prove the main theorem of this section.
\begin{thm}\label{AinK}
It is consistent that there is a Kurepa tree $T$
such that every Kurepa subset of $T$ has an Aronszajn subtree.
\end{thm}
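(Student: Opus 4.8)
The theorem asserts the consistency of the existence of a Kurepa tree $T$ all of whose Kurepa subsets have Aronszajn subtrees. The strategy is to force with the poset $P$ of Definition \ref{P} over a model of $\CH$ (indeed, over any model where $\square_{\omega_1}$ gives us the $\rho$ structure up to $\omega_2$, such as an $\textsc{L}$-generic extension by $\mathrm{coll}(\omega_1,<\lambda)$ for $\lambda$ not Mahlo in $\textsc{L}$), and to take $T$ to be the generic tree $T=(\omega_1,<)$ whose branches are exactly $\{b_\xi:\xi\in\omega_2\}$ by Lemma \ref{no_extera_branches}. Since $P$ has the Knaster condition (hence is ccc and adds no new reals of consequence, and adds no branches to ground-model $\omega_1$-trees), $T$ is a genuine Kurepa tree: it has $\aleph_2$ branches in the extension while $\omega_1,\omega_2$ are preserved, and normality/levels are arranged as in Definition \ref{Q}. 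The real content is to show that \emph{every} Kurepa subset $Y\subseteq T$ (with the inherited order, not necessarily downward closed) has an Aronszajn subtree.

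\textbf{Reducing to weakly external elements.} By Proposition \ref{Asubtree} (applied to $Y$, or rather to the downward closure / a suitable $\omega_1$-subtree derived from $Y$), it suffices to produce, for every large enough regular $\theta$, an element $x$ and a countable $N\prec H_\theta$ with $Y\in N$ such that $x$ is weakly external to $N$, witnessed by the stationary set $\Sigma$ of Proposition \ref{limitrepetition} — i.e.\ no $M\in\Sigma\cap N$ captures $x$. So fix such a $Y$ in $\textsc{V}[G]$, fix $\theta$, and fix a countable $N\prec H_\theta$ with $Y,T\in N$ and $N\cap\omega_2\in S$ (the set from Lemma \ref{rho}); we may arrange $N$ to be of the form $N'[\dot G]$ for a ground-model $N'$, exploiting that $P$ is definable. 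The idea is: since $Y$ is Kurepa, it has $\aleph_2$ cofinal branches, and each such branch is (a cofinal segment of) some $b_\xi$; pick a branch $b$ of $Y$ through a node above $\delta_N=N\cap\omega_1$ whose index $\xi$ lies \emph{outside} $N$, in fact at an ordinal $\tau\in\dom(q)\setminus\alpha_N$ for a generic condition $q$ forcing the relevant node $t$ into $Y$. Now run the trichotomy from the proof of Lemma \ref{rho}: in the two easy cases ($t\notin\bigcup\range(q)$, or $t\in q(\xi)$ for some $\xi\in\dom(q)\cap N$) one directly finds $p\le q$ and $M\prec H_\kappa$ in $N$ with $M$ capturing $t$; to \emph{prevent} capture by all $M\in\Sigma\cap N$ we instead argue that the obstruction in the third case — which, via Lemma \ref{rho}, forces $\rho(\alpha_M,\alpha_N)\le\delta_M$ for club-many $M$, contradicting the capturing requirement — cannot be circumvented. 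More precisely, Lemma \ref{rho} tells us that $\mathcal A$ contains a club, so for club-many $M$ (hence for some $M\in\Sigma\cap N$, since $\Sigma$ is stationary) we have $\rho(\alpha_M,\alpha_N)\le\delta_M$; combined with the $\rho$-structure of the conditions (Condition \ref{delta condition}, resp.\ the strict version in Definition \ref{P}), this is exactly what blocks $M$ from capturing $t$ via $b_\tau$, and Lemma \ref{no_extera_branches} guarantees $b_\tau$ is the only candidate branch through $t$ in $Y$'s relevant cone.

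\textbf{Stitching it together.} So the plan is: (1) establish $T$ is Kurepa in $\textsc{V}[G]$ using Knasterness and Lemma \ref{no_extera_branches}; (2) given $Y$, find in $\textsc{V}[G]$ a node $t$ of height $\ge\delta_N$ in $Y$ lying on a branch $b_\tau$ with $\tau\notin N$ and with $\aleph_2$ branches of $T$ through every node of the relevant cone (this is where we use that $Y$ is Kurepa and that elements of an Aronszajn-seeking subtree have many branches above them, paralleling the hypothesis ``$|\mathcal B(T_t)|=\omega_2$'' used in Lemma \ref{rho}); (3) invoke Lemma \ref{rho} and Proposition \ref{limitrepetition}: the club $\mathcal A$ meets the stationary $\Sigma$ inside $N$, producing $M\in\Sigma\cap N$ with $\rho(\alpha_M,\alpha_N)\le\delta_M$, and the $\rho$-calculus on conditions of $P$ shows no such $M$ captures $t$; hence $t$ is weakly external to $N$ witnessed by $\Sigma$; (4) apply Proposition \ref{Asubtree} to conclude $Y$ has an Aronszajn subtree. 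The main obstacle I expect is step (2)–(3): carefully passing from a Kurepa subset $Y$ (not downward closed!) to the setup where Lemma \ref{rho} applies — one must check that $Y$ being Kurepa forces, generically, the existence of a node $t\in Y$ above $\delta_N$ whose cone in $T$ carries $\aleph_2$ branches (otherwise $Y$ would have at most $\aleph_1$ branches, contradicting Kurepa-ness, using Lemma \ref{countable_preimage} to bound how many $b_\xi$ pass through a fixed node with small $\rho$-value) — and then verifying that the branch-uniqueness from Lemma \ref{no_extera_branches} makes $b_\tau$ the unique threat, so that blocking capture of $t$ via $b_\tau$ blocks capture outright. The density arguments behind finding suitable $p\le q$ are routine given the machinery already developed for $Q$ and $P$.
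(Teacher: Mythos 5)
Your high-level skeleton matches the paper's: force with $P$, use Lemma \ref{no_extera_branches} to identify the branches of $T$ as $\{b_\xi : \xi\in\omega_2\}$, reduce to producing weakly external elements via Proposition \ref{Asubtree}, and invoke Lemma \ref{rho}. But the core of the argument --- the actual verification that no $M\in\Sigma\cap N$ captures the chosen node $t$ --- is missing, and what you sketch in its place does not work as stated. The paper's proof hinges on the subadditivity chain
$\rho(\gamma,\eta)\leq\max\{\rho(\gamma,\alpha_M),\ \rho(\alpha_M,\alpha_{N_\xi}),\ \rho(\alpha_{N_\xi},\eta)\}\leq M\cap\omega_1$
for every $\gamma\in M\cap\omega_2$, which (via the strict inequality in Definition \ref{P}) forces $\Delta(b_\gamma,b_\eta)<\delta_M$ and hence blocks capture by any chain in $M$. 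Lemma \ref{rho} supplies only the middle term. To control the other two you need the setup the paper builds and you omit: a continuous $\in$-chain $\Seq{N_\xi:\xi\in\omega_1}$ with $\mu=\sup(N_{\omega_1}\cap\omega_2)$, $\eta$ chosen above $\mu$ (not merely above $\alpha_N$ for a single $N$), the models arranged so that $\alpha_{N_\xi}$ is a limit point of $C_\mu$ and $N_\xi\cap\omega_2\supseteq C_{\alpha_{N_\xi}}$, and $\xi$ chosen so that the lower trace of the walk from $\eta$ to $\mu$ lies in $N_\xi$, giving $\rho(\alpha_{N_\xi},\eta)=\rho(\mu,\eta)\in N_\xi$ (and then $\in M$); the bound $\rho(\gamma,\alpha_M)\in M$ comes from coherence along limit points of $C_{\alpha_M}$. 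Without this, "the $\rho$-calculus shows no such $M$ captures $t$" is an assertion, not a proof, and it is precisely the part where the freedom to pick $\xi$ inside an $\omega_1$-chain is indispensable.

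Two further genuine defects. First, a quantifier error: weak externality requires that \emph{every} $M\in\Sigma\cap N$ fails to capture $t$, so the witnessing stationary set must be taken inside the club $E$ produced by Lemma \ref{rho} (the paper takes $\Sigma$ to be the $M\in E$ with $M\cap\omega_2$ cofinal in $C_{\alpha_M}$); your "for some $M\in\Sigma\cap N$ ... we have $\rho(\alpha_M,\alpha_N)\leq\delta_M$", and your use of the unmodified $\Sigma$ from Proposition \ref{limitrepetition}, give at most one good $M$, which proves nothing. Second, your plan to "run the trichotomy from the proof of Lemma \ref{rho}" on a condition $q$ forcing $t$ into $Y$ misreads the architecture: that trichotomy is internal to the proof of Lemma \ref{rho} (a forcing argument over $Q_c$ establishing a pure $\ZFC$ fact about $\rho$), whereas the present theorem works entirely in $\textsc{V}[G]$ with an arbitrary Kurepa subset $U$ and uses Lemma \ref{rho} as a black box. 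Relatedly, Lemma \ref{no_extera_branches} does not give a "unique candidate branch through $t$"; what is needed is that every uncountable chain of $T$ (or of $U$, after taking downward closures --- this is how the paper handles the fact that $U$ need not be downward closed) lying in $M$ is contained in some $b_\gamma$ with $\gamma\in M$, all of which are then ruled out simultaneously by the displayed inequality.
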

\begin{proof}
Assume $G$ is a generic filter for the forcing $Q$, and $T$ is the tree introduced by $G$.
Since $Q$ is ccc, it preserves all cardinals and $T$ is a Kurepa tree.

Assume $U$ is a Kurepa subset of $T$, and $X$ is the set of all  $\xi \in \omega_2$ such that
 $ b_\xi \cap U$ is uncountable.
Let $\Seq{N_\xi : \xi \in \omega_1}$ be a continuous $\in$-chain of countable elementary submodels of 
$H_\theta$ such that $U \in N_0$ and
for all $\xi \in \omega_1,$ $N_\xi \in \mathcal{A}$,
where $\mathcal{A}$ is the same club as in Lemma \ref{rho}.
Let $ N_{\omega_1} = \bigcup\limits_{\xi \in \omega_1} N_\xi$, 
 $\mu = N_{\omega_1} \cap \omega_2$. Fix $\eta \in X$ above $\mu$.
By Proposition \ref{Asubtree},
it suffices to show that for some $\xi \in \omega_1$, 
the first element of  $b_\eta \cap U $ whose height is more than $N_\xi \cap \omega_1$ 
is weakly external to $N_\xi$
witnessed by  some stationary set $\Sigma$.

Without loss of generality we can assume that for all $\xi \in \omega_1$:
\begin{itemize}
\item $\alpha_{N_\xi} = \sup(N_\xi \cap \omega_2)$ is a limit point of $C_\mu$, 
\item $N_\xi \cap \omega_2 \supset C_{\alpha_{N_\xi}}$ and
\item $\lim (C_{\alpha_{N_\xi}})$ is a cofinal in $\alpha_{N_\xi}$.
\end{itemize}

In order to see this, let $f$ from $\omega_1$ to $\mu$  be the function which is defined as follows: For each 
$\xi \in \omega_1$, $f(\xi)$ is the least $\zeta \in \omega_1$ with $N_\zeta \supset C_\mu \cap \alpha_{N_\xi}$.
Now observe that if $\xi$ is $f$-closed then it satisfies the second condition.
For the other two conditions, note that the sets 
$\{ \alpha_{N_\xi} : \xi \in \omega_1 \}$ and the set of all $\gamma \in C_\mu$ which
are limit of limit points in $C_\mu$ are clubs in $\mu$.

Let $\xi \in \omega_1$ be such that $\textrm{otp}(C_{\alpha_{N_\xi}}) > \rho(\mu, \eta)$
and for all $\zeta > \xi$, $\rho(\alpha_{N_\zeta}, \eta) > \rho (\mu, \eta)$.
Then note that $\rho(\mu, \eta) \in N_\xi$.
Use Lemma \ref{rho} to find $E\in N_\xi$ which is  a club of countable elementary submodels of 
$H_{\omega_3}$ such that for all $M \in E \cap N_\xi$, $\rho(\mu , \eta) \in M$ and 
$\rho(\alpha_M, \alpha_{N_\xi}) \leq M\cap \omega_1.$
Now let  $\Sigma$ be the set of all  $M \in E$ such that $M \cap \omega_2 \supset C_{\alpha_M}$ and $\lim (C_{\alpha_M})$ is a cofinal subset of $\alpha_M$.
Obviously, $\Sigma$ is stationary and in $N_\xi$.
Let $M \in \Sigma \cap N_\xi$. 
We want to show that $M$ does not capture $b_\eta,$ as a branch of $T$. Equivalently,
for all $b \in  M$ which is a cofinal branch of $T$, $\Delta (b, b_\eta) \in M$.
By the lemma above, it suffices to show that for all $\gamma \in M$, $\rho(\gamma, \eta) \leq M \cap \omega_1$.
Recall that:
$$\rho(\gamma, \eta) \leq \max\{ \rho(\gamma , \alpha_M), \rho(\alpha_M, \mu), 
\rho(
\mu, \eta) \}.$$
Fix $\beta$ which is a limit point of $C_{\alpha_M}$ and which is above $\gamma$. 
Since $\beta\in M$ and $\rho(\gamma , \beta) = \rho(\gamma, \alpha_M)$, we have that  
$$\rho(\gamma , \alpha_M) \in M.$$ 
Since $M \in E$ and $\alpha_{N_\xi} \in \lim(C_\mu)$, we obtain
$$\rho (\alpha_M, \mu) = \rho(\alpha_M, \alpha_{N_\xi}) \leq M\cap \omega_1.$$
Recall that $\rho(\mu , \eta) \in M$. 
Therefore, $\rho(\gamma  , \eta) \leq M \cap \omega_1$.

Now assume  $M \in \Sigma \cap N_\xi$, $t$ is the first element of 
$b_\eta$ whose height is more than $N_\xi \cap \omega_1$.
It suffices to show that $M$ does not capture $t$ as an element in $U$.
Assume $b \subset U$ is a cofinal branch of $U$ which is in $M$ and 
$b$ contains $\{s \in U \cap M: s <t \}$.
Since $t \notin M,$ the set $\{s \in U \cap M: s <t \}$ has order type $M \cap \omega_1$.
Let $b_\gamma$ be the downward closure of $b$ in $T$. 
Then obviously $\gamma \in M$.
But then the order type of $b_\gamma \cap b_\eta$ is at least $M \cap \omega_1$,
which is a contradiction.
\end{proof}
We finish this section by a corollary which relates the theorem above to Martin's Axiom.
\begin{cor}
Assume $\MA_{\omega_2}$ holds and 
$\omega_2$ is not a Mahlo cardinal in $\textsc{L}$.
Then there is a Kurepa tree with the property that every Kurepa subset has an
Aronszajn subtree.
\end{cor}

\section{Taking Komjath's Inaccessible Away}

In this section we will show that if there is an inaccessible cardinal in $\textsc{L}$ then there is a
model of $\ZFC$ in which every Kurepa tree has an Aronszajn subtree.
We will be using the following notation. 
If $G \subset Q$ is a generic filter and $X \subset \omega_2$ with $Q_X \lhd Q$, we use 
$G_X$ in order to refer to $G \cap Q_X$.
If $X \subset A$ and $Q_X \lhd Q_A \lhd Q$,
$R_{X , A}$ refers to the ccc poset such that 
$Q_A= Q_X *\dot{ R}_{X, A}$. 
Note that the generic tree $T$ is in $\textsc{V}[G_X]$
if $X$ is uncountable and $Q_X \lhd Q$.
Then $R_{X, A}$ can be described 
more explicitly in the forcing extension by $G_X$ as follows.
Let $T$ be the generic tree for $Q_X$ and $b_\xi$ be the set of all $ t \in T$ such that $t \in q(\xi)$ for some $ q \in G_X $.
Recall that $b_\xi$ is an uncountable downward closed branch of $T$. 
Moreover, every branch of $T$ in the forcing extension by $G_X$ has to be $b_\xi$ for some $\xi \in X$.
The poset $R_{X, A}$ consists of finite partial functions $p$
from $A \setminus X$ to $T$ such that:
\begin{enumerate}
\item for every $\alpha \in \dom(p)$ and  $\xi \in X$,  
$(p(\alpha) \wedge b_\xi) < \rho\{\xi, \alpha\}$ and
\item for all $\alpha < \beta$ in $\dom(p)$, $(p(\alpha) \wedge p(\beta)) < \rho(\alpha, \beta)$.
\end{enumerate}
In $R_{X, A}$, $q \leq p$ if $\dom(q) \supset \dom(p)$ and 
$p(\alpha) \leq_T q(\alpha)$  for all $\alpha \in \dom(p)$.
We sometimes use the notation $R_A(B)$ in order to refer to 
$R_{A, A \cup B}$ if $A, B$ are disjoint.

Also, for finite $x \subset [\mu, \omega_2)$, let 
$S^\mu[x]$ be the set of all $ \Seq{v_i: i \in |x|} \in T^{[|x|]}$
such that for some $q \in R_{\mu, \omega_2}$:
\begin{itemize}
\item $\dom(q) \supset x$ and
\item for all $i \in |x|$,    $q(x(i))= v_i$.
\end{itemize}
So in particular every condition in $R_{\mu, \omega_2}$ force that 
$\bigotimes\limits_{\alpha \in x} \dot{b}_\alpha \subset S^\mu[x]$. 
For $\alpha \in \omega_2 \setminus \mu$, 
we use $S^\mu[\alpha]$ instead of $S^\mu[\{\alpha \}]$.

\begin{lem}\label{finite}
Assume 
$\omega_1<\mu < \omega_2$, $Q_\mu \lhd Q$ in $\textsc{V}$
and $G \subset Q$ is $\textsc{V}$-generic.
Let $K$ be an $\omega_1$-tree in $\textsc{V}[G_\mu]$
and $b \subset K$ be  a cofinal branch in $\textsc{V}[G]$.
Then there is a finite $x \subset [\mu , \omega_2)$ such that 
$b \in \textsc{V}[G_{\mu \cup x}]$.
\end{lem}
\begin{proof}
Work in $\textsc{V}[G_\mu]$.
Let $T$ be the generic tree that is introduced by $G_\mu$, 
$r \in R_{\mu,  {\omega_2}} \cap G$,
$\tau \subset K \times \{ q \in R_{\mu,  {\omega_2}} : q \leq r \}$ be an 
$R_{\mu,  {\omega_2}}$-name. Assume  
$r \Vdash_{R_{\mu, \omega_2}} ``\tau$ is a cofinal branch of $K$ which is not in 
$\textsc{V}[G_{\mu } * \dot{H}_x]$ for all finite $x \subset [\mu, \omega_2)"$, where $\dot{H}_x$ is the canonical name for 
the $\textsc{V}[G_\mu]$-generic filter of $R_\mu(x)$.
For every $u \in K$, let $C_u=\{q\leq r: q \Vdash_{R_{\mu, \omega_2}} u \in \tau \}$ and let $E_u \subset C_u$  such that:

\begin{enumerate}
\item $E_u$ is an antichain that is maximal in $C_u$.
\item If $q \in E_u$ and $\alpha \in \dom(q)$
then $\Ht_T (q(\alpha)) \geq \Ht_K(u)$.
\item If $q \in E_u$  then $q$ is a one-to-one function whose range consists of the elements of the same height in $T$.

\end{enumerate}  
$R_{\mu, \omega_2}$ is ccc so $E_u$ is countable. 
Let
$\tau'=\{\{u \} \times C_u : u \in K \}$ and
 $\tau'' = \bigcup \{ \{u\} \times E_u: u \in K \}$.
Observe that 
$r \Vdash \tau = \tau' = \tau''$.
Without loss of generality, we assume $\tau = \tau''$, or in other 
words   $\tau[\{u \}] = E_u$      for all $u \in K$.
Let $U$ be the set of all $u \in K$ such that for some $q \leq r$ in 
$R_{\mu, \omega_2}$ the condition $q$ forces that $u \in \tau$.
$R_{\mu, \omega_2}$ is ccc, so
$U$  is a Souslin tree in $\textsc{V}[G_\mu]$.

Let $\Gamma \subset \range(\tau)$ be uncountable such that 
$\{ \dom(p) : p \in \Gamma \}$ forms a $\Delta$-system with root $w$. 
By thinning $\Gamma$ out if necessary, we can assume  
the conditions in $\Gamma$ have the same cardinality $k+ |w| \in \omega$. Note that $r \in R_{\mu}(w)$. 
Observe that $U = \dom(\tau'') = \dom (\tau)$.
Consequently, if $x \subset \omega_2$ is finite such that 
$\{ u \in K : \exists p \in G_{\mu \cup x} \textrm{  } p \Vdash u \in \tau \}$
is a cofinal branch of $K$
then $U$ is not Aronszajn in $\textsc{V}[G_{\mu \cup x}]$.
In order to reach a contradiction, assume in $\textsc{V}[G_\mu]$ that 
\begin{equation}\label{Xi}
r \Vdash_{R_{\mu}(w)}
``U \textrm{ is Aronszajn.}"
\end{equation}

By the second assumption on $E_u$'s, the set $\Gamma_w =\{ p \rest w : p \in \Gamma \}$
is an uncountable subset of $R_\mu(w)$ consisting of conditions 
extending $r$. 
Therefore $R_\mu (w)$ has  an extension $r'$ of $ r$ which forces that
$\Gamma_w \cap \dot{H}_w$ is uncountable.
In order to contradict (\ref{Xi}), we need to work in $\textsc{V}[G_\mu * H_w]$
and some specific forcing extensions of this model. 
Here $H_w \subset R_\mu (w)$ is a $\textsc{V}[G_\mu]$-generic filter which contains $r'$ and consequently intersects 
$\Gamma_w$ on an uncountable set.
Due to similarity of arguments and for easier notation let's assume 
$\Gamma_w \cap G_{ \mu \cup w}$ is uncountable 
and work with $\textsc{V}[G_{\mu \cup w}]$ instead of $\textsc{V}[G_\mu * H_w]$.

Let $ \Acal \in \textsc{V}[G_{\mu \cup w}]$ be the set of all $ p \rest (\dom(p) \setminus w)$ such that 
$p \in \Gamma$ and $p \rest w \in G_{\mu \cup w}$.
By the third assumption on $E_u$'s, 
if $p \in \Gamma$ and $p\rest (\dom(p) \setminus w) \in \Acal$
then $p$ is compatible with every condition in $G_{\mu \cup w}$.
We are done if $\Acal$ is countable. 
In order to see this, assume $\Acal$ is countable. Then there is an uncountable 
$\Gamma' \subset \Gamma$ in $\textsc{V}[G_{\mu \cup w}]$ 
such that: 
\begin{itemize}
\item for all $p \in \Gamma'$, $p \rest w \in G_{\mu \cup w}$ and 
\item for all $p,q$ in $\Gamma'$, $p \rest (\dom(p) \setminus w) = q \rest (\dom(q) \setminus w)$.
\end{itemize} 
In particular $\Gamma'$ is an uncountable collection of pairwise compatible conditions in $\Gamma$. 
But then $\Gamma'$ defines an uncountable branch of $U$ in 
$\textsc{V}[G_{\mu \cup w}]$ which already contradicts (\ref{Xi}).

From now on, assume $\Acal$ is uncountable.
Using forcings described in Lemma \ref{AddAntichain} in finitely many steps, 
we will find an Aronszajn tree preserving and $\omega_2$-preserving forcing extension of 
$\textsc{V}[G_{\mu \cup w}]$
which has an uncountable $\Acal' \subset \Acal$
consisting of pairwise compatible conditions.
But $G_{\mu \cup w} , \Acal', \tau$ together 
can define an uncountable branch in $U$.
This contradicts (\ref{Xi}) because our forcing extension was Aronszajn tree preserving and 
was supposed to keep $U$ Aronszajn.
It is worth noting  that the work behind finding $\Acal'$ requires working with the 
$\rho$-function in the forcing extension. This is where we need 
our forcing extension to preserve $\omega_2$.

For each $p \in \Acal$, let $d_p: k \longrightarrow \dom(p)$ 
be the unique strictly increasing bijection.
Let $\Seq{I_l: 0 < l \leq \frac{k(k+1)}{2}+1}$
be a sequence listing all $I \subset k$ with $0< |I|\leq 2$
such that all singletons are listed before pairs. 
We are going to find  $\Seq{\textsc{V}_l, \Acal_l: l \leq \frac{k(k+1)}{2}+1}$, by induction on $l$, such that: 
\begin{itemize}
\item $\textsc{V}_0 = \textsc{V}[G_{\mu \cup w}]$.
\item $\textsc{V}_{l+1}$ is an Aronszajn tree preserving and $\omega_2$-preserving forcing 
extension of $\textsc{V}_l$.
\item $\Acal_l \in \textsc{V}_l$ is uncountable for all $l$.
\item $\Acal_{l+1} \subset \Acal_l \subset \Acal_0 = \Acal$.
\item If $\{p,q\} \subset \Acal_l$ then $p\rest \{ d_p(n): n \in I_l\}$ and $q \rest \{ d_q(n) : n \in I_l \}$ 
are compatible in $R_{\mu \cup w,  {\omega_2}}$.
\end{itemize}
We proceed by finding $\textsc{V}_l, \Acal_l$ when
$\textsc{V}_{l-1}, \Acal_{l-1} , I_l$ are given.
First assume $0< l \leq k$, which means  $I_l = \{n \}$ for some 
$n \in k$. 
This task can be done by managing the following cases:
\begin{enumerate}
\item The map $p \mapsto p(d_p(n))$ is constant on some uncountable 
subset of $\Acal_{l-1}$.
\item The map $p \mapsto p(d_p(n))$ is countable-to-one and the downward closure of $\{p(d_p(n)): p \in \Acal_{l-1} \}$ has an uncountable branch.
\item The map $p \mapsto p(d_p(n))$ is countable-to-one and the downward closure of $\{p(d_p(n)): p \in \Acal_{l-1} \}$ is Aronszajn.
\end{enumerate}
For the first case, fix uncountable $\Bcal \subset \Acal_{l-1}$ such that 
$p \mapsto p(d_p(n))$ is constant on $\Bcal$.
Let $\nu = p(d_p(n))$ for some (any) $p \in \Bcal$.
Let $\Acal_l \subset \Bcal $ be uncountable such that if  $p \neq q$ 
are  in $\Acal_{l}$ then  $\rho\{d_p(n), d_q(n) \} > \nu$.
It is easy to see that  
$\Acal_l$ together with $\textsc{V}_l = \textsc{V}_{l-1}$  works.

For the second case let 
$W$ be the downward closure of  $\{p(d_p(n)): p \in \Acal_{l-1} \}$ in $T$.
By Lemmas \ref{AddAntichain} and \ref{no_extera_branches},
let $\xi \in \mu \cup w$ such that $b_\xi \subset W$.
Let $\Seq{p_i : i \in \omega_1}$ be a sequence in $\Acal_{l-1}$
such that $\Seq{p_i (d_{p_i}(n)) \wedge b_\xi: i \in \omega_1}$
is strictly increasing.
Let $\Gamma_0 \subset \omega_1$ be uncountable such that 
 $\Seq{\alpha_i = d_{p_i}(n): i \in \Gamma_0}$ and 
 $\Seq{\rho\{\alpha_i, \xi \} : i \in \Gamma_0}$
are both strictly increasing.
Recall that $\rho \{ \alpha_i , \xi \} \geq b_\xi \wedge p_i(\alpha_i)$, so this is possible.
Find uncountable $\Gamma_1 \subset \Gamma_0$ such that 
$\rho(\alpha_i, \alpha_j) \geq \min \{ \rho\{ \alpha_i, \xi \}, \rho\{ \alpha_j, \xi\} \}$
for $i < j$ in $\Gamma_1$.
In order to see $\Acal_l = \{ p_i : i \in \Gamma_1 \}$ and 
$\textsc{V}_l = \textsc{V}_{l-1}$ work, assume for a contradiction 
that $p_i (\alpha_i) \wedge p_j (\alpha_j)  \geq  \rho ( \alpha_i , \alpha_j)$
for some $i < j$  in $\Gamma_1$. Then 
$$\rho\{ \xi , \alpha_i \} >  p_i(\alpha_i) \wedge b_\xi = p_i(\alpha_i) \wedge p_j(\alpha_j) \geq \rho(\alpha_i, \alpha_j) \geq \rho\{\alpha_i, \xi \}, $$
which obviously is a contradiction.

For the third case, let $W$ be a pruned downward closed uncountable 
subtree of the downward closure of $\{p(d_p(n)): p \in \Acal_{l-1} \}$ in $T$.
Let $\textsc{V}_l$ be a forcing extension 
of $\textsc{V}_{l-1}$ which preserves Aronszajn trees and $\omega_2$ and which adds an uncountable 
antichain $A \subset W$. 
From now on we work in $\textsc{V}_{l}$.
Fix $\gamma > \sup \{ d_p(n) : p \in \Acal_{l-1} \}$ in $\omega_2$ and $\Seq{t_i : i \in \omega_1}$ in $A$ such that if $i<j$ then $\Ht(t_i) < \Ht(t_j)$.
Since $W$ is pruned,  for every $t \in W$ there are 
uncountably many $p$ in $ \Acal_{l-1}$ with $t \leq_T p(d_p(n))$.
Since $\omega_2$ is preserved, the square sequence of $\textsc{V}_{l-1}$ is a square sequence in $\textsc{V}_l$.
Therefore,
for each $i \in \omega_1$  there is $p_i \in \Acal_{l-1}$ such that $t_i \in \rho(d_{p_i}(n), \gamma)$ and $t_i <_T p_i(d_{p_i}(n))$. Let $\alpha_i = d_{p_i}(n)$.
Find uncountable $\Gamma_0 \subset \omega_1$ such that $\Seq{\alpha_i: i \in \Gamma_0}$ and $\Seq{\rho(\alpha_i, \gamma): i \in \Gamma_0}$ are both strictly increasing.
Find uncountable $\Gamma_1 \subset \Gamma_0$ such that 
$$\rho(\alpha_i, \alpha_j) \geq \min\{\rho(\alpha_i, \gamma), \rho(\alpha_j, \gamma) \}$$
whenever $i< j$ in $\Gamma_1$. In order to see $\Acal_l = \{ p_i: i \in \Gamma_1 \}$ works, assume $i<j$ are in $\Gamma_1$.  Then 
$$p_i(\alpha_i) \wedge p_j(\alpha_j)< t_i< \rho(\alpha_i, \gamma)=\min\{\rho(\alpha_i, \gamma), \rho(\alpha_j, \gamma) \} \leq \rho(\alpha_i, \alpha_j),$$
as desired. This finishes our induction for the singleton sets $I_l$. 

Before we deal with the the induction steps in which $I_l$ is a pair, let's make an observation.
\begin{obs}\label{dichotomy}
Assume $\mathbb{V}$ is a forcing extension of $\textsc{V}[G_{\mu \cup w}]$
by a forcing described in Lemma \ref{AddAntichain}.
Let $m<n<k$ and assume in $\mathbb{V}$,  
$\Bcal \subset  \Acal $ is uncountable 
such that 
the maps $p \mapsto p(d_p(n))$ and $p \mapsto p (d_p(m))$ are countable-to-one on $\Bcal$.
Then either 
\begin{enumerate}
\item[(a)]
there are incomparable $ s,t$  in $ T$  and uncountable $\Bcal_0 \subset \Bcal$  such that for all $p \in \Bcal_0$, $s <_T p (d_p(m))$ and $t <_T p (d_p(n))$, or 
\item[(b)] 
$\{ p( d_p(m)) \wedge p( d_p(n)): p \in \Bcal\}$ is uncountable.
\end{enumerate}
\end{obs}
\begin{proof}[Proof of observation \ref{dichotomy}]
Assume  $\{ p( d_p(m)) \wedge p( d_p(n)): p \in \Bcal\}$ is countable.
Assume  $u \in T$ such that for 
uncountably many $p \in \Bcal$, $p( d_p(m)) \wedge p( d_p(n)) =u$
and let $\delta = \Ht(u) +1$.
Then there are incomparable $s,t$ above $u$ in $T_\delta$ such that 
$$\Bcal' = \{p \in \Bcal: (p ( d_p(m))\rest (\delta+1), p ( d_p(n))\rest (\delta+1)) = (s,t)\}$$
is uncountable.
Since both maps $p \mapsto p (d_p(m))$ and $p \mapsto p ( d_p(n))$ are countable-to-one, 
there is an uncountable $\Bcal_0 \subset \Bcal'$ as desired in (a).
Therefore, the dichotomy in Observation \ref{dichotomy} holds.
\end{proof}

Assume $\textsc{V}_{l-1}, \Acal_{l-1}, I_l$ are given and $I_l = \{m,n\}$ is a pair.
Based on observation \ref{dichotomy}, 
we can assume at least one of the following cases holds:
\begin{enumerate}
\item[(0)] At least one of the maps $p \mapsto p(d_p(n))$ or 
$p \mapsto p(d_p(m))$ is not countable-to-one on $\Acal_{l-1}$.
\item[(a)] 
There are incomparable $ s,t$  in $ T$  and uncountable $\Bcal_0 \subset \Acal_{l-1}$  such that for all $p \in \Bcal_0$, $s <_T p ( d_p(m))$ and $t <_T p ( d_p (n))$.
Moreover, the maps $p \mapsto p(d_p(n))$ and $p \mapsto p(d_p(m))$
are countable-to-one on $\Acal_{l-1}$.
\item[(b.1)] The downward closure of $\{ p (d_p(m)) \wedge p( d_p(n)): p \in \Acal_{l-1}\}$  in $T$ has an uncountable branch and the maps $p \mapsto p(d_p(n))$ and $p \mapsto p(d_p(m))$
are countable-to-one on $\Acal_{l-1}$.
\item[(b.2)] The downward closure of $\{ p (d_p(m)) \wedge p( d_p(n)): p \in \Acal_{l-1}\}$ in $T$ is an Aronszajn tree and the maps $p \mapsto p(d_p(n))$ and $p \mapsto p(d_p(m))$
are countable-to-one on $\Acal_{l-1}$.

\end{enumerate}

For case (0), the forcing extension is the trivial forcing extension. Find uncountable $\mathcal{B} \subset \Acal_{l-1}$ and $t \in T$ such that one of the maps   $p \mapsto p(d_p(n))$ or $p \mapsto p(d_p(m))$
is constantly $t$ on $\Bcal$.
Let $\nu = t+1$ and let $\Acal_l \subset \Bcal$ be uncountable such that for $p \neq q$
in $\Acal_l$, $\rho \{d_p(n), d_q(m) \} > \nu$. So
for all distinct $p,q$ in $\Acal_l$,
$p (d_p(n)) \wedge q( d_q(m)) < \nu < \rho\{d_p(n),d_q(m) \}$.
By the symmetry and since we have already dealt with the one element subsets of $k$, this finishes case (0).

For case (a), the forcing extension is the trivial forcing extension. Fix $s,t, \Bcal_0$ as in (a) of Observation \ref{dichotomy}.
Let $\Acal_{l} \subset \Bcal_0$ be uncountable such that for $p \neq q$
in $\Acal_l$, $t < \rho\{d_p(n), d_q(m) \}$.
Then for all $p \neq q$ in $\Acal_l$, 
$p(d_p(n)) \wedge p (d_q(m)) = s \wedge t <t< \rho\{d_p(n), d_q(m) \}$. Because of symmetry and the fact that
we dealt with the one element sets in the previous steps,
this finishes case (a).

For case (b.1), 
the forcing extension is the trivial forcing extension.
Assume  $W$ is the downward closure of the uncountable 
set 
$\{ p(d_p(m)) \wedge p (d_p(n)): p \in \Acal_{l-1}\}$ in $T$. 
Using Lemmas \ref{AddAntichain} and \ref{no_extera_branches}, let
$\xi \in \mu \cup w$ such  that $b_\xi \subset W$.
We can find  $\{p_i : i \in \omega_1\} \subset \Acal_{l-1}$ 
such that 
$\Seq{p_i (d_{p_i}(m)) \wedge p_i( d_{p_i}(n)) \wedge b_\xi : i \in \omega_1}$ is strictly increasing.
Find uncountable $\Gamma_0 \subset \omega_1 $ such that the sequences \begin{itemize}
\item $\Seq{\alpha_i =d_{p_i}(n) : i \in \Gamma_0}$, 
\item 
$\Seq{\beta_i =d_{p_i}(m) : i \in \Gamma_0}$, 
\item
$\Seq{\{(p_i(\alpha_i) \wedge b_\xi), (p_i(\beta_i) \wedge b_\xi) \}: i \in \Gamma_0}$, 
\item $\Seq{\{ \rho \{\alpha_i, \xi \}, \rho \{\beta_i, \xi \}\}: i \in \Gamma_0}$
\end{itemize}
are all strictly increasing.
Find uncountable $\Gamma_1 \subset \Gamma_0$ such that 
\begin{equation} \label{root1}
\rho \{\alpha_i, \beta_j \} \geq \min \{\rho\{ \alpha_i, \xi \}, \rho \{\beta_j, \xi \} \},
\end{equation}

for $i \neq j$ in $\Gamma_1$. 

Assume $i < j$ are in $\Gamma_1$. Then 
$$p_i(\alpha_i) \wedge p_j(\beta_j)= p_i(\alpha_i) \wedge b_\xi < \rho\{\alpha_i, \xi \} =\min \{ \rho \{\alpha_i, \xi\},  \rho \{ \beta_j, \xi \} \}.$$
From (\ref{root1}) it follows that $p_i(\alpha_i) \wedge p_j(\beta_j) < \rho\{ \alpha_i, \beta_j \}$. 
Again, by symmetry and the fact that we have already dealt with the one element sets 
before, 
$\Acal_l = \{ p_i : i \in \Gamma_1 \}$ and $\textsc{V}_l = \textsc{V}_{l-1}$ works.
This finishes case (b.1).

In case (b.2), let $W$ be the downward closure of the uncountable set 
$\{ p(d_p(m)) \wedge p ( d_p(n)): p \in \Acal_{l-1}\}$ in $T$. 
Let $W'$ be an uncountable downward closed pruned subtree of $W$.
Let $\textsc{V}_l$ 
be a forcing extension of $\textsc{V}_{l-1}$ in which 
$W'$ has an uncountable antichain $A$ and which 
preserves $\omega_2$ and all the Aronszajn trees of 
$\textsc{V}_{l-1}$. 
Work in $\textsc{V}_l$ and 
let $\{ t_i : i \in \omega_1 \} \subset A$ such that 
$\Seq{\Ht_T (t_i) : i \in \omega_1}$ is strictly increasing.
Let $\gamma \in \omega_2$ be above all ordinals in
$ \{p(d_p(n)) + p( d_p(m)) : p \in \Acal_{l-1} \}$.
For each $i \in \omega_1$, find $p_i \in \Acal_{l-1}$
such that 
\begin{itemize}
\item $t_i <_T (p_i(\alpha_i) \wedge p_i(\beta_i)) $ where $\alpha_i = d_{p_i}(n)$ and $\beta_i = d_{p_i}(m)$,
\item $t_i \in \rho(\alpha_i, \gamma)$, and 
\item $t_i \in \rho (\beta_i, \gamma)$.
\end{itemize}
This is possible because the maps $p \mapsto p(d_p(n))$ and $p \mapsto p(d_p(m))$ are countable-to-one and 
$W'$ is pruned. 
Let $\Gamma_0 \subset \omega_1$ be uncountable such that:
\begin{itemize}
\item $\rho \{ \alpha_i, \beta_j  \} \geq \min \{ \rho(\alpha_i, \gamma)     , \rho(\beta_j , \gamma)  \}$
for all distinct $i,j$ in $\Gamma_0$, and
\item $\Seq{\{\rho(\alpha_i, \gamma)    ,\rho(\beta_i, \gamma) \}: i \in \Gamma_0}$ is strictly increasing.
\end{itemize}
Now we show that $\Acal_l = \{ p_i: i \in \Gamma_0 \}$ works.
Assume $i<j$ in $\Gamma_0$. Then 
$$p_i(\alpha_i) \wedge p_j(\beta_j) < t_i \in \rho(\alpha_i, \gamma) = \min \{ \rho(\alpha_i, \gamma), \rho(\beta_j, \gamma) \} \leq \rho(\alpha_i, \beta_j).$$
As in the previous case, by symmetry and the fact that we have already dealt  with the one element $I_l$'s, this finishes the work for case (b.2).

Work in  $\textsc{V}_l$. Let $\Acal'= \Acal_l$, and $b$ be the set 
of all $ u \in U$ such that $p \rest (\dom(p) \setminus w) \in \Acal'$ for some $p \in E_u$.
Obviously, $b \in \textsc{V}_l$.
The uncountability of $b$ follows from the fact that $\Acal'$ is uncountable and $E_u$ is countable for every $u \in U$.

In order to see $b$ consists of pairwise compatible 
elements of $U$,  assume $u$ and $v$ are in $b$.
Let $p,q$ in $ \Gamma$ witness that $u$ and $v$ are in $b$.
Then by definition of $\Acal$, $p \rest w$ and $q \rest w$
are in $G_{\mu \cup w}$.
In particular, $p \rest w$ is compatible with $q \rest w$.
On the other hand $p \rest (\dom(p) \setminus w)$ is compatible 
with $q \rest (\dom(q) \setminus w)$, since they are in $\Acal'$.
By the third requirement on $E_u$ and $E_v$, the conditions 
$p,q$ are compatible. But then their common extension, 
which is also an extension of $r$, forces that $u,v$ are 
comparable in $U$. 
This is because $\tau$ is forced by $r$ to become a branch.
This shows that the downward closure 
of $b$ is a cofinal branch of $U$ in $\textsc{V}_l$.
By Lemma \ref{AddAntichain}, $\textsc{V}_l$ is an Aronszajn tree 
preserving forcing extension of $\textsc{V}[G_{\mu \cup w}]$. 
So $U$ cannot be Aronszajn in $\textsc{V}[G_{\mu \cup w}]$.
This contradicts (\ref{Xi}).
\end{proof}

Now we are ready to prove Theorem \ref{main}.
Assume $\lambda$ is the first inaccessible cardinal in 
$\textsc{L}$ and $\textsc{V}$ is the generic extension of $\textsc{L}$ by 
the Levy collapse forcing with countable conditions which makes $\lambda$ the second uncountable 
cardinal. 
Note that $\textsc{V}$ is a model of $\square_{\omega_1}$.
Assume $G \subset Q$ is $\textsc{V}$-generic
and $T, \Seq{b_\xi: \xi \in \lambda}$ are the generic tree 
and branches that are defined from $G$ as usual.
We show for every Kurepa tree $K$ in $\textsc{V}[G]$
there is a Kurepa subtree of $T$
which club embeds into $K$.
By Theorem \ref{AinK}, this finishes the proof of Theorem \ref{main}.
 
Assume for a contradiction that $K \in \textsc{V}[G]$ is a Kurepa tree,  
$\dot{K}$ is a $Q$-name for $K$,
and $p_0 \in G$ forces that 
$\dot{K}$ is a Kurepa tree such that no Kurepa subtree of $\dot{T}$
club embeds into $\dot{K}$.
Let 
$\mu_0 \in \omega_2$ such that $Q_{\mu_0} \lhd Q$, $K $ and $T$ 
are in $ V[G_{\mu_0}]$
and $p_0 \in G_{\mu_0}$.
Note that in $\textsc{V}[G_{\mu_0}]$, 
\begin{equation}\label{Ass}
R_{\mu_0, \omega_2} \Vdash
``\textrm{no Kurepa subtree of } \check{T} \textrm{ club embeds into } \check{K}."
\end{equation}

Let $Y \in \textsc{V}[G_{\mu_0}]$ be the set of all $(\tau, p, x, A)$ such that:
\begin{itemize}
\item[$(a_0)$] $x$ is a finite subset of $[\mu_0, \omega_2)$, 
\item[$(a_1)$] $\tau$ is an $R_{\mu_0}(x)$-name,
\item[$(a_2)$] $p \Vdash_{R_{\mu_0}(x)}``\tau$ is a cofinal 
branch of $\check{K}$ which is not in 
$\textsc{V}[G_{\mu_0} * \dot{H}_{x'} ]$, for any finite $x'$ which is a proper subset of $x$", 
where $\dot{H}_{x'}$ is the canonical name for the $\textsc{V}[G_{\mu_0}]$-generic filter of
$R_{\mu_0}(x')$,
\item[$(a_3)$] $p$ is a one-to-one function, $\dom(p)=x$
and $\range(p)$ consists 
of the elements of the same height in $T$,
\item[$(a_4)$] $A= \{ u \in K: \exists q \in R_{\mu_0}(x)$ $q\leq p \wedge q \Vdash \check{u} \in \tau \}$.
\end{itemize}
For $i \in \{1,2,3,4\}$ let $Y_i$ be the projection of $Y$ on the $i$'th
component. 
By Lemmas \ref{small_extension} and \ref{finite}, $|Y_3| = \aleph_2$. 
Let $\Seq{x_\xi: \xi \in \omega_2}$ be an enumeration of $Y_3$.


Let $n \in \omega$ and
$\Gamma_0 \subset \omega_2$ be of size $\aleph_2$
such that $\{x_\xi: \xi \in \Gamma_0 \}$ is a $\Delta$-system 
with root $w$ and 
$|x_\xi|= n+|w|$
for  $\xi \in \Gamma_0$.
By thinning $\Gamma_0$ out if necessary 
we can assume that $\Seq{y_\xi = x_\xi \setminus w : \xi \in \Gamma_0}$ is strictly increasing.
For every $\xi \in \Gamma_0$ let $\tau'_\xi, p'_\xi, A'_\xi$ be such that
$(\tau'_\xi, p'_\xi, x_\xi, A'_\xi) \in Y$.
By thinning $\Gamma_0$ out again we assume that 
for all $i \in n + |w|$, 
$p'_\xi(x_\xi(i))$ does not depend on $\xi$.
There is a condition $r \in R_{\mu_0, \omega_2}$ which forces that 
for $\aleph_2$ many $\xi \in \Gamma_0$, $p'_\xi$ is in the generic filter $\dot{H}_{[\mu_0, \omega_2)}$, 
where $\dot{H}_{[\mu_0, \omega_2)}$ is the canonical name 
for the $\textsc{V}[G_{\mu_0}]$-generic filter of $R_{\mu_0, \omega_2}$. 
In order to contradict (\ref{Ass}), we need to work with a 
$\textsc{V}[G_{\mu_0}]$-generic filter of $R_{\mu_0, \omega_2}$
which intersects $\{ p'_\xi : \xi \in \Gamma_0\}$ on a set of size $\aleph_2$.
Due to similarity of arguments and for easier notation let's assume 
without loss of generality that 
\begin{equation}\label{large}
|G \cap \{ p'_\xi : \xi \in \Gamma_0\}| = \aleph_2.
\end{equation}

Fix $\mu \in \omega_2 \setminus \mu_0$  above $\max(w)$ such that $Q_\mu \lhd Q$.
From now on, we work in $\textsc{V}[G_\mu]$ unless otherwise stated.
Define  $\Gamma_1 \in \textsc{V}[G_\mu]$ to be the set of all $\xi \in \Gamma_0$ such that 
$\min(y_\xi) > \mu$ and 
$p'_\xi \rest w \in G_\mu$.
Obviously  $|\Gamma_1| = \aleph_2$ by (\ref{large}).
For each $\xi \in \Gamma_1$ let 
$p_\xi = p'_\xi \rest y_\xi$.
Note that by $(a_3)$ and the definition of $\Gamma_1$, $p_\xi$ is compatible 
with every condition in $G_\mu$.
Via the natural transition of objects $\tau'_\xi, A'_\xi$  
from $\textsc{V}[G_{\mu_0}]$ to 
$\textsc{V}[G_\mu]$,
we can find 
$\tau_\xi, A_\xi$ in $\textsc{V}[G_\mu]$ such that 
for all $\xi \in \Gamma_1$
the statement $(a_i)$ above 
implies $(b_i)$ below:
\begin{itemize}
\item[$(b_1)$] $\tau_\xi$ is an $R_{\mu}(y_\xi)$-name,
\item[$(b_2)$] $p_\xi \in R_{\mu}(y_\xi)$ forces that $\tau_\xi$ is a cofinal 
branch of $\check{K}$ which is not in $\textsc{V}[G_{\mu}]$, 
\item[$(b_3)$] $p_\xi$ is a one-to-one function and the elements in $\range(p_\xi)$ have the same height in $T$,
\item[$(b_4)$] $A_\xi= \{ u \in K: \exists q \in R_{\mu}(y_\xi)$ $q\leq p_\xi \wedge q \Vdash \check{u} \in \tau_\xi \}$.
\end{itemize}
We only show how we obtain $ (b_2)$.
Assume for a contradiction that $\xi \in \Gamma_1$,
$r \in G_\mu \cap R_{\mu_0, \mu}$ is an extension of  $p'_\xi \rest w$
and $\bar{p}_\xi \in R_{\mu, \omega_2}$ is an extension of $p_\xi$ such that:
$$r * \bar{p}_\xi\Vdash_{R_{\mu_0, \omega_2}} \tau'_\xi \textrm{ is a cofinal branch in } 
\textsc{V}[G_{\mu_0}*\dot{H}_{\mu_0, \mu}].$$
Since $r*\bar{p}_\xi$ extends $p'_\xi$, by $(a_2)$, 
$$r * \bar{p}_\xi\Vdash_{R_{\mu_0, \omega_2}} \tau'_\xi \textrm{ is a cofinal branch in } \textsc{V}[G_{\mu_0}*\dot{H}_{\mu_0, \mu}] \cap \textsc{V}[G_{\mu_0} * \dot{H}_{x_\xi}].$$
This contradicts $(a_2)$ because 
by Lemma \ref{no_extera_branches},
for every $\textsc{V}[G_{\mu_0}]$-generic filter $H \subset R_{\mu, \omega_2}$, 
$\textsc{V}[G_{\mu_0}* H_x] \cap \textsc{V}[G_{\mu_0} * H_{\mu_0, \mu}] = \textsc{V}[G_{\mu_0} * H_{x \cap \mu}]$ and 
$x_\xi \cap \mu$ is a proper subset of $x_\xi$.
Hence $(b_2)$ holds.

Note that by Lemma \ref{small_extension}, all finite powers of $T$ and $K$ have at most $\aleph_1$ many cofinal branches and 
Souslin subtrees in $\textsc{V}[G_\mu]$.
Let $\Gamma_2 \subset \Gamma_1$ be of size $\aleph_2$ such that for all $\xi $ and $\eta$ in $\Gamma_2$ the following hold:
\begin{itemize}
\item $S^\mu[y_\xi(i)] = S^\mu[y_\eta(i)]$ for all $i \in n $, 
\item $S^\mu[y_\xi] = S^\mu [y_\eta]$,
\item $A_\xi = A_\eta$.
\end{itemize}
Observe that if $y \in \{ y_\xi: \xi \in \Gamma_2 \}$ and 
$\bar{v}=\Seq{v_i: i \in n}$ is an element of $ S^\mu[y] $,
and $v_i$'s are pairwise distinct
then
$\bigotimes\limits_{i \in n}(S^\mu[y(i)])_{v_i} = (S^\mu[y])_{\bar{v}}$.
Moreover, this tree does not depend on the choice of $y$.
For $i \in n$, let $t_i = p_\xi (y_\xi(i))$
for some $\xi \in \Gamma_2$. 
The properties of  $\Gamma_0$ guarantees that $t_i$ does not 
depend on the choice of $\xi$.
Let $\Gamma_3 \subset \Gamma_2$ with $|\Gamma_3| = \aleph_2$
such that
 if $\xi < \eta$ are in $\Gamma_3$, $\alpha \in y_\xi,$ 
$\beta \in y_\eta$, then $\rho(\alpha, \beta) > \max\{t_i : i \in n \}$.

For every $\zeta \in \Gamma_3$ define $\varphi_\zeta$ from    
$\bigotimes\limits_{i \in n}(S^\mu[y(i)])_{t_i}$
to the poset consisting of all extensions of 
$p_\zeta= \{ (y_\zeta(i), t_i): i \in n \}$ 
in $R_{\mu}( y_\zeta)$
as follows. 
For every $\bar{s}= \Seq{s_i: i \in n}$ in $ \bigotimes\limits_{i \in n}(S^\mu[y(i)])_{t_i}$, 
let $\varphi_\zeta(\bar{s})$ be the function defined on $y_\zeta$
which sends $y_\zeta(i)$ to $s_i$.
It is easy to see that $\varphi_\zeta$ is  an isomorphism from its domain to a dense subset of 
the set of all extensions of $p_\zeta$ in $R_\mu( y_\zeta)$.
Let $S = \bigcup\limits_{i \in n}(S^\mu[y(i)])_{t_i}$ 
and $U=A_\zeta$.
Obviously, $U$ is Souslin in $\textsc{V}[G_\mu]$.
Also $\textsc{V}[G_\mu]$ thinks that there is a derived tree of $S$, namely $ \bigotimes\limits_{i \in n}(S^\mu[y(i)])_{t_i}$, which adds a branch to $U$.
\begin{claim}\label{free}
All derived trees of $S$ are Souslin in $\textsc{V}[G_\mu]$.
\end{claim}
\begin{proof}
Assume $\Seq{s^i_j:i \in n \wedge j \in m}$
are pairwise distinct elements of $S$ with the same height $\delta$ such that 
$t_i \leq s^i_j $
for all $i,j$.
We will show that $\prod\{S_{s^i_j}: i \in n \wedge j \in m \}$
is the set of all possible points of a branch  of $T^{[mn]}$
which is added by a ccc poset in $\textsc{V}[G_\mu]$. 
Let $\Seq{\xi_j: j \in m}$ be a strictly increasing sequence  
in $\Gamma_3$
such that for all $j<k<m$
if $\alpha \in y_{\xi_j}$ and $\beta \in y_{\xi_k}$ then $\rho(\alpha, \beta) > \delta + \omega$.
Let $z_j = y_{\xi_j}$.
Define $p : \bigcup\limits_{j \in m}z_j  \longrightarrow T$ by 
$p(z_j(i))= s^i_j$. 
By the requirement on $\Gamma_3$ and the fact that $\varphi_{\xi_j}$ is an isomorphism, 
$p \rest z_j \in R_\mu( z_j)$ for all $j \in m.$
The way we chose the  $z_j$'s implies 
that $p \in R_\mu( \bigcup\limits_{j \in m}z_j)$.

Obviously, the set of all extensions of $p$ in $R_\mu( \bigcup\limits_{j \in m}z_j)$
is a ccc poset in $\textsc{V}[G_\mu]$ and it adds a new branch to 
$T^{[mn]}$.
We show that the set
$\prod\{S_{s^i_j}: i \in n \wedge j \in m \}$
is the set of all possible points of this branch.
In order to see this, assume $a^i_j \geq s^i_j$ is in $S^{\mu}[y(i)]$.
Then the function $r$ on $ \bigcup\limits_{j \in m}z_j$ defined by $r(z_j(i))= a^i_j$ is a condition in $R_\mu (\bigcup\limits_{j \in m}z_j).$ 
This can be seen in the same way as we showed $p \in R_\mu(\bigcup\limits_{j \in m}z_j)$.
Moreover, $r$ forces that $\Seq{a^i_j: i \in n \wedge j \in m}$
is in the new branch that is added by $R_\mu(\bigcup\limits_{j \in m}z_j)$.
Therefore,
$\prod\{S_{s^i_j}: i \in n \wedge j \in m \}$
is the set of all possible points of the new branch that is added by 
$R_\mu(\bigcup\limits_{j \in m}z_j)$, 
which is a ccc poset in $\textsc{V}[G_\mu]$. 
This shows the derived tree of $S$ generated by
 $\Seq{s^i_j:i \in n \wedge j \in m}$
is a Souslin tree.
\end{proof}

\begin{claim}\label{everywhere}
Assume $\Seq{v_j: j \in k}$ is  a sequence of pairwise distinct elements of the same height in $S$.
Then in $\textsc{V}[G_\mu]$,  there is a condition $q$ in 
$R_{\mu,\omega_2}$ which forces that each $S_{v_j}$ is Kurepa.
\end{claim}
\begin{proof}
Fix $\Gamma_4 \subset \Gamma_3$ such that $|\Gamma_4|= \aleph_2$ and for all $\xi < \eta$ in $\Gamma_4$, for all $\alpha \in y_\xi$, for all $\beta \in y_\eta$, 
$$\rho(\alpha , \beta) > \max\{v_i: i \in k \}.$$
For every increasing  $\sigma = \Seq{\xi_l: l \in k }$ in $ \Gamma_4$, let $q_\sigma : \bigcup\limits_{l \in k}y_{\xi_l} \longrightarrow S $ 
be a function such that $q_\sigma(y_{\xi_l}(i)) = v_j$ if 
$v_j$ is the $l$'th ordinal in $\Seq{v_j: j \in k}$ that is above $t_i$ in $T$.
If there is no $l$'th ordinal in $\Seq{v_j : j \in k}$ that is above $t_i$ in $T$, 
let $q_\sigma(y_{\xi_l}(i)) = t_i$.
The same argument as in Claim \ref{free} shows that $q_\sigma \in R_{\mu, \omega_2}.$

Let $\Gamma_5 \subset [\Gamma_4]^k$ be a collection 
of pairwise disjoint sets with $|\Gamma_5| = \aleph_2$.
Since $R_{\mu, \omega_2}$ is ccc,
there is a condition $q \in R_{\mu, \omega_2}$
which forces that for $\aleph_2$ many $\sigma \in \Gamma_5$, 
$q_\sigma$ is in the generic filter.
But then $q$ forces that  $S_{v_j}$ is Kurepa for all $j \in k$.
\end{proof}

In $\textsc{V}[G_\mu],$ let $\bigotimes\limits_{i \in k} S_{v_i}$ be a derived tree of  $S$ which adds a branch to $U$ and 
which has the minimum dimension with this property.
Such a derived tree exists because 
$\bigotimes\limits_{i \in n} S_{t_i}$ adds a branch to $U$.
By Lemma \ref{AS} and Claim \ref{free}, 
there is a club embedding $f$ from 
 $\bigotimes\limits_{i \in k} S_{v_i}$ to $U$ in $\textsc{V}[G_\mu]$.
By Claim \ref{everywhere}, there is a condition $q \in R_{\mu, \omega_2}$ which forces that all $S_{v_i}$'s are Kurepa subtrees 
of $T$ in $\textsc{V}[G]$.
Let $j \in k$ and  $c_i \in \textsc{V}[G]$ be a cofinal branch of $S_{v_i}$, for $i \in k \setminus \{j \}$. 
In $\textsc{V}[G]$ let $g$ be the restriction of $f$ to the tree 
$(\bigotimes\limits_{i \in k \setminus \{j \}}c_i) \bigotimes S_{v_{j}}$, 
which obviously is isomorphic to $S_{v_{j}}$.
Then  $q$ forces that $g$ is  a club embedding from an isomorphic copy of $S_{v_{j}}$ into $U$, and $S_{v_{j}}$ is a Kurepa subtree of $T$. 
This contradicts (\ref{Ass}).

Acknowledgement. \\ 
The research on the second author is partially supported by grants from NSERC(455916), SFS (7750027) and CNRS(UMR7586).

The authors would like to thank the anonymous referee for 
careful comments and pointing out important mistakes in the 
previous version of the article.

\def\Dbar{\leavevmode\lower.6ex\hbox to 0pt{\hskip-.23ex \accent"16\hss}D}

\end{document}